\documentclass[11pt,a4paper]{article}
\usepackage{ucs}
\usepackage[applemac]{inputenc}
\usepackage[T1]{fontenc}
\usepackage[english]{babel}		
\usepackage{
	amsmath,
	amsthm,
	amssymb,
	amsfonts,
	mathtools,
	framed,
	graphicx,
	fixme
}
\usepackage[dvips]{hyperref}
\hypersetup{	
	colorlinks,
	linkcolor=black,
	citecolor=black,	urlcolor=black,
	breaklinks=true
}
\DeclareMathAlphabet{\mathpzc}{OT1}{pzc}{m}{it}
\usepackage{geometry}
\geometry{includeheadfoot,bottom=1.5cm,top=1.5cm,right=2cm,left=3cm,twoside}
\setlength{\parindent}{0cm}
\setlength{\arraycolsep}{1pt}
\setlength{\headheight}{15pt}
\newtheoremstyle{lemma}{\topsep}{\topsep}
	{\itshape}
	{}
	{\bfseries}
	{.}
	{\newline}
	{\thmname{#1}\thmnumber{ #2}\thmnote{ #3}}	
\theoremstyle{lemma}
	\newtheorem{lemma}{Lemma}[section]
	\newtheorem{proposition}[lemma]{Proposition}
	
	\newtheorem{theorem}[lemma]{Theorem}

\newtheoremstyle{definition}{\topsep}{\topsep}
	{}
	{}
	{\bfseries}
	{.}
	{\newline}
	{\thmname{#1}\thmnumber{ #2}\thmnote{ #3}}	
\theoremstyle{definition}
	
	\newtheorem{remark}[lemma]{Remark}

\newcommand{\N}{\ensuremath{\mathbb{N}}}

\newcommand{\R}{\ensuremath{\mathbb{R}}}

\newcommand{\Sphere}{\ensuremath{\mathbb{S}}}
\newcommand{\HM}{\ensuremath{\mathcal{H}}}
\DeclarePairedDelimiter\abs{\lvert}{\rvert}
\DeclarePairedDelimiter\norm{\lVert}{\rVert}

\newcommand{\dd}{\ensuremath{\,\mathrm{d}}}	

\newcommand{\ds}{\ensuremath{\,\mathrm{d}s}}
\newcommand{\dy}{\ensuremath{\,\mathrm{d}y}}

\newcommand{\dx}{\ensuremath{\,\mathrm{d}x}}
\newcommand{\dt}{\ensuremath{\,\mathrm{d}t}}

\newcommand{\distH}{\ensuremath{d_{\mathcal{H}}}}
\DeclareMathOperator{\dist}{dist}
\DeclareMathOperator{\reach}{reach}
\DeclareMathOperator{\Tan}{Tan}
\DeclareMathOperator{\Nor}{Nor}
\DeclareMathOperator{\dual}{dual}
\DeclareMathOperator{\epi}{epi}

\DeclareMathOperator{\Unp}{Unp}
\DeclareMathOperator{\Hess}{Hess}
\DeclareMathOperator{\argmin}{argmin}
\DeclareMathOperator{\exterior}{ext}
\DeclareMathOperator{\interior}{int}
\DeclareMathOperator{\diam}{diam}

\renewcommand{\phi}{\varphi}
\renewcommand{\epsilon}{\varepsilon}

\hypersetup{
	pdftitle={On hypersurfaces of positive reach, alternating Steiner formul{\ae} and Hadwiger's Problem},
	pdfsubject={},
	pdfauthor={Sebastian Scholtes},
	pdfkeywords={},
	pdfcreator={},
	pdfproducer={}
}

\makeindex
\begin{document}

\title{On hypersurfaces of positive reach, alternating Steiner formul{\ae} and Hadwiger's Problem}
\author{\href{mailto:sebastian.scholtes@rwth-aachen.de}{Sebastian Scholtes}}
\date{\today}
\maketitle

\begin{abstract}
	We give new characterisations of sets of positive reach and show that a closed hypersurface has positive reach if and only if it is of class $C^{1,1}$.
	These results are then used to prove new alternating Steiner formul{\ae} for hypersurfaces of positive reach. Furthermore, it will turn out that every hypersurface that satisfies an
	alternating Steiner formula has positive reach. Finally, we provide a new solution to a problem by Hadwiger on convex sets and prove long time existence for the gradient flow
	of mean breadth.
\end{abstract}
\centerline{\small Mathematics Subject Classification (2000): 53A07, 52A20}

\section{Introduction}

In his seminal paper \cite{Federer1959a} Federer introduced the notion of sets of positive reach. Roughly speaking, the \emph{reach} of a closed set $A$ is the largest $s\geq 0$ such that all points 
whose distance to $A$ is smaller than $s$ possess a unique nearest point in $A$. Sets of positive reach share many of the properties that make convex sets so interesting and important,
but it is a much broader class. All closed convex sets as well as all closed $C^{2}$ submanifolds of $\R^{n}$ have positive reach in particular.
One of Federer's main results is a Steiner formula for sets of positive reach.
In the simplest case this means that for $A\subset \R^{n}$ closed and $0\leq s<\reach(A)$ the volume $V(A_{s})\vcentcolon=\HM^{n}(A_{s})$ of the parallel set is a polynomial of degree at most $n$. 
More precisely, there are real numbers $W_{k}(A)$, $k=0,\ldots, n$, such that
\begin{align}\label{steinerformulaintroduction}
	V(A_{s})=\sum_{k=0}^{n}{n \choose k}W_{k}(A)s^{k}
\end{align}
for $0\leq s< \reach(A)$ \cite[5.8 Theorem]{Federer1959a}. Here, the \emph{parallel set} of a non-empty set $A\subset\R^{n}$ is defined by
\begin{align*}
	A_{s}\vcentcolon=
		\begin{cases}
			\{x\in\R^{n}\mid \dist(x,A)\leq s\},&s\geq 0,\\
			\{x\in A\mid \dist(x,\partial A)\geq -s\},&s<0.
		\end{cases}
\end{align*}
In case of convex sets the $W_{k}$ are called \emph{querma{\ss}integrals} and in the more general context of sets with positive reach \emph{total curvatures} (although the total curvatures differ from the $W_{k}$
by a multiplicative constant depending on $n$ and $k$ and are usually numbered in reverse order).
These are important geometric quantities that characterise the sets involved. For example, for a non-empty compact set $A$ with positive reach we have $W_{0}(A)=\HM^{n}(A)$,
$W_{n}(A)=\chi(A)\HM^{n}(B_{1}(0))$ (see \cite[5.19 Theorem]{Federer1959a}); for $n\geq 2$ holds $W_{1}(A)=n^{-1}\mathcal{SM}(A)$ and
if additionally $A$ is convex and has non-empty interior we even have $W_{1}(A)=n^{-1}\HM^{n-1}(\partial A)$.\footnote{For an example of a compact set $A\subset \R^{2}$ of positive reach with 
$2^{-1}\HM^{1}(\partial A)<W_{1}(A)$ see \cite[Example 1]{Ambrosio2008a}.}
Here, $\chi(A)$ is the Euler-Poincar\'e characteristic of $A$ and $\mathcal{SM}(A)$ is the outer Minkowski content of $A$, for a definition see \cite{Ambrosio2008a}. 
In case of sets $A\subset \R^{n}$ of positive reach whose boundaries are of class $C^{1,1}$ 
the querma{\ss}integrals can also be written as mean curvature integrals, that is, as an integral over $\partial A$ of certain combinations of the classical principal curvatures that exist a.e.
(see Lemma \ref{quermassintegralsasmeancurvatureintegrals}); this is what the title of Federer's paper alludes to.\\

There are different characterisations of the reach of a set. For example, it can be defined as the largest $t$ such that two normals do not intersect in $A_{s}$ for all $s<t$ 
(see Lemma \ref{alternativecharacterisationreach} and for the definition of normals in this context \eqref{definitionnormalcone}).
In Theorem \ref{positivereachandsteinerformula} we give two new characterisations of sets of positive reach. The first tells us that a set has positive reach if and only if 
the set and its outer parallel sets satisfy an alternating Steiner formula. By \emph{alternating} we mean that the Steiner formula not only gives the volume of the outer parallel sets
(in our case $(A_{s})_{t}$ for $t\geq 0$), as in Federer's case, but the same polynomial also describes the volume of the inner parallel sets ($t<0$ is admissible). The second characterisation says
that a set has positive reach if and only if the parallel sets exhibit a semigroup-like structure.

\begin{theorem}[(Characterisation of sets of positive reach)]\label{positivereachandsteinerformula}
	Let $A\subset \R^{n}$ closed, $A\not\in \{\emptyset,\R^{n}\}$ and $r>0$. Then the following are equivalent
	\begin{itemize}
		\item
			for all $s\in (0,r)$ there are $W_{k}(A_{s})\in\R$ such that for $0<s+t<r$ holds
			\begin{align*}
				V((A_{s})_{t})=\sum_{k=0}^{n}{n\choose k}W_{k}(A_{s})t^{k},
			\end{align*}
		\item
			$(A_{s})_{t}=A_{s+t}$ for all for all $s\in (0,r)$ and $0<s+t<r$,
		\item
			$\reach(A)\geq r$.
	\end{itemize}
\end{theorem}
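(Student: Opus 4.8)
The plan is to prove the cycle (iii)$\Rightarrow$(i)$\Rightarrow$(ii)$\Rightarrow$(iii) among the three bulleted statements, obtaining the auxiliary implication (iii)$\Rightarrow$(ii) as a by-product (it is needed inside (iii)$\Rightarrow$(i)). Throughout I would write $d_A\vcentcolon=\dist(\cdot,A)$, so that $A_s=\{d_A\le s\}$ for $s\ge0$, and record two facts that are free for every closed set: since $\dist(\cdot,A_s)=\max\{d_A-s,0\}$ one has $(A_s)_t=A_{s+t}$ for all $t\ge0$, and $\partial A_s\subseteq\{d_A=s\}$ with $d_A\equiv s$ on $\partial A_s$. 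Thus all of the content concerns the inner parallel sets $(A_s)_{-u}$, $0<u<s$. For (iii)$\Rightarrow$(ii) I would use that $\reach(A)\ge r$ forces $d_A$ to be of class $C^1$ with $|\nabla d_A|\equiv1$ on the shell $\{0<d_A<r\}$ \cite{Federer1959a} (cf.\ Lemma~\ref{alternativecharacterisationreach}); then $\partial A_s=\{d_A=s\}$ and, by the ray construction through the foot point together with the triangle inequality, $\dist(x,\partial A_s)=s-d_A(x)$ whenever $d_A(x)<s<r$, so $(A_s)_{-u}=\{d_A\le s-u\}=A_{s-u}$, which with the free outer half is (ii). For (iii)$\Rightarrow$(i) I would apply Federer's Steiner formula \eqref{steinerformulaintroduction} to $A$ itself to get $V(A_u)=\sum_k\binom{n}{k}W_k(A)u^k$ for $0<u<r$ and then, using $(A_s)_t=A_{s+t}$ just obtained and expanding $(s+t)^k$ binomially in $t$, obtain for $0<s+t<r$
\[
V((A_s)_t)=\sum_{k=0}^{n}\binom{n}{k}W_k(A)(s+t)^k=\sum_{j=0}^{n}\binom{n}{j}W_j(A_s)\,t^{j}
\]
with $\binom{n}{j}W_j(A_s)\vcentcolon=\sum_{k=j}^{n}\binom{n}{k}\binom{k}{j}W_k(A)s^{k-j}$; this is (i).

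The interesting direction is (i)$\Rightarrow$(ii). I would first observe that, for fixed $s\in(0,r)$, the polynomial $P_s(t)=\sum_k\binom{n}{k}W_k(A_s)t^k$ agrees with $t\mapsto V(A_{s+t})$ for $t\ge0$ by the free outer half, so patching over $s$ there is one polynomial $Q$ with $V(A_u)=Q(u)$ on $(0,r)$ and $P_s(t)=Q(s+t)$; hence $V((A_s)_{-u})=Q(s-u)=V(A_{s-u})$ for $0<u<s$. Since $A_{s-u}\subseteq(A_s)_{-u}$ always (if $d_A(x)\le s-u$ then $|x-w|\ge d_A(w)-d_A(x)=u$ for every $w\in\partial A_s$), the difference $(A_s)_{-u}\setminus A_{s-u}$ is Lebesgue-null. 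The hard part will be upgrading this to $(A_s)_{-u}=A_{s-u}$: a point $x$ of the difference has $d_A(x)\in(s-u,s]$ and $\dist(x,\partial A_s)\ge u$, and if both $d_A(x)<s$ and $\dist(x,\partial A_s)>u$ were strict a whole ball around $x$ would lie in the difference, contradicting nullity; so either $\dist(x,\partial A_s)=u$, which is excluded by shrinking $u$ a little, or $d_A(x)=s$, which is excluded by passing to a base $s'\in(s,r)$, where $d_A(x)<s'$ and either $\dist(x,\partial A_{s'})>s'-s$ (back to the previous contradiction) or $\dist(x,\partial A_{s'})=s'-s$ for all such $s'$, and the latter produces points outside $A_s$ converging to $x$, whence $x\in\partial A_s$, contradicting $\dist(x,\partial A_s)\ge u>0$. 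This yields (ii).

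For (ii)$\Rightarrow$(iii), the geometric core, I would rewrite $(A_s)_{-u}=\{x\in A_s:\dist(x,\partial A_s)\ge u\}=\{d_A\le s-u\}$ and let $u$ range over $(0,s)$ to get the identity $d_A+\dist(\cdot,\partial A_s)\equiv s$ on the open shell $\{0<d_A<s\}$. Then, given $y$ with $d\vcentcolon=d_A(y)\in(0,r)$, I would pick $s\in(d,r)$, so $\dist(y,\partial A_s)=s-d>0$, choose foot points $b\in A$ and $z\in\partial A_s$ of $y$, and compare one-sided difference quotients: for every direction $v$, expanding norms gives $\limsup_{t\downarrow0}t^{-1}(d_A(y+tv)-d)\le\langle v,(y-b)/d\rangle$ and $\limsup_{t\downarrow0}t^{-1}(\dist(y+tv,\partial A_s)-(s-d))\le\langle v,(y-z)/(s-d)\rangle$; since the two distances sum to $s$ near $y$, adding these and using $\pm v$ forces $(y-z)/(s-d)=-(y-b)/d$ and then that $t\mapsto d_A(y+tv)$ has two-sided directional derivative $\langle v,(y-b)/d\rangle$ for every $v$. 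As this must hold for every foot point $b$ of $y$, there is only one, so $y$ has a unique nearest point in $A$; since $y$ was an arbitrary point of $\{0<d_A<r\}$, $\reach(A)\ge r$.

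The main obstacle I anticipate is precisely the measure-zero-to-empty upgrade in (i)$\Rightarrow$(ii): equality of the volumes of $(A_s)_{-u}$ and $A_{s-u}$ is cheap, but turning it into equality of sets requires locating the possible extra points on the null set $\{d_A=s\}\cup\{\dist(\cdot,\partial A_s)=u\}$ and then a separate topological argument to exclude even those. The recurring nuisance behind this — and behind the failure of (i) and (ii) once $\reach(A)<r$ — is that $\partial A_s$ can be a proper subset of the level set $\{d_A=s\}$, so one must continually distinguish the topological boundary of a parallel set from the corresponding distance level set.
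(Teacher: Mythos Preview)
Your argument is correct and follows a route genuinely different from the paper's in two of the three nontrivial implications. The paper organises everything around a single geometric lemma (Lemma~\ref{differencesetcontainsinnerpoints}): if $\reach(A)<r$, one takes a point with two nearest points in $A$ and, by an explicit ball construction, exhibits an \emph{interior} point of $(A_{\sigma+\tau})_{-\tau}\setminus A_{\sigma}$ for suitable $\sigma,\tau$. This lemma then drives both (i)$\Rightarrow$(iii) (volume equality would be violated) and (ii)$\Rightarrow$(iii) (the sets would differ) by contrapositive; the paper's (iii)$\Rightarrow$(i) applies Federer's Steiner formula to each $A_{s}$ separately (using $\reach(A_{s})\ge r-s$ from \cite[4.9 Corollary]{Federer1959a}) and then extends the admissible range of $t$ in two passes. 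Your (iii)$\Rightarrow$(i), by contrast, is cleaner: apply Federer once to $A$ and read off the coefficients for $A_{s}$ via the semigroup identity you already established. Your (i)$\Rightarrow$(ii) reaches the same ``interior point in the difference'' obstruction from the opposite side: starting from a hypothetical bad point, continuity forces it either to carry a full ball (contradicting the measure-zero difference coming from the polynomial identity) or to sit on a thin stratum that you eliminate by perturbing $u$ and then $s$; this is a soft topological argument in place of the paper's explicit two-foot-point geometry. Your (ii)$\Rightarrow$(iii) via the identity $d_{A}+\dist(\cdot,\partial A_{s})\equiv s$ and the resulting two-sided directional differentiability of $d_{A}$ is new relative to the paper and attractive in its own right; the paper simply reuses the contrapositive of Lemma~\ref{differencesetcontainsinnerpoints}. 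The trade-off: the paper's single geometric lemma is reusable (it also feeds Theorem~\ref{theoremsteinerformulaiffpoasitivereach}) and gives the obstruction explicitly, while your approach is more self-contained and avoids the case split $x\in A_{t_{0}}^{\circ}$ versus $x\in\partial A_{t_{0}}$. One small point worth making explicit in your write-up: in (ii)$\Rightarrow$(iii) you need $\partial A_{s}\neq\emptyset$ to choose $z$; this holds because (ii) forces $A_{s}\neq\R^{n}$ for every $s\in(0,r)$ (else $A_{s-u}=\R^{n}$ for all $u\in(0,s)$ and hence $A=\R^{n}$).
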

By means of the example $A\vcentcolon=[-b,b]^{2}\backslash [-a,a]^{2}$ for $0<a<b$, where
\begin{align*}
	V(A_{s})=4(b^{2}-a^{2})+8(b+a)s+(\pi-4)s^{2}\quad\text{for }0\leq s\leq a,
\end{align*}
see Figure \ref{boxannulus}, we find that it is essential to have the Steiner formula for the outer parallel sets, in order to characterise sets of positive reach.\\

\begin{figure}\label{boxannulus}
	\centering 
	\includegraphics[width=0.5\textwidth]{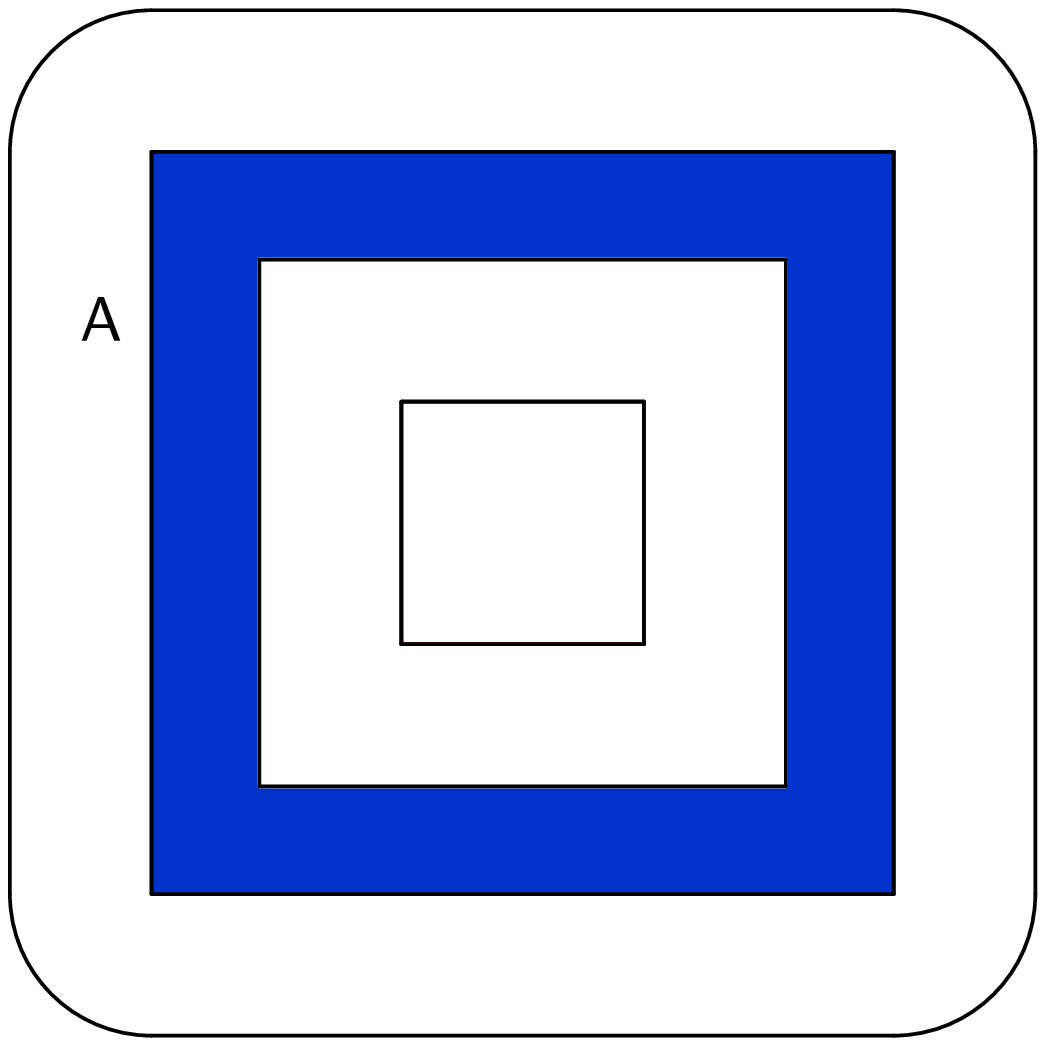}
	\caption{The set $A\vcentcolon=[-b,b]^{2}\backslash [-a,a]^{2}$ with outer parallel set.}
\end{figure}

As we have seen before, a set of positive reach possesses a Steiner formula \eqref{steinerformulaintroduction} for $0\leq s<\reach(A)$. Now, it is an obvious question to ask wether or not this formula can also
be extended to the inside of the set, i.e. if there is $u<0$ such that \eqref{steinerformulaintroduction} also holds for $u<s<\reach(A)$. Disappointingly, the answer is, in general and even for convex bodies: No! 
This can easily be seen by $A\vcentcolon= [-1,1]^{2}$, because $V(A_{s})=4+8s+\pi s^{2}$ for $s\geq 0$ but $V(A_{s})=(2+2s)^{2}=4+8s+4s^{2}$ for $s\in (-1,0)$, or by the example of the semi-circle,
where the formula for the volume of the inner parallel bodies is not even a polynomial (see \cite[Example 2]{Kocak2012a}). In \cite{Hernandez-Cifre2010a} a conjecture by 
Matheron, that the volume of the inner parallel bodies of a convex set is bounded below by the Steiner polynomial, is disproven and conditions for different bounds on the volume of the inner parallel bodies
are given. This line of research was continued in \cite{Hernandez-Cifre2010c}. Furthermore \cite{Kocak2012a} showed that the volume of the inner parallel bodies of a polytope in $\R^{n}$ is, what the authors called,
a \emph{degree $n$ pluriphase Steiner-like function}, which basically allows the querma{\ss}integrals to change their values at a finite number of points. In Theorem \ref{theoremsteinerformulaiffpoasitivereach}
we characterise closed sets whose inner and outer parallel sets posses an alternating Steiner formula as those sets of this class whose boundaries have positive reach.
\begin{theorem}[(Alternating Steiner formula and reach of the boundary)]\label{theoremsteinerformulaiffpoasitivereach}
	Let $A\subset \R^{n}$ be closed and bounded by a closed hypersurface, $r>0$. Then the following are equivalent
	\begin{itemize}
		\item
			for all $s\in (-r,r)$ there are $W_{k}(A_{s})\in\R$ such that for $-r<s+t<r$ holds
			\begin{align*}
				V((A_{s})_{t})=\sum_{k=0}^{n}{n\choose k}W_{k}(A_{s})t^{k},
			\end{align*}
		\item
			$(A_{s})_{t}=A_{s+t}$ for all for all $s\in (-r,r)$ and $-r<s+t<r$,
		\item
			$\reach(\partial A)\geq r$,
		\item
			$\partial A$ is a closed $C^{1,1}$ hypersurface with $\reach(\partial A)\geq r$.
	\end{itemize}
\end{theorem}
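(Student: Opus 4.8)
The plan is to establish $(4)\Leftrightarrow(3)$, $(3)\Rightarrow(2)$, $(3)\Rightarrow(1)$, $(2)\Rightarrow(3)$ and $(1)\Rightarrow(3)$, so that all four statements become linked through $(3)$; throughout one uses that $A$ is compact with $A=\overline{\interior A}$, that $\partial A\neq\emptyset$ and $\partial A\neq\R^{n}$, so that Theorem \ref{positivereachandsteinerformula} may be applied to $A$, to $\partial A$, and to the auxiliary compact sets below. The equivalence $(4)\Leftrightarrow(3)$ is immediate from the announced fact that a closed hypersurface has positive reach if and only if it is of class $C^{1,1}$. Assume $(3)$, so that $(4)$ holds as well and $\partial A$ is a closed $C^{1,1}$ hypersurface. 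Since a point at distance less than $r$ from $A$ (resp.\ from $\overline{\complement A}$) has its nearest point either equal to itself or lying on $\partial A$, both $A$ and $\overline{\complement A}$ have reach at least $r$; standard facts about distance functions of sets of positive reach then show that the signed distance function $\delta$ to $\partial A$ (negative on $\interior A$, positive outside) is of class $C^{1,1}$ on the open tube $\{|\delta|<r\}$, with $|\nabla\delta|\equiv1$ there, and that $A_{s}=\{\delta\leq s\}$, $\partial A_{s}=\{\delta=s\}$ for $s\in(-r,r)$. Because $\delta$ solves the eikonal equation, its level sets satisfy $\dist(x,\{\delta=u\})=|\delta(x)-u|$ inside the tube whenever $u\in(-r,r)$, and one reads off $(A_{s})_{t}=\{\delta\leq s+t\}=A_{s+t}$ for all $s\in(-r,r)$ with $-r<s+t<r$; this is $(2)$. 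For $(1)$, the $C^{1,1}$ normal map $x\mapsto x+s\,\nu(x)$ is a bijection of $\partial A$ onto $\partial A_{s}$ with Jacobian $\prod_{i=1}^{n-1}\bigl(1+s\,\kappa_{i}(x)\bigr)>0$ (the principal curvatures $\kappa_{i}$ of $\partial A$, which exist $\HM^{n-1}$-almost everywhere, satisfy $|\kappa_{i}|\leq 1/r$), so the area formula gives the tube formula $V(A_{s})=V(A)+\int_{\partial A}\int_{0}^{s}\prod_{i=1}^{n-1}\bigl(1+\tau\kappa_{i}(x)\bigr)\,\dd\tau\,\dd\HM^{n-1}(x)$ for all $s\in(-r,r)$, which is a polynomial in $s$ of degree at most $n$ (see Lemma \ref{quermassintegralsasmeancurvatureintegrals}); combined with $(A_{s})_{t}=A_{s+t}$ this yields $V\bigl((A_{s})_{t}\bigr)=V(A_{s+t})$, a polynomial in $t$ of degree at most $n$ on $\{-r<s+t<r\}$, whose coefficients, written as $\binom{n}{k}W_{k}(A_{s})$, give $(1)$.

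For $(2)\Rightarrow(3)$ I transfer the semigroup property from $A$ to $\partial A$ and then invoke Theorem \ref{positivereachandsteinerformula} for the compact set $\partial A$. Writing $S_{\sigma}\vcentcolon=(\partial A)_{\sigma}=\{x:\dist(x,\partial A)\leq\sigma\}$, one always has $(S_{\sigma})_{\tau}=S_{\sigma+\tau}$ for $\tau\geq0$, so only the inner case $\tau<0$ carries content. The key point is that $(2)$, through the inclusions $(A_{\sigma})_{-u}\subseteq A_{\sigma-u}$ and $(A_{-\sigma})_{v}\subseteq A_{v-\sigma}$ (the nontrivial halves of the corresponding equalities), upgrades the automatic Lipschitz bounds $\dist(x,\partial A_{\sigma})\geq\sigma-\dist(x,A)$ and $\dist(x,\partial A_{-\sigma})\geq\sigma-\dist(x,\partial A)$ to equalities, so that $\dist\bigl(x,\{\dist(\cdot,\partial A)=\sigma\}\bigr)=\bigl|\dist(x,\partial A)-\sigma\bigr|$; inserting this into the definition of the inner parallel set gives $(S_{\sigma})_{\tau}=S_{\sigma+\tau}$ for $\sigma\in(0,r)$ and $0<\sigma+\tau<r$. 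Thus $\partial A$ satisfies the second condition of Theorem \ref{positivereachandsteinerformula}, and hence $\reach(\partial A)\geq r$.

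For $(1)\Rightarrow(3)$ I use $(1)$ twice. Restricting the identity in $(1)$ to $s\in(0,r)$ and $0<s+t<r$ is precisely the first condition of Theorem \ref{positivereachandsteinerformula} for $A$, so $\reach(A)\geq r$. For the complementary estimate I localise: fix $R$ with $A\subset B_{R/2}(0)$ and set $C_{R}\vcentcolon=\overline{B_{R}(0)}\setminus\interior A$, a compact set different from $\emptyset$ and $\R^{n}$. For $s\in(0,r)$, with $r$ small relative to $R$, one has $(C_{R})_{s}=\overline{B_{R+s}(0)}\setminus\interior\bigl(A_{-s}\bigr)$ and $\bigl((C_{R})_{s}\bigr)_{t}=\overline{B_{R+s+t}(0)}\setminus\interior\bigl((A_{-s})_{-t}\bigr)$ for the relevant $t$, hence $V\bigl(((C_{R})_{s})_{t}\bigr)=\HM^{n}(B_{1}(0))(R+s+t)^{n}-V\bigl((A_{-s})_{-t}\bigr)$; by $(1)$ at the parameter $-s$ the last term is a polynomial in $t$ of degree at most $n$, so the whole right-hand side is, and $C_{R}$ satisfies the first condition of Theorem \ref{positivereachandsteinerformula}, giving $\reach(C_{R})\geq r$. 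Since $C_{R}$ agrees with $\overline{\complement A}$ in a neighbourhood of $\partial A$ and $\overline{\complement A}$ has locally infinite reach away from $\partial A$, this gives $\reach(\overline{\complement A})\geq r$, and finally $\reach(\partial A)=\min\{\reach(A),\reach(\overline{\complement A})\}\geq r$ by the observation already used above.

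The main obstacle I anticipate lies in the careful treatment of inner parallel sets in the implications $(2)\Rightarrow(3)$ and $(1)\Rightarrow(3)$: one must justify the set identities for $\partial A_{-\sigma}$, for $(C_{R})_{s}$ and for their inner parallels without presupposing positive reach, and one must verify that the level sets $\{\dist(\cdot,\partial A)=\sigma\}$, as well as $\partial A_{-s}$ and $\partial\bigl((A_{-s})_{-t}\bigr)$, are $\HM^{n}$-null, so that replacing a parallel set by its interior does not alter the volume. This is exactly the kind of delicate, purely metric analysis of the distance function that underpins Theorem \ref{positivereachandsteinerformula}; once it is in place, everything else reduces to the polynomial bookkeeping and citations indicated above.
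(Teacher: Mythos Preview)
Your forward implications $(3)\Rightarrow(2)$ and $(3)\Rightarrow(1)$ are fine and in fact cleaner than the paper's: once you know $\partial A$ is $C^{1,1}$ with $\reach(\partial A)\geq r$, the signed-distance argument gives $(A_{s})_{t}=A_{s+t}$ in one stroke, and the tube formula together with $(2)$ yields the polynomial for $V((A_{s})_{t})$ on the full two-sided range without the two-part bootstrap the paper carries out with Federer's Steiner formula.

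The backward implications $(1)\Rightarrow(3)$ and $(2)\Rightarrow(3)$ are where your plan and the paper diverge, and your route has a genuine gap. In the $C_{R}$ argument you assert, for $t\in(-s,0)$,
\[
\bigl((C_{R})_{s}\bigr)_{t}=\overline{B_{R+s+t}(0)}\setminus\interior\bigl((A_{-s})_{-t}\bigr).
\]
But $(C_{R})_{s}=\overline{B_{R+s}(0)}\setminus\interior(A_{-s})$, so its inner parallel set is governed by the distance to $\partial\bigl(\interior(A_{-s})\bigr)$, \emph{not} to $A_{-s}$ itself. These coincide only when $A_{-s}=\overline{\interior(A_{-s})}$, i.e.\ when $A_{-s}$ has no lower-dimensional ``whiskers''. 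Nothing in hypothesis $(1)$ alone rules such whiskers out (think of a dumbbell whose neck has width exactly $2s$: then $A_{-s}$ is two balls joined by a segment). In that situation $\bigl((C_{R})_{s}\bigr)_{t}$ equals $\overline{B_{R+s+t}(0)}\setminus\bigl(\overline{\interior(A_{-s})}\bigr)_{|t|}$, whose volume is \emph{strictly larger} than your formula predicts, and the link to the polynomial $V((A_{-s})_{-t})$ from $(1)$ is lost. The same obstruction appears in your transfer of the semigroup property to $\partial A$ in $(2)\Rightarrow(3)$: the boundary $\partial S_{\sigma}$ need not equal the full level set $\{\dist(\cdot,\partial A)=\sigma\}$, so the claimed distance identity fails in general.

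The paper sidesteps this entirely. Instead of reducing to Theorem~\ref{positivereachandsteinerformula}, it argues by contradiction: if $\reach(A)<r$ (respectively $\reach(\overline{\R^{n}\setminus A})<r$), a short geometric argument (Lemmas~\ref{differencesetcontainsinnerpoints} and \ref{differencesetcontainsinnerpoints2}) produces $\sigma,\tau$ for which $(A_{\sigma+\tau})_{-\tau}\setminus A_{\sigma}$ (respectively $A_{-\sigma}\setminus(A_{-(\sigma+\tau)})_{\tau}$) contains an inner point. This directly contradicts the volume equality $V((A_{s})_{t})=V(A_{s+t})$, which the paper first extracts from $(1)$ by the same binomial manipulation you use. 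That contradiction argument never needs to identify an inner parallel set explicitly, and in particular never needs $A_{-s}$ to be regular closed; this is precisely the ``delicate, purely metric analysis'' you flagged, but it is not the analysis underlying Theorem~\ref{positivereachandsteinerformula} repackaged---it is a separate, complementary lemma for the inside.
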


To prove this theorem we need a characterization of closed hypersurfaces of positive reach. 
By a \emph{closed hypersurface} in $\R^{n}$ we mean a \emph{topological sphere}, that is, the homeomorphic image of $\Sphere^{n-1}$.

\begin{theorem}[(Closed hypersurfaces have positive reach iff $C^{1,1}$)]\label{equivalencepositivereachC11}
	Let $A$ be a closed hypersurface in $\R^{n}$. Then $A$ has positive reach if and only if $A$ is a $C^{1,1}$ manifold.
\end{theorem}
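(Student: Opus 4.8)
The plan is to prove both implications separately, using the fact that a closed hypersurface is a compact topological $(n-1)$-manifold that separates $\R^n$ into a bounded "inside" and an unbounded "outside".

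For the easy direction, suppose $A$ is a closed $C^{1,1}$ hypersurface. Then $A$ is a compact $C^{1,1}$ submanifold of $\R^n$, and it is classical (this follows from the Lipschitz character of the Gauss map, or equivalently from a uniform interior/exterior ball condition that a compact $C^{1,1}$ hypersurface satisfies) that $\reach(A)>0$. I would either cite this or give a short argument: cover $A$ by finitely many $C^{1,1}$ graph patches, on each patch the nearest-point projection is well-defined in a tube of fixed radius by the Lipschitz bound on the derivative, and compactness upgrades this to a uniform radius; one also needs to rule out distinct patches coming close, which again follows from compactness.

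For the hard direction, assume $\reach(A)\geq r>0$; I must show $A$ is $C^{1,1}$. Because $A$ has positive reach, Federer's theory applies: through every point $a\in A$ there is a well-defined normal cone $\Nor(A,a)$ and tangent cone $\Tan(A,a)$, and the nearest-point projection $\xi_A$ is defined and Lipschitz on the open $r$-neighbourhood $\{x : \dist(x,A)<r\}$. The key geometric input is that $A$, being a topological $(n-1)$-sphere, is the common boundary of its inside $\Omega$ and outside; I would use the alternative characterisation of reach (the one cited in the excerpt as Lemma~\ref{alternativecharacterisationreach}, that two normals do not meet inside $A_s$) together with the separation property to show that at each $a\in A$ the normal cone $\Nor(A,a)$ is a full line — spanned by a unit vector $\nu(a)$ — rather than a lower-dimensional cone or a half-line: indeed, one can approach $a$ along the normal ray from both the inside and the outside component within distance $<r$, which forces a two-sided normal, hence $\dim\Tan(A,a)=n-1$, so $A$ is a $C^1$ hypersurface with unit normal field $\nu$. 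Then positive reach gives the quantitative estimate: for all $a,b\in A$ one has $\langle b-a,\nu(a)\rangle \leq \frac{1}{2r}\abs{b-a}^2$ (and symmetrically with the other sign from the inside ball), the standard "ball rolling freely on both sides" inequality that is equivalent to $\reach(A)\geq r$. From these two-sided quadratic bounds I would deduce that $\nu$ is Lipschitz: $\abs{\langle \nu(a)-\nu(b), v\rangle}$ for $v$ tangent is controlled by $\abs{a-b}$, and the normal components are second order, so $\abs{\nu(a)-\nu(b)}\leq C\abs{a-b}$ on a neighbourhood, with $C$ depending on $r$. A Lipschitz unit normal field on a $C^1$ hypersurface is exactly the statement that $A$ is $C^{1,1}$ (locally $A$ is the graph of a function whose gradient is Lipschitz).

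The main obstacle I expect is the $C^1$ regularity step — showing the normal cone is a line at every point, with no "corners" or "edges." Positive reach alone does not force this for a general set (a closed half-space has infinite reach but its boundary, while here smooth, shows the need to use both sides; a solid cube has positive reach but its boundary has edges). What rescues us is precisely the hypothesis that $A$ is a closed hypersurface, i.e. a topological $(n-1)$-manifold without boundary bounding a region: this is what guarantees that from any $a\in A$ there are points of the complement within distance $<r$ on \emph{both} sides whose nearest point is $a$, pinning down a one-dimensional normal and an $(n-1)$-dimensional tangent. I would make this rigorous using the structure of $\Unp(A)$ and the fact (Federer) that for $x$ with $\dist(x,A)<r$ the segment from $x$ to $\xi_A(x)$ meets $A$ only at its endpoint, combined with invariance-of-domain / the Jordan–Brouwer separation theorem to produce complementary points on either side of the tangent hyperplane at $a$. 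Once $C^1$ is in hand, the Lipschitz estimate on $\nu$ is a routine consequence of the supporting-ball inequalities and I would not belabour it.
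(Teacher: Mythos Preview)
Your proposal is correct and structurally parallels the paper's argument: both directions proceed along the same lines (graph patches plus compactness for $C^{1,1}\Rightarrow$ positive reach; two-sided normal line $\Rightarrow$ $C^{1}$ $\Rightarrow$ $C^{1,1}$ for the converse), and you correctly single out the step ``normal cone is a full line'' as the place where the topological-sphere hypothesis is essential.

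The genuine divergence is in how the $C^{1,1}$ upgrade is obtained once $C^{1}$ is in hand. You argue geometrically: the two-sided rolling-ball inequality $|\langle b-a,\nu(a)\rangle|\le \tfrac{1}{2r}|b-a|^{2}$ yields a Lipschitz Gauss map, hence $C^{1,1}$ graphs. The paper instead passes to a local graph $f$ and, using Federer's estimate $\dist(b-a,\Tan_{a}A)\le \tfrac{1}{2r}|b-a|^{2}$, bounds the \emph{second difference} $|f(x-h)-2f(x)+f(x+h)|\le C|h|^{2}$; it then invokes a separate characterisation lemma (Proposition~\ref{characterisationofh\"olderfunctions}) stating that such a second-difference bound is equivalent to $f\in C^{1,1}_{\mathrm{loc}}$. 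Your route is shorter and more self-contained for this particular theorem; the paper's route isolates a reusable analytic criterion (the second-difference characterisation of $C^{1,\alpha}$) that the author evidently wanted to record for its own sake. A minor point: your passage from ``$\dim\Tan_{a}A=n-1$'' to ``$A$ is a $C^{1}$ hypersurface'' is terse; the paper fills this in by first proving continuity of $\nu$ and then showing $A$ is locally a graph whose differentiability is read off from the one-point subdifferential---you should expect to supply a comparable argument rather than leave it as a one-liner.
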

This result was already featured in \cite[\S4 Theorem 1]{Lucas1957a}, a reference that is not easily accessible and which does not seem to be widely known.
Clearly, the result was  stated in a slightly different form, as Federer had not coined the term reach yet and is also proven by different methods. 
In resources more readily available, we find the direction $\reach(A)>0$ implies $C^{1,1}$
in \cite[Proposition 1.4]{Lytchak2005a} and \cite[Theorem 1.2]{Howard2010a}. The other direction can, other than \cite[\S4 Theorem 1]{Lucas1957a}, only be found as a remark without proof, for example 
in \cite[below 2.1 Definitions]{Fu1989a} or \cite[under Theorem 1.1]{Lytchak2004b}. Another hint to this result may be found in \cite[4.20 Remark]{Federer1959a}.
Considering that Theorem \ref{equivalencepositivereachC11} is mostly folklore and a uniform proof of both directions
together is not available it seems to be worth to give a detailed proof of this result. 
To show this, we use a characterisation of $C^{1,\alpha}_{\mathrm{loc}}$ functions, Proposition \ref{characterisationofh\"olderfunctions}, 
which states that a function is of class $C^{1,\alpha}_{\mathrm{loc}}$ if and only if
\begin{align*}
	\abs{f(x-h)-2f(x)+f(x+h)}\leq C \abs{h}^{1+\alpha}.
\end{align*}
One direction of this characterisation is mostly taken from \cite[Lemma 2.1]{Calabi1970a}, but since we suspect that it might be useful in other contexts, too, it deserves an elaborate proof.

To some extent Theorem \ref{equivalencepositivereachC11} can be thought of as a generalization of
\cite[Lemma 4]{Cantarella2002a}, \cite[Lemma 2]{Gonzalez2002b}, \cite[Theorem 1 (iii)]{Schuricht2003a} and \cite[Theorems 5.1 and 5.2]{Strzelecki2006a} to higher dimension 
(although the codimension is not restricted to one).
There, different notions of thickness, specific to either curves or surfaces, were investigated and sets of positive thickness were
characterized. These notions of thickness are equal to the reach of the curves and surfaces under consideration.\\

The problem of characterising convex sets whose querma{\ss}integrals are differentiable, is known as \emph{Hadwiger's problem} \cite{Hadwiger1955a}. 
To be more precise, denote by $\mathcal{K}^{n}$ the class of non-empty compact convex sets in $\R^{n}$ and  by $\mathcal{R}_{p}(r)$, for $r\geq 0$ and 
$0\leq p\leq n-1$, the class of all $K\in\mathcal{K}^{n}$ such that $\phi_{i}: (-r,\infty)\to \R,\, s\mapsto W_{i}(K_{s})$ for $i=0,\ldots, p$ are differentiable with
$W_{i}'(s)=(n-i)W_{i+1}(s)$, where we abbreviate $W_{i}(s)=W_{i}(K_{s})$. 
In \cite[Theorem 1.1]{Hernandez-Cifre2010b} the class $\mathcal{R}_{n-1}$ of convex sets $K$ whose querma{\ss}integrals are differentiable on $(-r(K),\infty)$, where $r(K)$ is the \emph{inradius}, 
is identified as the set of outer parallel bodies of lower dimensional convex sets, i.e. 
\begin{align}\label{Rn-1characterisationouterparallelbodies}
	\mathcal{R}_{n-1}=\{L_{s}\mid L\in\mathcal{K}^{n}, \dim(L)\leq n-1, s\geq 0\},
\end{align}
and \cite{Hernandez-Cifre2011a} gives a characterisation of $\mathcal{R}_{n-2}$ of a more complicated nature.\footnote{Actually, these charaterisations were done in a more general setting, which
not only considers parallel sets, which are Minkowski sums with balls, but also allows for Minkowski sums with a certain class of convex sets.}
Using our results of the present paper we can give the following new characterisation of the class $\mathcal{R}_{n-1}(r)$.
\begin{theorem}[(Characterisation of $\mathcal{R}_{n-1}(r)$)]\label{bigequivalence}
	Let $K\in \mathcal{K}^{n}$, $r>0$.
	Then the following are equivalent
	\begin{itemize}
		\item
			$K\in \mathcal{R}_{n-1}(r)$,
		\item
			there is a convex $L$ with $K=L_{r}$,
		\item
			$K=(K_{-r})_{r}$,
		\item
			$\reach(\partial K)\geq r$,
		\item
			$\partial K$ is a closed $C^{1,1}$ hypersurface with $\reach(\partial K)\geq r$.
	\end{itemize}
\end{theorem}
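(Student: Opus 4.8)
The plan is to reduce everything to Theorem~\ref{theoremsteinerformulaiffpoasitivereach} applied to $A=K$ (legitimate since a convex body is bounded by a topological sphere $\partial K$), together with a few standard facts about parallel bodies of convex sets. After disposing of the case $\dim K<n$ --- where (a), (b), (c), (e) all fail and the statement is to be read with (d) failing as well --- we assume $K$ has non-empty interior. The equivalence of the last two items, (d)$\Leftrightarrow$(e), is then exactly the equivalence of the third and fourth conditions in Theorem~\ref{theoremsteinerformulaiffpoasitivereach}. I would also record once, for a compact convex $L$ and $0\le\beta\le\alpha$, the elementary identities $L_\alpha\oplus\bar B_\beta=L_{\alpha+\beta}$, $(L\oplus\bar B_\alpha)_{-\beta}=L\oplus\bar B_{\alpha-\beta}$, and in particular $(L_\alpha)_{-\alpha}=L$, all obtained via separating hyperplanes; here $\bar B_\rho$ denotes the closed ball of radius $\rho$ about $0$ and $V=\HM^{n}$.

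Granting these, (b)$\Leftrightarrow$(c) is immediate: from (c), $L\vcentcolon= K_{-r}$ is compact convex with $K=L_r$; conversely $K=L_r$ gives $K_{-r}=(L_r)_{-r}=L$, hence $(K_{-r})_r=L_r=K$. For (c)$\Leftrightarrow$(d): if $K=(K_{-r})_r=:L_r$ with $L=K_{-r}$, then $K_s=L_{r+s}$ and $(K_s)_t=L_{r+s+t}=K_{s+t}$ whenever $s\in(-r,r)$ and $-r<s+t<r$, so the semigroup condition of Theorem~\ref{theoremsteinerformulaiffpoasitivereach} holds and therefore $\reach(\partial K)\ge r$; conversely $\reach(\partial K)\ge r$ gives, by that theorem, $(K_{-\rho})_\rho=K$ for every $\rho\in(0,r)$, so $K_{-\rho}\ne\emptyset$, the inradius of $K$ is $\ge r$, $K_{-r}\ne\emptyset$, and $K_{-\rho}\to K_{-r}$ in the Hausdorff metric as $\rho\to r^-$ (monotone compact convex sets with non-empty intersection); hence $K=(K_{-\rho})_\rho=K_{-\rho}\oplus\bar B_\rho\to K_{-r}\oplus\bar B_r=(K_{-r})_r$, i.e.\ (c).

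It remains to bring in (a). For (d)$\Rightarrow$(a): by the above, $K=(K_{-r})_r=L_r$ with $L$ convex, so $K_s=L_{r+s}$ for all $s>-r$ and $\phi_i(s)\vcentcolon= W_i(K_s)=W_i(L_{r+s})$; expanding the Steiner formula of $L$ yields $W_i(L_u)=\sum_{k\ge i}\binom{n-i}{k-i}W_k(L)u^{k-i}$, a polynomial in $u=r+s>0$, so each $\phi_i$ is polynomial on $(-r,\infty)$, and differentiating, using $(k-i)\binom{n-i}{k-i}=(n-i)\binom{n-i-1}{k-i-1}$, gives $\phi_i'=(n-i)\phi_{i+1}$ for $0\le i\le n-1$, i.e.\ $K\in\mathcal R_{n-1}(r)$. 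The converse (a)$\Rightarrow$(d) is the crux. Let $\rho$ be the inradius of $K$. If $\rho<r$, then $K_s=\emptyset$ for $s\in(-r,-\rho)$ while $K_{-\rho}$ (the incentre set) is non-empty; continuity of $\phi_{n-1}$ at $-\rho$ forces $\phi_{n-1}(-\rho)=W_{n-1}(K_{-\rho})=0$, hence $\phi_{n-1}\equiv0$ on $(-r,-\rho]$ and $\phi_{n-1}'(-\rho)=0$, yet $\phi_{n-1}'(-\rho)=\phi_n(-\rho)=W_n(K_{-\rho})=\chi(K_{-\rho})\,\HM^{n}(B_1(0))\ne0$ by \cite[5.19 Theorem]{Federer1959a} --- a contradiction. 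So $\rho\ge r$, $K_s\ne\emptyset$ for all $s\in(-r,\infty)$, $\phi_n\equiv\HM^{n}(B_1(0))$, and iterating $\phi_i'=(n-i)\phi_{i+1}$ shows $\phi_0$ is a polynomial $\sum_{j=0}^{n}a_js^j$ on $(-r,\infty)$ with $\phi_k=\tfrac{(n-k)!}{n!}\phi_0^{(k)}$. Fixing $u\in(0,r)$ and applying the Steiner formula to the convex body $K_{-u}$,
\begin{align*}
	V\bigl((K_{-u})_u\bigr)=V\bigl(K_{-u}\oplus\bar B_u\bigr)=\sum_{k=0}^{n}\binom nk W_k(K_{-u})u^{k}=\sum_{k=0}^{n}\binom nk\phi_k(-u)u^{k};
\end{align*}
substituting $\phi_k=\tfrac{(n-k)!}{n!}\phi_0^{(k)}$, interchanging the sums and using $\sum_{k=0}^{j}\binom jk(-1)^{j-k}=0$ for $j\ge1$, the right-hand side collapses to $a_0=\phi_0(0)=V(K)$. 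Since $(K_{-u})_u\subseteq K$ are compact convex sets of equal positive volume, $(K_{-u})_u=K$ for every $u\in(0,r)$; letting $u\to r^-$ as before gives $(K_{-r})_r=K$, i.e.\ (c), hence (d). This closes the chain (a)$\Rightarrow$(c)$\Leftrightarrow$(d)$\Rightarrow$(a), which together with (b)$\Leftrightarrow$(c) and (d)$\Leftrightarrow$(e) proves the theorem.

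The main obstacle is the implication (a)$\Rightarrow$(d): from bare differentiability of the $\phi_i$ one must first extract $\rho\ge r$ (the argument above resting on $W_n(K_{-\rho})\ne0$), and then the vanishing of the ``inner defect'' $V(K)-V((K_{-u})_u)$ --- for which the binomial collapse of $\sum_k\binom nk\phi_k(-u)u^k$ to $a_0$, and the observation that for convex bodies equal volume forces equal sets, are the decisive points. One must also watch the borderline case $\rho=r$, where $K_{-r}$ may be lower-dimensional, and the exact ranges of validity of the parallel-body identities used throughout.
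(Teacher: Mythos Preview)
Your argument is correct and is genuinely different from the paper's. The paper does not prove (a)$\Leftrightarrow$(b)$\Leftrightarrow$(c) at all but simply cites \cite[proof of Theorem 1.1]{Hernandez-Cifre2010b}; it then supplies only (c)$\Rightarrow$(d) via a direct projection argument (given $x\in\interior(\partial K)$, project first onto $\partial(K_{-r})$ and trace back along the normal using Lemma~\ref{projectionproperties} and \eqref{tracingthenormal}) and (d)$\Rightarrow$(a) via Theorem~\ref{theoremsteinerformulaiffpoasitivereach}. Your route is self-contained: you derive (b)$\Leftrightarrow$(c) from the elementary parallel-body identities, obtain (c)$\Leftrightarrow$(d) entirely from the semigroup clause of Theorem~\ref{theoremsteinerformulaiffpoasitivereach} together with a Hausdorff-limit argument, and---most notably---prove (a)$\Rightarrow$(c) from scratch. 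The key observation there, that $\sum_{k}\binom{n}{k}\phi_k(-u)u^{k}=\sum_{k}\frac{1}{k!}\phi_0^{(k)}(-u)u^{k}$ is precisely the Taylor expansion of the degree-$n$ polynomial $\phi_0$ about $-u$ evaluated at $0$, hence equals $\phi_0(0)=V(K)$, is an elegant replacement for the external reference. What you gain is independence from \cite{Hernandez-Cifre2010b}; what the paper gains is brevity, and its geometric proof of (c)$\Rightarrow$(d) avoids invoking the full strength of Theorem~\ref{theoremsteinerformulaiffpoasitivereach}.

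One caveat worth tightening: in the degenerate case $\dim K<n$ one has $\partial K=K$ and hence $\reach(\partial K)=\infty$, so (d) actually \emph{holds} while (a), (b), (c), (e) fail; this is a wrinkle in the theorem's statement rather than in your proof, and the paper does not address it either. Your remark that ``the statement is to be read with (d) failing as well'' should be replaced by the honest observation that the equivalence is asserted for full-dimensional $K$ (equivalently, one reads (d) together with the hypersurface hypothesis already built into (e) and into Theorem~\ref{theoremsteinerformulaiffpoasitivereach}).
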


Additionally, these results give us a long time existence result for the energy dissipation equality (EDE) gradient flow of the mean breadth $W_{n-1}$ on the space $\mathcal{K}^{1,1}$, of all sets in $\mathcal{K}^{n}$ 
with non-empty interior and $C^{1,1}$ boundary, equipped with the Hausdorff distance $d_{\HM}$. For the essential notation see the beginning of Section \ref{sectiongradientflow} and for 
more detailed information on gradient flows on metric spaces we refer to \cite{Ambrosio2005a}.

\begin{proposition}[(Gradient flow of the mean breath $W_{n-1}$ on $(\mathcal{K}^{1,1},\distH)$)]\label{gradientflowmeanbreadth}
	Let $K\in \mathcal{K}^{1,1}$ and $T\vcentcolon=\omega_{n}^{-1}\reach(\partial K)$ then
	\begin{align*}
		x:[0,T)\to\mathcal{K}^{1,1},\, t\mapsto K_{-\omega_{n} t}
	\end{align*}
	is a gradient flow in the (EDE) sense for $W_{n-1}$ on $(\mathcal{K}^{1,1},\distH)$, i.e.
	\begin{align}\label{gradientflowede}
		W_{n-1}(x(t))+\frac{1}{2}\int_{s}^{t}\abs{\dot x(u)}^{2}\dd u+\frac{1}{2}\int_{s}^{t}\abs{\nabla W_{n-1}}^{2}(x(u))\dd u=W_{n-1}(x(s))
	\end{align}
	for all $0\leq s\leq t<T$ and $x$ is an absolutely continuous curve. Additionally, $x(t)\to x(T)$ in $\distH$ for $t\to T$, where $x(T)\vcentcolon=K_{-\reach(\partial K)}$, and $x(T)$ 
	is either a convex set contained in an affine $n-1$ dimensional space or a convex set with non-empty interior with $\reach(\partial x(T))=0$.
\end{proposition}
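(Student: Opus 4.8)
The plan is to verify the energy dissipation equality \eqref{gradientflowede} by computing its three constituents along the curve — the values $W_{n-1}(x(t))$, the metric derivative $\abs{\dot x}$, and the metric slope $\abs{\nabla W_{n-1}}$ — and checking that they fit together. Set $R\vcentcolon=\reach(\partial K)$, which is positive by Theorem \ref{equivalencepositivereachC11} since $\partial K$ is a closed $C^{1,1}$ hypersurface, so $T=\omega_n^{-1}R$. By Theorem \ref{bigequivalence} one has $(K_{-\rho})_\rho=K\neq\emptyset$ for $0\le\rho\le R$, hence the inner parallel bodies $K_{-\rho}$ are non-empty for $0\le\rho\le R$; in particular $R\le r(K)$ and $x(t)=K_{-\omega_n t}$ is a well-defined element of $\mathcal{K}^{1,1}$ for $t\in[0,T)$ (non-empty interior because $\omega_n t<R\le r(K)$, and $C^{1,1}$ boundary with $\reach(\partial x(t))\ge R-\omega_n t>0$ by Theorem \ref{equivalencepositivereachC11} and the semigroup property).

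First I would compute $W_{n-1}$ along the curve. By Theorem \ref{bigequivalence} (equivalently the first two bullets of Theorem \ref{theoremsteinerformulaiffpoasitivereach}) the alternating Steiner formula $V(K_s)=\sum_{k=0}^n\binom nk W_k(K)s^k$ holds for $\abs s<R$, together with the semigroup identity $(K_{-\rho})_u=K_{u-\rho}$. Expanding $V(K_{u-\rho})=\sum_k\binom nk W_k(K)(u-\rho)^k$ in powers of $u$ and matching the coefficient of $u^{n-1}$ with that of $V((K_{-\rho})_u)=\sum_j\binom nj W_j(K_{-\rho})u^j$ yields $W_{n-1}(K_{-\rho})=W_{n-1}(K)-\rho\,W_n(K)=W_{n-1}(K)-\omega_n\rho$, where $W_n(K)=\omega_n$ since $\chi(K)=1$ (e.g. \cite[5.19 Theorem]{Federer1959a}). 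Hence $t\mapsto W_{n-1}(x(t))=W_{n-1}(K)-\omega_n^2 t$ is affine and $W_{n-1}(x(s))-W_{n-1}(x(t))=\omega_n^2(t-s)$ for $0\le s\le t<T$.

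Next I would compute the metric derivative and the slope. The identity $(K_{-\rho})_\rho=K$ forces $h(K_{-\rho},\cdot)=h(K,\cdot)-\rho$ on $\Sphere^{n-1}$ for $0\le\rho\le R$, which together with the standard fact $\distH(L,M)=\norm{h(L,\cdot)-h(M,\cdot)}_\infty$ for compact convex $L,M$ gives $\distH(x(t),x(t'))=\omega_n\abs{t-t'}$ for $t,t'\in[0,T)$; thus $x$ is $\omega_n$-Lipschitz, in particular absolutely continuous, with $\abs{\dot x(u)}=\omega_n$ for every $u\in(0,T)$. I claim $\abs{\nabla W_{n-1}}(x(u))=\omega_n$ as well, where the slope is taken in $(\mathcal{K}^{1,1},\distH)$. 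The bound $\abs{\nabla W_{n-1}}\le\omega_n$ holds on all of $\mathcal{K}^n$ because $W_{n-1}$ is $\omega_n$-Lipschitz in $\distH$ — immediate from the mean breadth representation $W_{n-1}(L)=\tfrac1n\int_{\Sphere^{n-1}}h(L,\cdot)\dd\HM^{n-1}$ and $\HM^{n-1}(\Sphere^{n-1})=n\omega_n$, or from monotonicity of the querma{\ss}integrals together with $W_{n-1}(L_\epsilon)=W_{n-1}(L)+\omega_n\epsilon$. For the reverse inequality I would test with the competitors $M\vcentcolon=(x(u))_{-\rho}=K_{-\omega_n u-\rho}\in\mathcal{K}^{1,1}$, $\rho>0$ small: by the previous computations $\distH(x(u),M)=\rho$ and $W_{n-1}(x(u))-W_{n-1}(M)=\omega_n\rho$, so the difference quotient equals $\omega_n$ and $\abs{\nabla W_{n-1}}(x(u))\ge\omega_n$. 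This slope computation — pinning the Lipschitz constant of $W_{n-1}$ down to exactly $\omega_n$ and exhibiting competitors that realise it, which is precisely where positive reach enters, via $(K_{-\rho})_\rho=K$ and $\distH(L,L_{-\rho})=\rho$ — is the step that needs the most care; the rest is bookkeeping with the Steiner polynomial and the equivalences of Theorem \ref{bigequivalence}.

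Finally, substituting $\abs{\dot x(u)}=\abs{\nabla W_{n-1}}(x(u))=\omega_n$ into \eqref{gradientflowede} and using the affineness from the second paragraph gives, for $0\le s\le t<T$,
\begin{align*}
	W_{n-1}(x(t))+\tfrac12\int_s^t\omega_n^2\dd u+\tfrac12\int_s^t\omega_n^2\dd u=W_{n-1}(x(t))+\omega_n^2(t-s)=W_{n-1}(x(s)),
\end{align*}
i.e. the (EDE). Since $(K_{-R})_R=K$ by Theorem \ref{bigequivalence}, the support function identity also holds at $\rho=R$, so $\distH(x(t),K_{-R})=R-\omega_n t\to0$, that is $x(t)\to x(T)=K_{-R}$ in $\distH$. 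The set $x(T)=K_{-R}$ is non-empty, compact and convex; if $\interior(x(T))=\emptyset$ it is contained in an affine $(n-1)$-dimensional subspace, while if $\interior(x(T))\neq\emptyset$ and one had $\reach(\partial x(T))=\delta>0$, then Theorem \ref{bigequivalence} would produce a convex $L$ with $x(T)=L_\delta$, whence $K=(x(T))_R=L_{\delta+R}$, and a second application of Theorem \ref{bigequivalence} would force $\reach(\partial K)\ge\delta+R>R$, a contradiction; hence $\reach(\partial x(T))=0$ in that case.
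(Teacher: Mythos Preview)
Your proof is correct and follows the same overall plan as the paper: compute $\abs{\dot x}=\omega_n$, $\abs{\nabla W_{n-1}}=\omega_n$, and the affine decay of $W_{n-1}(x(t))$, then assemble the (EDE). The paper packages the slope computation into Lemma \ref{computationofslopequermass} (monotonicity of the querma{\ss}integrals plus the inner-parallel competitor), while you argue the upper bound via the $\omega_n$-Lipschitz constant of $W_{n-1}$ from the support-function integral and the lower bound by the same competitor --- equivalent content, slightly different packaging. The one substantive difference is the terminal-set dichotomy: the paper shows that if $\reach(\partial x(T))>0$ then the outer normals of $\partial K$ and $\partial x(T)$ agree along the flow map and derives a contradiction to $\reach(\partial K)=R$ from Lemma \ref{alternativecharacterisationreach}; your route --- write $x(T)=L_\delta$ via Theorem \ref{bigequivalence}, deduce $K=(x(T))_R=L_{R+\delta}$, and apply Theorem \ref{bigequivalence} again to force $\reach(\partial K)\ge R+\delta$ --- is cleaner and avoids the normal-cone geometry entirely.
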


By $\omega_{n}$ we denote the $n$-dimensional volume of the unit ball in $\R^{n}$, i.e. $\omega_{n}\vcentcolon=\HM^{n}(B_{1}(0))$.

\subsection*{Acknowledgements}

The author wishes to thank his advisor Heiko von der Mosel for his interest and encouragement as well as reading the manuscript and making many helpful suggestions. 
Additionally, we thank the participants of the ``2\textsuperscript{nd} Workshop on Geometric Curvature Energies''
2012 in Steinfeld, Germany, for valuable comments and remarks, and finally Rolf Schneider for his detailed answer to a particular question on querma{\ss}integrals.

\section{Sets of positive reach}\label{sectionpositivereach}

As a generalisation of convex sets Federer introduced in his seminal paper \cite{Federer1959a} the notion of sets of positive reach. A closed set $A\subset \R^{n}$ is said to be of \emph{reach} $t$ at a point $a\in A$, 
denoted by $\reach(A,a)=t$, if $t$ is the supremum of all $\rho>0$ such that the restriction $\tilde\xi_{A}\vert_{B_{\rho}(a)}$ of the metric projection map
\begin{align*}
	\tilde\xi_{A}:\R^{n}\to \mathcal{P}(A),\,x\mapsto \{a\in A\mid \abs{x-a}=\dist(x,A)\}
\end{align*}
is single valued, or to be more precise, singleton valued. Here, $\mathcal{P}(A)$ denotes the \emph{power set} of $A$. The \emph{reach of a set} $A$ is then defined to be $\reach(A)\vcentcolon=\inf_{a\in A}\reach(A,a)$. 
By $\Unp(A)$ we denote the set of all points that have a unique nearest point in $A$, that is
\begin{align*}
	\Unp(A)\vcentcolon=\{x\in\R^{n}\mid \#\tilde\xi_{A}(x)=1\}.
\end{align*}
Now, we introduce another \emph{metric projection map} $\xi_{A}$, defined on $\Unp(A)$ so that $\tilde\xi_{A}(x)$ is already a singleton, by
\begin{align*}
	\xi_{A}:\Unp(A)\to A,\,x\mapsto \argmin_{a\in A}(\abs{x-a}).
\end{align*}
This is essentially the same mapping as before, but it ``extracts'' the unique nearest point from the singleton.\\

In what follows, we always assume $A\subset \R^{n}$, $A\not\in\{\emptyset,\R^{n}\}$, so that
we do not have to worry about certain pathologies. Especially, we have $\partial A\not=\emptyset$, because else we would have $\overline A=A\dot{\cup}\partial A$, but $A=\emptyset$ and $A=\R^{n}$ are the
only closed and open sets in $\R^{n}$. We also use $\dist(x,A)=\dist(x,\overline A)$ and for $x\not\in A$ additionally $\dist(x,A)=\dist(x,\partial A)$ without further notice.

\begin{lemma}[(Properties of $\tilde\xi_{A}$)]\label{projectionproperties}
	Let $A\subset \R^{n}$, $a\in A$. Then $a\in \tilde\xi_{A}(x)$ if and only if $\xi_{A}(x_{t})=a$ for $x_{t}\vcentcolon=a+t(x-a)$ and $t\in [0,1)$.
\end{lemma}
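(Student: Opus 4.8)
The plan is to establish the two implications separately, using only the triangle inequality in $\R^{n}$ together with the fact that it degenerates to an equality exactly along segments. Throughout, recall $\abs{x_{t}-a}=t\abs{x-a}$ and $\abs{x-x_{t}}=(1-t)\abs{x-a}$.

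For the direction ``$a\in\tilde\xi_{A}(x)\ \Rightarrow\ \xi_{A}(x_{t})=a$ for all $t\in[0,1)$'', I would fix $t\in[0,1)$ (the case $t=0$ being trivial since $x_{0}=a\in A$) and take an arbitrary $b\in A$. The hypothesis gives $\abs{x-a}\leq\abs{x-b}$, so
\begin{align*}
	\abs{x-a}\leq\abs{x-b}\leq\abs{x-x_{t}}+\abs{x_{t}-b}=(1-t)\abs{x-a}+\abs{x_{t}-b},
\end{align*}
whence $\abs{x_{t}-b}\geq t\abs{x-a}=\abs{x_{t}-a}$; thus $a$ is a nearest point of $x_{t}$ in $A$. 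For uniqueness, suppose $\abs{x_{t}-b}=\abs{x_{t}-a}$ for some $b\in A$. Then the displayed chain collapses to equalities, so $\abs{x-b}=\abs{x-a}$ and $\abs{x-b}=\abs{x-x_{t}}+\abs{x_{t}-b}$; the latter forces $x_{t}$ to lie on the segment joining $x$ and $b$, at distance $(1-t)\abs{x-a}=(1-t)\abs{x-b}$ from $x$, i.e.\ $x_{t}=(1-t)b+tx$. Comparing with $x_{t}=(1-t)a+tx$ gives $(1-t)(b-a)=0$, hence $b=a$ since $t<1$. Therefore $x_{t}\in\Unp(A)$ and $\xi_{A}(x_{t})=a$.

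For the converse, suppose $\xi_{A}(x_{t})=a$ for every $t\in[0,1)$. Then for each such $t$ and each $b\in A$ we have $\abs{x_{t}-a}\leq\abs{x_{t}-b}$, that is, $t\abs{x-a}\leq\abs{a+t(x-a)-b}$. Since both sides are continuous in $t$, letting $t\uparrow 1$ yields $\abs{x-a}\leq\abs{x-b}$; as $b\in A$ was arbitrary and $a\in A$, this gives $\abs{x-a}=\dist(x,A)$, i.e.\ $a\in\tilde\xi_{A}(x)$. (This simultaneously covers the degenerate case $x\in A$: taking $b=x$ forces $a=x$.)

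The only step that needs a little care is the rigidity of the triangle inequality invoked in the uniqueness part of the first implication — namely that equality in $\abs{x-b}\leq\abs{x-x_{t}}+\abs{x_{t}-b}$ holds precisely when $x_{t}$ lies on the segment between $x$ and $b$ — which is exactly the point where strict convexity of the Euclidean norm enters. Everything else is elementary algebra and a continuity/limit argument, so I do not expect any genuine obstacle.
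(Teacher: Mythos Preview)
Your proof is correct and follows essentially the same strategy as the paper: the forward implication is exactly the triangle-inequality argument together with its rigidity case, only organised as a direct proof plus uniqueness rather than the paper's proof by contradiction. For the converse you pass to the limit $t\uparrow 1$, whereas the paper instead fixes a single sufficiently large $t\in(0,1)$ and derives the contradiction explicitly; your limit argument is a bit cleaner, but the underlying content is the same.
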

\begin{proof}
	\textbf{Step 1}
		Let $a\in \tilde\xi_{A}(x)$.
		Suppose there is $b\in A\backslash\{a\}$ with $\abs{x_{t}-b}\leq \abs{x_{t}-a}$ for a fixed $t\in [0,1)$. Then
		\begin{align}\label{strictinequalityab}
			\begin{split}
				\MoveEqLeft
				\abs{x-b}< \abs{x-x_{t}}+\abs{x_{t}-b}
				\leq\abs{x-x_{t}}+\abs{x_{t}-a}\\
				&=\abs{x-[a+t(x-a)]}+\abs{[a+t(x-a)]-a}\\
				&=(1-t)\abs{x-a}+t\abs{x-a}\\
				&=\abs{x-a},
			\end{split}
		\end{align}
		but this contradicts $a\in \tilde\xi_{A}(x)$. The strict inequality in \eqref{strictinequalityab} holds, because else we would have
		$b\in x+[0,\infty)(a-x)$, which is not compatible with $\abs{x_{t}-b}\leq \abs{x_{t}-a}$ and $\abs{a-x}\leq \abs{b-x}$.\\
	\textbf{Step 2}
		Let $\xi_{A}(x_{t})=a$ for $t\in [0,1)$ and assume that there is $b\in A\backslash\{a\}$, such that $\abs{b-x}<\abs{a-x}$. Then
		\begin{align*}
			2(1-t)\abs{x-a}+\abs{x-b}=2\abs{x_{t}-x}+\abs{x-b}<\abs{x-a}
		\end{align*}
		for $2^{-1}+2^{-1}\abs{x-b}/\abs{x-a}<t<1$, so that
		\begin{align*}
			\abs{x_{t}-b}\leq \abs{x_{t}-x}+\abs{x-b}<\abs{x-a}-\abs{x_{t}-x}=t\abs{x-a}=\abs{x_{t}-a},
		\end{align*}
		which contradicts our hypothesis.
\end{proof}

We define the \emph{tangent cone} of a set $A\subset\R^{n}$ at $a\in A$, to be
\begin{align*}
	\Tan_{a}A\vcentcolon=\Big\{tv\mid t\geq 0, \exists a_{k}\in A\backslash \{a\}:v=\lim_{k\to\infty}\frac{a_{k}-a}{\abs{a_{k}-a}}\Big\}\cup\{0\}
\end{align*}
and the \emph{normal cone} of $A$ at $a$ to be the dual cone of $\Tan_{a}A$, in other words
\begin{align}\label{definitionnormalcone}
	\Nor_{a}A\vcentcolon=\dual(\Tan_{a}A)
	=\{u\in\R^{n}\mid \langle u,v\rangle\leq 0\text{ for all }v\in \Tan_{a}A\}.
\end{align}
The normal cone is always a convex cone, while it may happen that the tangent cone is not convex. From \cite[4.8 Theorem (2)]{Federer1959a} we know that $\xi_{A}(x)=a$ implies
$x-a\in\Nor_{a}A$. Another representation of the normal cone
\begin{align}\label{normalconefederer12}
	\Nor_{a}A=\{tv\mid t\geq 0, \abs{v}=s,\xi_{A}(a+v)=a\}
\end{align} 
for $\reach(A,a)>s>0$ can be found in \cite[4.8 Theorem (12)]{Federer1959a}. Unfortunately, there seems to be a small gap at the very end of the proof of this item in Federer's paper.
Namely, it has not been taken into consideration that the cone $S$, which is set to be the
right-hand side of \eqref{normalconefederer12}, can a priori be empty. That this is indeed not the case is shown in Lemma \ref{projectionissurjectiveonUbackslashA}.
From \eqref{normalconefederer12} we infer
\begin{align}\label{tracingthenormal}
	x-a\in\Nor_{a}A,\, x\not=a\quad\Rightarrow\quad
	\xi_{A}(x_{s})=a\quad\text{ for }s<\reach(A,a)\text{ and }
	x_{s}=a+s\frac{x-a}{\abs{x-a}},
\end{align}
as $s\frac{x-a}{\abs{x-a}}\in \Nor_{a}A$, so that $v$ from \eqref{normalconefederer12} must be equal to $s\frac{x-a}{\abs{x-a}}$.

\begin{figure}
	\centering 
	\includegraphics[width=0.8\textwidth]{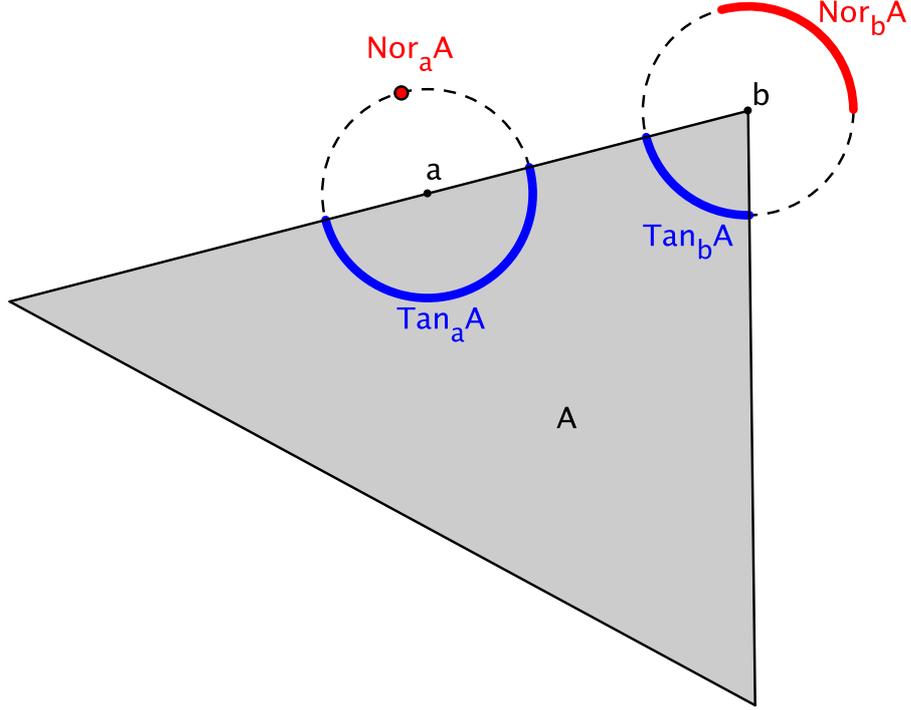}
	\caption{Directions in tangent and normal cone of a set $A$ at two different points.}
	\label{tangentnormalcone}
\end{figure}

\begin{lemma}[(If $\reach(A,a)>0$ then there is $v\in \Unp(A)\backslash \{a\}$ with $\xi_{A}(v)=a$)]\label{projectionissurjectiveonUbackslashA}
	Let $A\subset\R^{n}$ be a closed set, $a\in \partial A$ and $\reach(A,a)>0$. Then there is $v\in \Unp(A)\backslash \{a\}$ with $\xi_{A}(v)=a$.
\end{lemma}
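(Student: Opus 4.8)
The plan is to approach the boundary point $a$ from outside $A$, follow the metric projections of the approaching points, and pass to a limit; the limiting direction will produce the point $v$ we are after. Throughout write $\rho\vcentcolon=\reach(A,a)>0$; by the definition of reach, $\tilde\xi_{A}$ is singleton valued on the open ball $B_{\rho}(a)$, hence $B_{\rho}(a)\subset\Unp(A)$. I shall also use the elementary localisation $\reach(A,b)\geq\rho-\abs{b-a}$ for every $b\in A\cap B_{\rho}(a)$, which holds because any $w\in B_{\rho-\abs{b-a}}(b)$ lies in $B_{\rho}(a)$ and so has $\#\tilde\xi_{A}(w)=1$.

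First I would use that $a\in\partial A$ to choose $y_{k}\in\R^{n}\backslash A$ with $y_{k}\to a$. For $k$ large $y_{k}\in B_{\rho}(a)\subset\Unp(A)$, so $b_{k}\vcentcolon=\xi_{A}(y_{k})\in A$ is well defined; since $a\in A$ we get $\abs{y_{k}-b_{k}}=\dist(y_{k},A)\leq\abs{y_{k}-a}\to 0$, whence $b_{k}\to a$, and $y_{k}\neq b_{k}$ because $y_{k}\notin A$. Passing to a subsequence, the unit vectors $u_{k}\vcentcolon=(y_{k}-b_{k})/\abs{y_{k}-b_{k}}$ converge to a unit vector $u$. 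By \cite[4.8 Theorem (2)]{Federer1959a}, $\xi_{A}(y_{k})=b_{k}$ forces $y_{k}-b_{k}\in\Nor_{b_{k}}A$, and therefore $u_{k}\in\Nor_{b_{k}}A$; moreover the localisation above gives $\reach(A,b_{k})\geq\rho-\abs{b_{k}-a}>\rho/2$ for all large $k$. Applying \eqref{tracingthenormal} with base point $b_{k}$ and $x=b_{k}+u_{k}$ then yields $\xi_{A}(b_{k}+\tfrac{\rho}{2}u_{k})=b_{k}$, that is, $\dist(b_{k}+\tfrac{\rho}{2}u_{k},A)=\tfrac{\rho}{2}$.

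Finally I would pass to the limit: set $v\vcentcolon=a+\tfrac{\rho}{2}u$, so that $b_{k}+\tfrac{\rho}{2}u_{k}\to v$, and since $x\mapsto\dist(x,A)$ is $1$-Lipschitz, $\dist(v,A)=\lim_{k}\dist(b_{k}+\tfrac{\rho}{2}u_{k},A)=\tfrac{\rho}{2}=\abs{v-a}$. Thus $a$ realises the distance from $v$ to $A$, i.e. $a\in\tilde\xi_{A}(v)$; as $v\in B_{\rho}(a)\subset\Unp(A)$ this forces $\xi_{A}(v)=a$, and $v\neq a$ since $\abs{v-a}=\rho/2>0$, which is the assertion. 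I expect the only steps needing care to be the localisation of the reach (so that the fixed radius $\rho/2$ works uniformly in $k$) and the passage from $\xi_{A}(y_{k})=b_{k}$ to $\xi_{A}(b_{k}+\tfrac{\rho}{2}u_{k})=b_{k}$, which traces the normal beyond $y_{k}$ and is exactly where the positive reach of $A$ near $b_{k}$ enters through \eqref{tracingthenormal}; everything else is bookkeeping.
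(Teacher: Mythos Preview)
Your approach is essentially the paper's: take $y_{k}\notin A$ converging to $a$, push the nearest-point projection $b_{k}=\xi_{A}(y_{k})$ outward along the ray $b_{k}+tu_{k}$ to a fixed radius, and pass to the limit. The paper writes the pushed-out point as $\eta(u_{k},r)=\xi_{A}(u_{k})+\frac{\delta(u_{k})+r}{\delta(u_{k})}(u_{k}-\xi_{A}(u_{k}))$ and concludes via continuity of $\xi_{A}$ rather than of $\dist(\cdot,A)$; these differences are cosmetic.

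There is, however, a genuine circularity in your appeal to \eqref{tracingthenormal}. In this paper's presentation, \eqref{tracingthenormal} is deduced from \eqref{normalconefederer12}, i.e.\ from \cite[4.8~Theorem~(12)]{Federer1959a}, and the paragraph immediately preceding the lemma states that Lemma~\ref{projectionissurjectiveonUbackslashA} is precisely what is needed to close a gap in Federer's proof of that item (the cone on the right of \eqref{normalconefederer12} is not shown to be non-empty). Invoking \eqref{tracingthenormal} to prove the lemma therefore assumes what the lemma is meant to supply. The paper avoids this by handling the extension step directly: its Step~2 shows $\xi_{A}(\eta(u_{k},\rho))=\xi_{A}(u_{k})$ for all $\rho\in[0,r]$ using only Lemma~\ref{projectionproperties} and \cite[4.8~Theorem~(6)]{Federer1959a}, arguing that the supremum $\tau$ of parameters along the ray with projection $\xi_{A}(u_{k})$ would otherwise yield a point outside $\Unp(A)^{\circ}$ yet inside $B_{\reach(A,a)}(a)$. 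Replace your citation of \eqref{tracingthenormal} by this direct argument (or any argument independent of \eqref{normalconefederer12}); once that is done your proof is complete and matches the paper's.
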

\begin{proof}
	\textbf{Step 1}
		We adapt the proof of \cite[4.8 Theorem (11)]{Federer1959a} to our situation. Let $a\in \partial A$, $0<r<\reach(A,a)$, $0<\epsilon<\reach(A,a)-r$. 
		Without loss of generality we might assume that $a=0$. Then there is a sequence $u_{k}\in \Unp(A)\backslash A$, with $u_{k}\to a$, $\abs{u_{k}}< \epsilon/3$.
		For $\rho\in [0,r]$, $k\in\N$ and $\delta(x)\vcentcolon=\dist(x,A)$ set
		\begin{align*}
			\eta(u_{k},\rho)\vcentcolon&=\xi_{A}(u_{k})+\frac{\delta(u_{k})+\rho}{\delta(u_{k})}(u_{k}-\xi_{A}(u_{k})).
		\end{align*}
		Then
		\begin{align*}
			\abs{\eta(u_{k},\rho)}\leq 2\abs{u_{k}}+\delta(u_{k})+\rho\leq 3\abs{u_{k}}+ r \leq \epsilon +r<\reach(A,a),
		\end{align*}
		hence $\eta(u_{k},\rho)\in \Unp(A)$.\\
	\textbf{Step 2}
		Now, we want to show that $\xi_{A}(\eta(u_{k},\rho))=\xi_{A}(u_{k})$. Assume that this is not the case. Then
		\begin{align*}
			1\leq\tau \vcentcolon=\sup\{t>0\mid \xi_{A}[\xi_{A}(u_{k})+t(u_{k}-\xi_{A}(u_{k}))]=\xi_{A}(u_{k})\}
			\leq \frac{\delta(u_{k})+\rho}{\delta(u_{k})}<\infty,
		\end{align*}
		by Lemma \ref{projectionproperties}.
		Now, $\xi_{A}(u_{k})+\tau(u_{k}-\xi_{A}(u_{k}))\not\in \Unp(A)^{\circ}$, by \cite[4.8 Theorem (6)]{Federer1959a}, but
		\begin{align*}
			\MoveEqLeft
			\abs{\xi_{A}(u_{k})+\tau(u_{k}-\xi_{A}(u_{k}))}
			\leq \abs{\xi_{A}(u_{k})}+\tau \delta(u_{k})\\
			&\leq 2\abs{u_{k}}+\delta(u_{k})+\rho\leq 3\abs{u_{k}}+ r
			\leq \epsilon +r<\reach(A,a).
		\end{align*}
		\emph{Contradiction}.\\
	\textbf{Step 3}
		As $\abs{\eta(u_{k},r)}\leq \epsilon+r$ there must be a convergent subsequence, i.e. there is $v\in \R^{n}$ with
		\begin{align*}
			v=\lim_{l\to\infty}\eta(u_{k_{l}},r)\quad\text{and}\quad
			\abs{v}=\lim_{l\to\infty}\abs{\eta(u_{k_{l}},r)}=r,
		\end{align*}
		hence $v\in \Unp(A)\backslash \{a\}$ and according to Step 2 we have
		\begin{align*}
			\xi_{A}(v)=\lim_{l\to\infty}\xi_{A}(\eta(u_{k_{l}},r))=\lim_{l\to\infty}\xi_{A}(u_{k_{l}})=\xi_{A}(a)=a,
		\end{align*}
		since $\xi_{A}$ is continuous on $\Unp(A)$, see \cite[4.8 Theorem (4)]{Federer1959a}.
\end{proof}

Note that any closed hypersurface $A$ is compact and by the 
Jordan–Brouwer Separation Theorem it has a well-defined inside $\interior(A)$ and outside $\exterior(A)$.
From the definitions it is immediately clear that
\begin{align}\label{reachAminreachextint}
	\reach(A)=\min\{\reach(\overline{\interior(A)}),\reach(\overline{\exterior(A)})\}.
\end{align}

\begin{lemma}[(Alternative characterisation of $\reach$ I)]\label{alternativecharacterisationreach}
	Let $A\subset \R^{n}$ closed, $A\not\in\{\emptyset,\R^{n}\}$ and $\reach(A)>0$. Then
	\begin{align}\label{alternativerepresentationofreach}
		\reach(A)=\sup\{t\mid \forall a,b\in A, a\not=b:
		(a+\Nor_{a}A)\cap(b+\Nor_{b}A)\cap B_{t}(A)=\emptyset\}.
	\end{align}
\end{lemma}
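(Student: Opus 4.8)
The plan is to prove the equality \eqref{alternativerepresentationofreach} by establishing two inequalities. Write $R$ for the right-hand side of \eqref{alternativerepresentationofreach}, the supremum of all $t$ such that distinct normals $(a+\Nor_a A)$ and $(b+\Nor_b A)$ do not meet inside $B_t(A)$.

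\emph{Step 1: $\reach(A)\leq R$.} Fix $t<\reach(A)$; I want to show the normal condition holds at level $t$, so that $t\leq R$. Suppose not, i.e.\ there are $a\neq b$ in $A$ and a point $x\in (a+\Nor_a A)\cap(b+\Nor_b A)$ with $\dist(x,A)<t$ (here $B_t(A)$ should be read as the open parallel set). If $x\in A$ then $x=a=b$, a contradiction, so $x\notin A$ and $x-a\in\Nor_a A\setminus\{0\}$, $x-b\in\Nor_b A\setminus\{0\}$. By \eqref{tracingthenormal}, since $|x-a|\leq\dist(x,A)+|x-a|$... more simply: the ray from $a$ through $x$ has $\xi_A(x_s)=a$ for all $s<\reach(A,a)$ where $x_s=a+s(x-a)/|x-a|$, and likewise for $b$. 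I would slide along both normals to the common point $x$: choosing $s=\dist(x,A)<t<\reach(A)\le\reach(A,a),\reach(A,b)$ doesn't immediately land on $x$ since $|x-a|$ need not equal $\dist(x,A)$. Instead use that $\dist(x,A)\le|x-a|$ and $\dist(x,A)\le|x-b|$, and that $\xi_A$ is defined on the whole segment; the key fact from \cite[4.8]{Federer1959a} is that $x\in\Unp(A)$ whenever $\dist(x,A)<\reach(A)$, giving a \emph{unique} nearest point. But $a$ and $b$ are both nearest to... no, they are not nearest, they are merely base points of normals through $x$. The correct move: since $x-a\in\Nor_a A$ and $\dist(x,A)<\reach(A,a)$, tracing back along the normal by \eqref{normalconefederer12}/\eqref{tracingthenormal} shows $\xi_A\big(a+\dist(x,A)\tfrac{x-a}{|x-a|}\big)=a$; I claim this point is $x$ itself because $x$ lies on the segment from $a$ in direction of the nearest point, and $\dist(x,A)=|x-\xi_A(x)|$ forces, after a short argument using that the nearest point of $x$ lies on the inward ray, $\xi_A(x)=a$. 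Symmetrically $\xi_A(x)=b$, so $a=b$, contradiction. So every $t<\reach(A)$ lies in the set, hence $\reach(A)\le R$.

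\emph{Step 2: $R\leq\reach(A)$.} Fix $t<R$; I must show $\reach(A,a)\ge t$ for every $a\in A$, equivalently that $\tilde\xi_A$ is single-valued on $B_t(a)$. Suppose some $x$ with $\dist(x,A)<t$ has two nearest points $a,b\in A$, $a\neq b$. Then $|x-a|=|x-b|=\dist(x,A)<t$. By \cite[4.8 Theorem (2)]{Federer1959a} (quoted in the excerpt), $\xi_A(y)=c$ implies $y-c\in\Nor_c A$; more directly, a nearest point $a$ of $x$ satisfies $x-a\in\Nor_a A$ (this is standard and also covered by \eqref{normalconefederer12}, whose non-emptiness is Lemma \ref{projectionissurjectiveonUbackslashA}, though here $x$ itself witnesses it). Hence $x\in(a+\Nor_a A)\cap(b+\Nor_b A)$ and $x\in B_t(A)$, contradicting $t<R$. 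Therefore no such $x$ exists, $\reach(A)\ge t$ for all $t<R$, so $R\le\reach(A)$.

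\emph{Combining} the two steps gives $\reach(A)=R$, which is \eqref{alternativerepresentationofreach}.

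\emph{Main obstacle.} The delicate point is Step 1: from "$x$ lies on both normals $a+\Nor_a A$ and $b+\Nor_b A$" I only get that $a$ and $b$ are \emph{feet of normals} through $x$, not a priori that either is the nearest point of $x$. I need the standard consequence that if $v\in\Nor_a A$ and $|v|<\reach(A,a)$ then $\xi_A(a+v)=a$ — this is exactly \eqref{normalconefederer12} together with \eqref{tracingthenormal} — applied with $v=x-a$ (after checking $|x-a|<\reach(A,a)$, which follows once one knows $|x-a|=\dist(x,A)$; and that last identity is precisely what must be extracted, using that the nearest point of any $x\notin A$ lies on the ray from $x$ toward $A$ and \eqref{tracingthenormal}). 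Getting these bookkeeping steps about which segment point maps where under $\xi_A$ straight, using Lemma \ref{projectionproperties} and \eqref{tracingthenormal}, is the real content; the rest is immediate.
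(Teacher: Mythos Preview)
Your Step 2 is correct and essentially identical to the paper's argument.

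Your Step 1, however, has a genuine gap that cannot be repaired along the lines you sketch. You correctly isolate the difficulty: from $x-a\in\Nor_a A$ you want $\xi_A(x)=a$, and for this you need $|x-a|<\reach(A,a)$. You propose to get this from $|x-a|=\dist(x,A)$, but that identity is simply \emph{false} in general. Take $A=\Sphere^{1}\subset\R^{2}$, $a=(1,0)$, $b=(-1,0)$: both normal lines are the $x$-axis, and $x=(0.9,0)$ lies on both, with $\dist(x,A)=0.1$ but $|x-b|=1.9$. So $|x-b|\neq\dist(x,A)$ and $|x-b|\not<\reach(A)=1$, and your intended application of \eqref{tracingthenormal} at $b$ fails. (The side remark ``if $x\in A$ then $x=a=b$'' is also wrong, for the same reason: $x=a=(1,0)\in A$ lies on $b+\Nor_b A$ with $b\neq a$.) The ``short argument'' you hope to extract is circular; there is no way to deduce $\xi_A(x)=a$ from $x-a\in\Nor_a A$ and $\dist(x,A)<\reach(A)$ alone.

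The paper's Step 1 is different and much shorter. It never tries to control $\dist(x,A)$. It argues by contrapositive: if $x=a+u=b+v$ with $u\in\Nor_a A$, $v\in\Nor_b A$ and \emph{both} $|u|<\reach(A)$, $|v|<\reach(A)$, then \eqref{tracingthenormal} gives $\xi_A(x)=a$ and $\xi_A(x)=b$ simultaneously, contradicting $a\neq b$. Hence every intersection point satisfies $\max(|x-a|,|x-b|)\ge\reach(A)$. That is the whole argument.

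One consequence worth noting: under your reading of $B_t(A)$ as $\{x:\dist(x,A)<t\}$, the circle example above shows that \eqref{alternativerepresentationofreach} is literally false. What the paper's proof actually establishes in Step~1, and what it checks in Step~2 (recording $|x_\epsilon-b_\epsilon|=|x_\epsilon-c_\epsilon|\le\reach(A)+\epsilon$), is the statement with the intersection point required to lie in $B_t(a)\cap B_t(b)$, i.e.\ normals \emph{truncated to length $t$} do not meet. Read the formula that way and both directions go through in one line each.
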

\begin{proof}
	Let $a,b\in A$, $a\not= b$ and $u\in \Nor_{a}A$, $v\in \Nor_{b}A$ with $a+u=b+v$. Then by \eqref{tracingthenormal} we must have either $\abs{u}\geq \reach(A)$ or $\abs{v}\geq \reach(A)$, 
	because else $\xi_{A}(a+u)=a$ and $\xi_{A}(b+v)=b$ contradicts $a\not=b$. Hence $\reach(A)$ is not larger than the right-hand side of \eqref{alternativerepresentationofreach}.
	This means, for $\reach(A)=\infty$ we have proven the proposition. Let $\reach(A)<\infty$. Clearly, for $\epsilon>0$ there must be $a_{\epsilon}\in A$ and $u_{\epsilon}\in\Sphere^{n-1}$
	with $x_{\epsilon}=a_{\epsilon}+(\reach(A)+\epsilon)u_{\epsilon}\not\in\Unp(A)$. Hence, there are two different points $b_{\epsilon}\not= c_{\epsilon}$
	such that $b_{\epsilon},c_{\epsilon}\in \tilde\xi_{A}(x_{\epsilon})$. Therefore $x_{\epsilon}-b_{\epsilon}\in\Nor_{b_{\epsilon}}A$ and $x_{\epsilon}-c_{\epsilon}\in\Nor_{c_{\epsilon}}A$,
	see \cite[4.8 Theorem (2)]{Federer1959a}, i.e. $x_{\epsilon}\in (b_{\epsilon}+\Nor_{b_{\epsilon}}A)\cap (c_{\epsilon}+\Nor_{b_{\epsilon}}A)$ 
	and $\abs{x_{\epsilon}-b_{\epsilon}}=\abs{x_{\epsilon}-c_{\epsilon}}\leq \abs{x_{\epsilon}-a_{\epsilon}}=\reach(A)+\epsilon$. Consequently, the right-hand side of \eqref{alternativerepresentationofreach} cannot be larger than $\reach(A)$.
\end{proof}

\begin{lemma}[(Properties of parallel sets)]\label{propertiesofparallelsets}
	Let $A\subset \R^{n}$, $A\not\in\{\emptyset,\R^{n}\}$.
	\begin{enumerate}
		\item
			For $s>0$ holds $\partial [A_{s}]\subset\{x\in \R^{n}\backslash A\mid \dist(x,\partial A)=s\}$.
		\item
			For $s,t\geq 0$ holds $(A_{s})_{t}=A_{s+t}$.
		\item
			For $s\geq 0$ and $-s\leq t\leq 0$ holds $A_{s+t}\subset (A_{s})_{t}$.
		\item
			For $s<0$ holds $\partial [A_{s}]=\{x\in A\mid \dist(x,\partial A)=\abs{s}\}$.
		\item
			For $s,t\leq 0$ holds $(A_{s})_{t}=A_{s+t}$.
		\item
			For $s\leq 0$ and $0\leq t\leq -s$ holds $(A_{s})_{t}\subset A_{s+t}$.
	\end{enumerate}
\end{lemma}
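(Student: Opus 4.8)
The plan is to prove the six items in the listed order, using that \textbf{(c)} will invoke \textbf{(a)} and \textbf{(e)} will invoke \textbf{(d)}, while \textbf{(a)}, \textbf{(b)}, \textbf{(d)} and \textbf{(f)} are self-contained. Three elementary tools recur throughout: that $x\mapsto\dist(x,C)$ is $1$-Lipschitz, hence $\abs{\dist(x,C)-\dist(y,C)}\le\abs{x-y}$; that parallel sets are closed (being sub-/super-level sets of continuous functions), so their topological boundaries are contained in themselves; and a radial retraction trick, that if $z\in A$ realises $\dist(x,A)$ then the points of the segment $[z,x]$ have easily controlled distance to $A$, and similarly for a nearest point $z\in\partial A$ of a point $x\in\interior A$. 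I will also repeatedly use that, since $A$ is closed with $A\notin\{\emptyset,\R^{n}\}$, a point $y\in A$ with $\dist(y,\partial A)>0$ lies in $\interior A$, and a point $x\notin A$ has $\dist(x,\partial A)=\dist(x,A)>0$.

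For \textbf{(a)}: a point $x\in\partial A_{s}$ lies in the closed set $A_{s}$, so $\dist(x,A)\le s$, and being a limit of points with $\dist(\cdot,A)>s$ it also satisfies $\dist(x,A)\ge s$; thus $\dist(x,A)=s>0$, which forces $x\notin A$ and $\dist(x,\partial A)=\dist(x,A)=s$. For \textbf{(b)} I will show $\dist(x,A_{s})\le t\iff\dist(x,A)\le s+t$: ``$\Rightarrow$'' is two triangle inequalities, through a near point of $A_{s}$ and then of $A$; for ``$\Leftarrow$'', if $\dist(x,A)\le s$ then already $x\in A_{s}$, and otherwise I push a nearest point $z\in A$ a distance $s$ towards $x$ to obtain a point of $A_{s}$ at distance $\dist(x,A)-s\le t$ from $x$. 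For \textbf{(c)} (where $s+t\ge 0$, and the cases $s=0$ or $\partial A_{s}=\emptyset$ are trivial): $x\in A_{s+t}$ gives $\dist(x,A)\le s+t\le s$, so $x\in A_{s}$, and for any $w\in\partial A_{s}$ item \textbf{(a)} yields $\dist(w,A)=s$, whence $s=\dist(w,A)\le\abs{w-x}+\dist(x,A)\le\abs{w-x}+s+t$, i.e. $\abs{w-x}\ge-t$; thus $\dist(x,\partial A_{s})\ge-t$ and $x\in(A_{s})_{t}$.

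For \textbf{(d)}: $A_{s}=\{x\in A\mid\dist(x,\partial A)\ge-s\}$ is closed, so $\partial A_{s}\subseteq A_{s}$; given $x\in\partial A_{s}$ we have $x\in\interior A$ (since $\dist(x,\partial A)\ge-s>0$), so points of $\R^{n}\setminus A_{s}$ near $x$ are eventually in $A$ and hence have $\dist(\cdot,\partial A)<-s$, forcing $\dist(x,\partial A)\le-s$ and equality. Conversely, if $x\in A$ with $\dist(x,\partial A)=-s$, pick $z\in\partial A$ with $\abs{x-z}=-s$; points on $[x,z]$ just past $x$ have distance $<-s$ to $\partial A$, so do not lie in $A_{s}$, yet converge to $x\in A_{s}$, so $x\in\partial A_{s}$. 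For \textbf{(e)} I will use \textbf{(d)} to prove that $\dist(x,\partial A_{s})=\dist(x,\partial A)+s$ for every $x\in A_{s}$: ``$\ge$'' follows from $\dist(x,\partial A)\le\abs{x-w}+\dist(w,\partial A)=\abs{x-w}-s$ applied to $w\in\partial A_{s}$; for ``$\le$'' I retract a nearest point $z\in\partial A$ of $x$ (with $d\vcentcolon=\dist(x,\partial A)=\abs{x-z}\ge-s$) to $y\vcentcolon=x+\tfrac{d+s}{d}(z-x)$ and check that $y\in\partial A_{s}$ with $\abs{x-y}=d+s$. Plugging this identity into the defining condition, $x\in(A_{s})_{t}$ is equivalent to $x\in A$ and $\dist(x,\partial A)\ge-(s+t)$, i.e. to $x\in A_{s+t}$, since $-(s+t)\ge-s$ makes the constraint ``$x\in A_{s}$'' automatic. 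Finally \textbf{(f)} (with $s+t\le 0$) is cleanest via the contrapositive: if $x\notin A$ then for every $y\in A_{s}\subseteq A$ the segment $[x,y]$ meets $\partial A$ at a point $w$ with $\abs{x-w}\ge\dist(x,A)>0$ and $\abs{w-y}\ge\dist(y,\partial A)\ge-s$, so $\abs{x-y}=\abs{x-w}+\abs{w-y}\ge\dist(x,A)-s>-s\ge t$ and thus $\dist(x,A_{s})>t$; and if $x\in A$ with $\dist(x,\partial A)<-(s+t)$ then $\dist(x,A_{s})\le t$ would give, for a nearest $y\in A_{s}$, the estimate $\dist(x,\partial A)\ge\dist(y,\partial A)-\abs{x-y}\ge-s-t=-(s+t)$, a contradiction; in either case $x\notin(A_{s})_{t}$.

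I expect the main obstacle to be the ``$\le$'' half of the identity $\dist(x,\partial A_{s})=\dist(x,\partial A)+s$ inside \textbf{(e)}: one must verify that the radially retracted point $y$ really lies on $\partial A_{s}$, which requires both that $y\in A$ — not automatic, since $A$ is not assumed convex and the segment $[x,z]$ could a priori leave $A$ — and that $\dist(y,\partial A)$ equals exactly $-s$ rather than being larger. The first point follows from a connectedness argument: the half-open segment from $x\in\interior A$ towards the nearest boundary point $z$ decomposes, away from $\partial A$, into an open ``interior'' part and an open ``exterior'' part, hence it is entirely interior until it first meets $\partial A$, and the nearest-point property $\abs{x-z}=\dist(x,\partial A)$ then forces that first meeting to occur only at $z$ itself, so the whole open segment is interior; the second point is immediate, since a point of $\partial A$ strictly closer than $-s$ to $y$ would be strictly closer than $d$ to $x$. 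Beyond this, the only care needed is to dispose of the degenerate cases where $A_{s}$ is empty (so that $(A_{s})_{t}=\emptyset$ and every inclusion is trivial) or where $\partial A_{s}=\emptyset$ (so that the inner-parallel condition is vacuous), and to note $A_{0}=A$.
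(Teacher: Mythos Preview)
Your proof is correct and uses essentially the same elementary ingredients as the paper --- Lipschitz continuity of the distance function, closedness of parallel sets, and radial retraction along segments to nearest (boundary) points --- organised in the same item-by-item order. The differences are organisational rather than substantive: in \textbf{(e)} you package the argument as the clean identity $\dist(x,\partial A_{s})=\dist(x,\partial A)+s$ for $x\in A_{s}$, whereas the paper argues the two inclusions separately via Lemma~\ref{projectionproperties}; your route is slightly more explicit about why the retracted point lies in $A$ (the connectedness argument), a step the paper leaves implicit. In \textbf{(f)} you run a contrapositive in two cases and in particular justify why $x\in(A_{s})_{t}$ forces $x\in A$, which the paper simply asserts; this makes your argument the more complete of the two.
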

\begin{proof}
	\textbf{(a)}
		Let $s>0$ and $x\in \partial [A_{s}]$. As $\dist(\cdot, A)$ is continuous, the set $A_{s}$ is closed and $\dist(x, A)\leq s$. 
		Therefore, for every $\epsilon>0$ there are points $y\in B_{\epsilon}(x)$ with $\dist(y,A)>s$. Hence, $x\not\in A$ and $\dist(x,A)=\dist(x,\partial A)=s$.\\
	\textbf{(b)}
		For $s=0$ or $t=0$ the equality is evident.
		Let $s,t> 0$. Then $A_{s}\subset A_{s+t}$ and for $x\in (A_{s})_{t}\backslash A_{s}$ we have
		\begin{align*}
			\dist(x,A)\leq \dist(x,\partial [A_{s}])+\dist(\partial[A_{s}],A)\leq t+s
		\end{align*}
		and hence $x\in A_{s+t}$. Clearly $A_{s}\subset (A_{s})_{t}$, therefore let $x\in A_{s+t}\backslash A_{s}$. Then there is $y\in \tilde\xi_{\partial A}(x)$ and there is $t_{0}\in [0,1)$, such that $\abs{z-y}=s$ for
		$z=y+t_{0}(x-y)$. Considering Lemma \ref{projectionproperties} we know that $z\in A_{s}$ and additionally we have
		\begin{align*}
			\abs{x-y}=\abs{x-z}+\abs{z-y}=\abs{x-z}+s\leq t+s,
		\end{align*}
		note that $x,y$ and $z$ are on a straight line with $z$ between $x$ and $y$. This means $\abs{x-z}\leq t$ and hence $x\in (A_{s})_{t}$.\\
	\textbf{(c)}
		Let $s\geq 0$, $-s\leq t\leq 0$ and $x\in A_{s+t}$. Then $x\in A_{s}$ and
		\begin{align*}
			-\dist(x,\partial [A_{s}])+s=-\dist(x,\partial [A_{s}])+\dist(\partial [A_{s}],A)\leq \dist(x,A)\leq s+t
		\end{align*}
		and hence $\dist(x,\partial[A_{s}])\geq -t$, i.e. $x\in (A_{s})_{t}$.\\
	\textbf{(d)}
		Let $s<0$ and $x\in \partial [A_{s}]$. As $\dist(\cdot,\partial A)$ is continuous the set $A_{s}$ is closed and $\dist(x,\partial A)\geq \abs{s}$. Then $x\in A_{s}$
		and for every $\epsilon>0$ there are points $y\in B_{\epsilon}(x)$ with $\dist(y,\partial A)<\abs{s}$. Hence $\dist(x,\partial A)=\abs{s}$.
		Now, let $x\in A$ with $\dist(x,\partial A)=\abs{s}$. Then $x\in A_{s}$. As $\partial A$ is closed there exists $a\in \tilde\xi_{\partial A}(x)$ and according to Lemma \ref{projectionproperties} we have
		$\dist(x_{t},\partial A)=t\abs{x-a}=t\abs{s}$ and hence $x_{t}\in \R^{n}\backslash A_{s}$ for $t\in (0,1)$. Consequently, $x\in \overline{\R^{n}\backslash A_{s}}$ and $x\in A_{s}$, therefore $x\in\partial [A_{s}]$.\\
	\textbf{(e)}
		For $s=0$ or $t=0$ the equality is evident.
		Let $s,t< 0$ and $x\in (A_{s})_{t}$. Then, as $\partial A$ is closed and non-empty, there is $y\in \tilde\xi_{\partial A}(x)$ and there is $t_{0}\in [0,1]$ such that $\abs{y-z}=\abs{s}$ for
		$z=y+t_{0}(x-y)$. Considering Lemma \ref{projectionproperties} we have
		$z\in A_{s}$. From $\abs{z-x}\geq \abs{t}$ we infer
		\begin{align*}
			\abs{s+t}=\abs{s}+\abs{t}\leq \abs{y-z}+\abs{z-x}=\abs{x-y}=\dist(x,\partial A),
		\end{align*}
		note that $x,y$ and $z$ are on a straight line with $z$ between $x$ and $y$. This means $x\in A_{s+t}$.
		Now let $x\in A_{s+t}$. Then $x\in A_{s}$ and
		\begin{align*}
			\abs{s+t}=\abs{s}+\abs{t}\leq\dist(x,\partial A)\leq \dist(x,\partial [A_{s}])+\dist(\partial[A_{s}],\partial A)
			=\dist(x,\partial [A_{s}])+\abs{s},
		\end{align*}
		by (d), so that $x\in (A_{s})_{t}$.\\
	\textbf{(f)}
		Let $s\leq 0$, $0\leq t\leq -s$ and $x\in (A_{s})_{t}$. Then $x\in \overline A$ and
		\begin{align*}
			-s-t\leq \dist(A_{s},\partial A)-\dist(x,A_{s})\leq \dist(x,\partial A),
		\end{align*}
		i.e. $x\in A_{s+t}$.
\end{proof}

The examples $\partial B_{1}(0)$, $\partial [0,1]^{2}$ and $[0,1]^{2}$ suffice to show that the inclusions in (a), (c) and (f), respectively, can be strict.

\begin{lemma}[(Alternative characterisation of $\reach$ II)]\label{alternativecharacterisationofreach2}
	Let $A\subset \R^{n}$ closed, $A\not\in\{\emptyset,\R^{n}\}$ and $r>0$. Then
	\begin{align}\label{alternativereachsemigroupproperty}
		\reach(A)\geq r\quad\Leftrightarrow\quad
		(A_{s})_{t}=A_{s+t}\text{ for all }s\in(0,r),\, t\in (-s,r-s).
	\end{align}
\end{lemma}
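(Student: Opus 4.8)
The plan is to prove the two implications separately. For ``$\reach(A)\geq r\ \Rightarrow$ semigroup identity'', I will fix $s\in(0,r)$ and $t\in(-s,r-s)$. When $t\geq 0$ the identity $(A_{s})_{t}=A_{s+t}$ is exactly Lemma \ref{propertiesofparallelsets}(b), so the only work is the case $t<0$, where $0<s+t<s<r$ and where Lemma \ref{propertiesofparallelsets}(c) already provides $A_{s+t}\subset(A_{s})_{t}$; hence only $(A_{s})_{t}\subset A_{s+t}$ remains. Given $x\in(A_{s})_{t}$ with $x\notin A$ (the case $x\in A$ being trivial since $s+t>0$), I set $\delta\vcentcolon=\dist(x,A)\leq s<r\leq\reach(A)$, so $x\in\Unp(A)$; with $a\vcentcolon=\xi_{A}(x)$ and \eqref{tracingthenormal}, the point $y\vcentcolon=a+s\frac{x-a}{\abs{x-a}}$ has $\dist(y,A)=s$ while points slightly further out along the same ray lie outside $A_{s}$, so $y\in\partial[A_{s}]$. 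Since $x$ and $y$ lie on the ray from $a$ at distances $\delta\leq s$, one gets $\abs{x-y}=s-\delta$, hence $\dist(x,\partial[A_{s}])\leq s-\delta$; combined with the defining inequality $\dist(x,\partial[A_{s}])\geq -t$ of $(A_{s})_{t}$ this yields $\delta\leq s+t$, i.e.\ $x\in A_{s+t}$.

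For the converse I will argue by contradiction: assume the semigroup identity holds on the stated range but $\reach(A)<r$. Since $\reach(A)=\inf_{a\in A}\reach(A,a)<r$, unwinding the definition of reach produces $a_{0}\in A$ and $\rho\in(0,r)$ such that $\tilde\xi_{A}$ is not singleton valued on $B_{\rho}(a_{0})$, hence a point $x\in B_{\rho}(a_{0})$ with $\#\tilde\xi_{A}(x)\geq 2$; then $\delta\vcentcolon=\dist(x,A)<r$ and, since $x$ would otherwise be its own unique nearest point, $\delta>0$. Fix any $s$ with $\delta<s<r$. The heart of the argument is the strict inequality $\dist(x,\partial[A_{s}])>s-\delta$ (read as $+\infty$ when $A_{s}=\R^{n}$). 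The bound ``$\geq$'' is immediate from $1$-Lipschitz continuity of $\dist(\cdot,A)$ together with Lemma \ref{propertiesofparallelsets}(a), which gives $\dist(y,A)=s$ for every $y\in\partial[A_{s}]$. If equality held, choosing $y\in\partial[A_{s}]$ realising $\dist(x,\partial[A_{s}])$ and reading off the equality case of $\dist(y,A)\leq\abs{y-c}\leq\abs{y-x}+\abs{x-c}$ for $c\in\tilde\xi_{A}(x)$ would force $x$ onto the segment $[y,c]$ with $\abs{x-c}=\delta$, which pins $c$ down uniquely, contradicting $\#\tilde\xi_{A}(x)\geq 2$. Granting the strict inequality, I choose $t$ with $s-\delta<-t<\min\{s,\dist(x,\partial[A_{s}])\}$; then $t\in(-s,0)\subset(-s,r-s)$ and $0<s+t<\delta$, so $x\in(A_{s})_{t}$ while $\dist(x,A)=\delta>s+t$ gives $x\notin A_{s+t}$, contradicting $(A_{s})_{t}=A_{s+t}$.

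I expect the main obstacle to be precisely this strict inequality $\dist(x,\partial[A_{s}])>s-\delta$ at a point $x$ possessing several nearest points in $A$: it is the geometric fact that such ``focal'' points sit strictly deeper inside $A_{s}$ than ordinary points at distance $\delta$ from $A$, and it is exactly the feature that a multivalued metric projection must betray. Two minor technical points also need attention: the degenerate case $A_{s}=\R^{n}$ (then $\partial[A_{s}]=\emptyset$ and one checks directly $(A_{s})_{t}=\R^{n}\neq A_{s+t}$), and, in the first implication, the borderline case $\delta=s$, in which $x\in\partial[A_{s}]$ and $x\notin(A_{s})_{t}$ for $t<0$, so there is nothing to prove.
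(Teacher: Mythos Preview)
Your forward implication is essentially identical to the paper's Step~1: split off $t\geq 0$ via Lemma~\ref{propertiesofparallelsets}(b), use Lemma~\ref{propertiesofparallelsets}(c) for one inclusion when $t<0$, and for the other inclusion project $x\in(A_{s})_{t}\setminus A$ to $a=\xi_{A}(x)$, push out along the normal ray to a point of $\partial[A_{s}]$, and read off $\dist(x,A)\leq s+t$.

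Your converse, however, is genuinely different from the paper's. The paper does not argue directly; it invokes Lemma~\ref{differencesetcontainsinnerpoints}, whose proof splits into two cases according to whether the focal point $x$ lies in $A_{t_{0}}^{\circ}$ or in $\partial[A_{t_{0}}]$, and in the second case performs an explicit computation with two tangent spheres to produce an \emph{interior} point of $(A_{\sigma+\tau})_{-\tau}\setminus A_{\sigma}$. You instead prove the sharper pointwise statement $\dist(x,\partial[A_{s}])>s-\delta$ at any $x$ with $\#\tilde\xi_{A}(x)\geq 2$, by observing that equality would force every $c\in\tilde\xi_{A}(x)$ to equal the single point $x+\delta\,\tfrac{x-y}{\abs{x-y}}$ for a fixed minimiser $y\in\partial[A_{s}]$. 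This is correct and more economical for the present lemma: it avoids the case split and the sphere computation entirely, and it yields a single witness $x\in(A_{s})_{t}\setminus A_{s+t}$, which is all the equivalence \eqref{alternativereachsemigroupproperty} requires. What the paper's route buys is the stronger conclusion that the difference set has \emph{positive measure}; this is not needed here, but it is precisely what is used later in the proof of Theorem~\ref{positivereachandsteinerformula} (and Theorem~\ref{theoremsteinerformulaiffpoasitivereach}), where one compares volumes rather than sets. So your argument is a clean shortcut for this lemma, while the paper's detour through Lemma~\ref{differencesetcontainsinnerpoints} is an investment that pays off in the Steiner-formula theorems.
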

\begin{proof}
	\textbf{Step 1}
		Let $\reach(A)\geq r$. Let $s\in (0,r)$. For $t=0$ nothing needs to be shown.
		Let $t\in(0,r-s)$. We then always have $(A_{s})_{t}=A_{s+t}$, see Lemma \ref{propertiesofparallelsets} (b). 
		Let $s\in (0,r)$, $t\in (-s,0)$, then by Lemma \ref{propertiesofparallelsets} (c) we always have $A_{s+t}\subset (A_{s})_{t}$.
		For $x\in A$ we automatically have $x\in A_{s+t}$, so let $x\in (A_{s})_{t}\backslash A$. 
		As $\reach(A)\geq r$ we find a unique $y=\xi_{A}(x)$ and by \eqref{tracingthenormal}
		we additionally know $\dist(x_{u},A)=\abs{x_{u}-y}=u$ for $x_{u}\vcentcolon=y+u(x-y)/\abs{x-y}$, $u<r$. Then $x_{s}\in \partial [A_{s}]$, because $x_{u}\in A_{s}$ for $0\leq u \leq s$ 
		and $x_{u}\in\R^{n}\backslash A_{s}$ for $s<u<r$, so that 
		$\abs{x-x_{s}}\geq -t$, $\dist(x,A)=\abs{x-y}<s$ and hence
		\begin{align*}
			\dist(x,A)=\abs{x-y}=\abs{x_{s}-y}-\abs{x_{s}-x}\leq s+t,
		\end{align*}
		note that $y,x$ and $x_{s}$ are on a straight line with $x$ between $y$ and $x_{s}$. Hence $x\in A_{s+t}$.\\
	\textbf{Step 2}
		The other direction is a the contrapositive of Lemma \ref{differencesetcontainsinnerpoints} if we put $s=\sigma+\tau$ and $t=-\tau$.
\end{proof}

\begin{lemma}[(If $\reach(A)<r$ then $(A_{\sigma+\tau})_{-\tau}\backslash A_{\sigma}$ contains an inner point)]\label{differencesetcontainsinnerpoints}
	Let $A\subset \R^{n}$ be closed, $A\not\in\{\emptyset,\R^{n}\}$ and $\reach(A)<r$. Then there are $\sigma\in (0,r)$, $\tau\in (0,r-\sigma)$ such that $(A_{\sigma+\tau})_{-\tau}\backslash A_{\sigma}$
	contains an inner point.
\end{lemma}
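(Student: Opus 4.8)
The plan is to produce, from the hypothesis $\reach(A)<r$, a single point $x\notin A$ that can be shown to lie in the interior of $(A_{\sigma+\tau})_{-\tau}\setminus A_{\sigma}$ for a suitable choice of $\sigma$ and $\tau$. First I would unwind the definition of reach: $\reach(A)\geq r$ is equivalent to $\{y\in\R^{n}\mid\dist(y,A)<r\}\subset\Unp(A)$, so from $\reach(A)<r$ we obtain a point $y$ with $\dist(y,A)<r$ and $y\notin\Unp(A)$. Since every point of $A$ has itself as unique nearest point, such a $y$ must lie outside $A$; call it $x$ and set $d\vcentcolon=\dist(x,A)\in(0,r)$. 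By construction there are two distinct points $b\neq c$ in $\tilde\xi_{A}(x)$, so $\abs{x-b}=\abs{x-c}=d$.

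The geometric heart of the argument is the estimate that for every $R\in(d,r)$ one has $\dist(x,\partial A_{R})>R-d$, \emph{strictly}. The non-strict bound $\dist(x,\partial A_{R})\geq R-d$ is immediate, since $\dist(\cdot,A)$ is $1$-Lipschitz and, by Lemma \ref{propertiesofparallelsets}(a), every $w\in\partial A_{R}$ satisfies $\dist(w,A)=R$. For strictness I would argue by contradiction: if $\dist(x,\partial A_{R})=R-d$, pick $w\in\partial A_{R}$ realizing this distance. Then $\overline{B_{d}(x)}\subset\overline{B_{R}(w)}$, and the equality case of the triangle inequality pins down any common point of the bounding spheres $\partial B_{d}(x)$ and $\partial B_{R}(w)$ uniquely. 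But $b$ and $c$ lie on $\partial B_{d}(x)\subset\overline{B_{R}(w)}$, while $\dist(w,A)=R$ forces $\abs{w-b}\geq R$ and $\abs{w-c}\geq R$; hence both $b$ and $c$ lie on $\partial B_{d}(x)\cap\partial B_{R}(w)$, so $b=c$, a contradiction.

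Granting this, I would fix any $R\in(d,r)$, choose $\sigma$ in the interval $\bigl(\max\{0,R-\dist(x,\partial A_{R})\},\,d\bigr)$ — which is non-empty \emph{precisely because} $\dist(x,\partial A_{R})>R-d$ — and put $\tau\vcentcolon=R-\sigma$. The required constraints $\sigma\in(0,r)$ and $\tau\in(0,r-\sigma)$ then follow from $0<\sigma<d<R<r$. By the choice of $\sigma$ we have $\dist(x,\partial A_{R})>R-\sigma=\tau$ and $\dist(x,A)=d>\sigma$; continuity of $\dist(\cdot,\partial A_{R})$ and of $\dist(\cdot,A)$ therefore yields a ball $B_{\epsilon}(x)$ contained in $\{y\mid\dist(y,A)\leq R\}\cap\{y\mid\dist(y,\partial A_{R})\geq\tau\}=(A_{R})_{-\tau}=(A_{\sigma+\tau})_{-\tau}$ and disjoint from $A_{\sigma}$. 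Thus $x$ is an interior point of $(A_{\sigma+\tau})_{-\tau}\setminus A_{\sigma}$.

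The one case not covered by the above is the degenerate one $\partial A_{R}=\emptyset$ (possible for unbounded $A$, e.g.\ a pair of parallel hyperplanes), where $A_{R}=\R^{n}$ and hence $(A_{R})_{-\tau}=\R^{n}$; there one simply takes any $\sigma\in(0,d)$, $\tau\vcentcolon=R-\sigma$, and notes that $(A_{\sigma+\tau})_{-\tau}\setminus A_{\sigma}=\R^{n}\setminus A_{\sigma}$ is open and non-empty since $x\notin A_{\sigma}$. I expect the main obstacle to be getting the strict inequality $\dist(x,\partial A_{R})>R-d$ with the right logical strength: it is tempting to think one needs a quantitative gain depending on $\abs{b-c}$, but the bare strictness is exactly what makes the admissible interval for $\sigma$ non-empty, and nothing more is required.
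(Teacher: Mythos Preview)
Your argument is correct and takes a genuinely different route from the paper's. The paper locates the same point $x$ with $d\vcentcolon=\dist(x,A)=t_{0}$ and two nearest points, but then splits into two cases according to whether $x\in A_{t_{0}}^{\circ}$ or $x\in\partial A_{t_{0}}$. In the first case the paper uses $\dist(x,\partial A_{t_{0}})>0$ directly; in the second it passes to $A_{t_{0}+\epsilon}$ and performs an explicit coordinate computation (placing the two nearest points at $\pm a e_{1}$ and $x$ at $b e_{2}$) to verify that the two candidate boundary points of $\partial B_{\epsilon}(x)$ fall strictly inside the other ball, whence $\dist(x,\partial A_{t_{0}+\epsilon})>\epsilon$.

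Your single inequality $\dist(x,\partial A_{R})>R-d$ for every $R\in(d,r)$, proved via the internally-tangent-spheres observation that $\partial B_{d}(x)\cap\partial B_{R}(w)$ is a singleton when $\abs{x-w}=R-d$, replaces both cases at once and avoids the coordinate computation entirely; this is cleaner and slightly more conceptual. The trade-off is that you must separately treat the possibility $\partial A_{R}=\emptyset$, which the paper's case split sidesteps implicitly. One minor remark: your illustrative example of two parallel hyperplanes does not actually give $A_{R}=\R^{n}$ (points far outside the slab still have large distance to $A$); a genuine instance would be, say, $A=\Z\subset\R$ with $R>\tfrac{1}{2}$. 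This does not affect the validity of your handling of that case.
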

\begin{proof}
	Let $\reach(A)<r$. Then there is $x\in A_{u}\backslash A$ for some $u\in(0,r)$ and $y,z\in A$, $y\not=z$ with $y,z\in\tilde\xi_{A}(x)$. Let $\abs{x-y}=\abs{x-z}=\vcentcolon t_{0}$ then $0<t_{0}<r$.\\
	\textbf{Case 1}
		Let $x\in A_{t_{0}}^{\circ}.$\footnote{At first glance it might seem rather strange that $\dist(x,A)=t_{0}$ and $x\in A_{t_{0}}^{\circ}$, but it is seen easily that this is indeed possible, for example for 
		$A=\partial B_{1}(0)$, $x=0$ and $t_{0}=1$.} 
		Then $\dist(x,\partial [A_{t_{0}}])>0$ and $B_{\dist(x,\partial[A_{t_{0}}])+\epsilon}(x)\subset A_{t_{0}+\epsilon}$ for all $\epsilon>0$. 
		Choose $0<\epsilon<r-t_{0}$ and $0<\delta<\min\{2^{-1}\dist(x,\partial[ A_{t_{0}}]), 2^{-1}t_{0}\}$. Then $B_{\delta}(x)\subset (A_{t_{0}+\epsilon})_{-(\epsilon+\delta)}$ and 
		for all $w\in B_{\delta}(x)$ holds
		\begin{align*}
			\dist(w,A)\geq \dist(x,A)-\abs{x-w}=t_{0}-\abs{x-w}>t_{0}-\delta
		\end{align*}
		so that $B_{\delta}(x)\cap A_{t_{0}-\delta}=\emptyset$. Hence, $x$ is an inner point of $(A_{t_{0}+\epsilon})_{-(\epsilon+\delta)}\backslash A_{t_{0}-\delta}$, 
		i.e. the proposition holds for $\sigma=t_{0}-\delta$ and $\tau=\epsilon+\delta$.\\
	\textbf{Case 2}
		Let $x\in \partial [A_{t_{0}}]$.
		Without loss of generality we might assume that $y=-a e_{1}$, $z=ae_{1}$ and $x=be_{2}$ with $t_{0}^{2}=a^{2}+b^{2}$ and $a>0$. Let $\epsilon\in (0,\min\{r-t_{0},t_{0}\})$.
		Then $B_{\epsilon}(x)\subset (B_{t_{0}+\epsilon}(y)\cap B_{t_{0}+\epsilon}(z))$ and the only elements of $\partial B_{t_{0}+\epsilon}(y)\cap \partial B_{\epsilon}(x)$ 
		and $\partial B_{t_{0}+\epsilon}(z)\cap \partial B_{\epsilon}(x)$ are $x+\epsilon (x-y)/t_{0}$ and $x+\epsilon (x-z)/t_{0}$, respectively. If these two points do not belong
		to $\partial [A_{t_{0}+\epsilon}]$ then $\dist(x,\partial [A_{t_{0}+\epsilon}])>\epsilon$. Now,
		\begin{align*}
			\MoveEqLeft
			\big|x+\epsilon \frac{x-y}{t_{0}}-z\big|^{2}
			=\big| (1+\epsilon/t_{0})be_{2}-(1-\epsilon/t_{0})ae_{1}\big|^{2}\\
			&=(1+\epsilon/t_{0})^{2}b^{2}+(1-\epsilon/t_{0})^{2}a^{2}
			=(1+\epsilon/t_{0})^{2}(a^{2}+b^{2})-4\epsilon a^{2}/t_{0}\\
			&=(t_{0}+\epsilon)^{2}-4\epsilon a^{2}/t_{0}<(t_{0}+\epsilon)^{2},
		\end{align*}
		and hence $x+\epsilon (x-y)/t_{0}\in B_{t_{0}+\epsilon}(z)$ and, by interchanging $y$ and $z$ we obtain $x+\epsilon (x-z)/t_{0}\in B_{t_{0}+\epsilon}(y)$. 
		Hence, we have shown that $x$ lies in the interior of $(A_{t_{0}+\epsilon})_{-\epsilon}$.
		This means that there is $\delta>0$ such that $B_{\delta}(x)\subset (A_{t_{0}+\epsilon})_{-\epsilon}$.
		Now, $B_{\delta}(x)\backslash A_{t_{0}}$ is open and non-empty, 
		as $x\in \partial[A_{t_{0}}]$,\footnote{Note, that we had to distinguish the different cases, because we need $B_{\delta}(x)\backslash A_{t_{0}}$ to be non-empty.} 
		so that there must be $w\in B_{\delta}(x)\backslash A_{t_{0}}$ and 
		$\delta'>0$ with $B_{\delta'}(w)\subset B_{\delta}(x)\backslash A_{t_{0}}$. Therefore $w$ is an inner point of $(A_{t_{0}+\epsilon})_{-\epsilon}\backslash A_{t_{0}}$.
		That is, we have shown the proposition for $\sigma=t_{0}$ and $\tau=\epsilon$.
\end{proof}

\begin{lemma}[(If $\reach(\overline{\R^{n}\backslash A})<r$ then  $A_{-\sigma}\backslash (A_{-(\sigma+\tau)})_{\tau}$ contains an inner point)]\label{differencesetcontainsinnerpoints2}
	Let $A\subset \R^{n}$ be closed, $A\not\in\{\emptyset,\R^{n}\}$ and $\reach(\overline{\R^{n}\backslash A})<r$. Then there are $\sigma\in (0,r)$, $\tau\in (0,r-\sigma)$ 
	such that $A_{-\sigma}\backslash (A_{-(\sigma+\tau)})_{\tau}$ contains an inner point.
\end{lemma}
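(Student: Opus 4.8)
The statement is the "dual" version of Lemma \ref{differencesetcontainsinnerpoints}, with $A$ replaced by its complement $\overline{\R^{n}\backslash A}$ and with inner/outer parallel sets swapped. The natural approach is to reduce directly to Lemma \ref{differencesetcontainsinnerpoints} via the duality between inner and outer parallel sets. First I would record the elementary facts relating parallel sets of $A$ to parallel sets of $B\vcentcolon=\overline{\R^{n}\backslash A}$: for $s\geq 0$ one has $A_{-s}=\R^{n}\backslash (B_{s})^{\circ}$ (the points of $A$ at distance $\geq s$ from $\partial A$ are exactly those not strictly within $s$ of $B$), and conversely for $s\geq 0$, $B_{s}$ and $A_{-s}$ share the boundary $\partial A_{-s}=\partial B_{s}=\{x\mid \dist(x,\partial A)=s\}$ up to the distinction between the set and its closure/interior. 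The key point is that $\partial A=\partial B$, so "$\dist(\cdot,\partial A)$" and "$\dist(\cdot,\partial B)$" are literally the same function, and the map $s\mapsto A_{-s}$ on $A$ mirrors $s\mapsto B_{s}$ on $B$.

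**Main step.** Apply Lemma \ref{differencesetcontainsinnerpoints} to the closed set $B=\overline{\R^{n}\backslash A}$ (which is again not $\emptyset$ or $\R^{n}$, since $\partial A\neq\emptyset$): since $\reach(B)<r$ by hypothesis, there are $\sigma\in(0,r)$, $\tau\in(0,r-\sigma)$ such that $(B_{\sigma+\tau})_{-\tau}\backslash B_{\sigma}$ contains an interior point $x$. So there is $\rho>0$ with $B_{\rho}(x)\subset (B_{\sigma+\tau})_{-\tau}$ and $B_{\rho}(x)\cap B_{\sigma}=\emptyset$. I then want to translate each of these two conditions into conditions about $A$. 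The second one is easy: $B_{\rho}(x)\cap B_{\sigma}=\emptyset$ means $\dist(y,\partial A)>\sigma$ for all $y\in B_{\rho}(x)$ (using $B_{\rho}(x)\cap A^{\circ}$-type considerations — $x\notin B_{\sigma}$ forces $x\in A$ and away from $\partial A$), which after possibly shrinking $\rho$ gives $B_{\rho}(x)\subset A_{-\sigma}^{\circ}$. The first condition, $B_{\rho}(x)\subset (B_{\sigma+\tau})_{-\tau}$, says every $y\in B_{\rho}(x)$ has $\dist(y,\partial B_{\sigma+\tau})\geq\tau$; using Lemma \ref{propertiesofparallelsets}(d) to identify $\partial B_{\sigma+\tau}=\{z\mid\dist(z,\partial A)=\sigma+\tau\}=\partial A_{-(\sigma+\tau)}$ together with $y\in A_{-\sigma}\subset A_{-(\sigma+\tau)}^{c}$... — more precisely, I claim this is exactly the statement $y\notin (A_{-(\sigma+\tau)})_{\tau}$, because a point within $\tau$ of $A_{-(\sigma+\tau)}$ would be within $\tau$ of $\partial A_{-(\sigma+\tau)}=\partial B_{\sigma+\tau}$, contradicting $\dist(y,\partial B_{\sigma+\tau})\geq\tau$ unless $y$ lies deep inside, which is excluded since $y\in B_{\rho}(x)\cap A_{-\sigma}$ and $A_{-\sigma}\cap A_{-(\sigma+\tau)}^{\circ}=\emptyset$. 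Combining, $B_{\rho}(x)\subset A_{-\sigma}^{\circ}\cap \big(\R^{n}\backslash (A_{-(\sigma+\tau)})_{\tau}\big)$, so $x$ is an interior point of $A_{-\sigma}\backslash (A_{-(\sigma+\tau)})_{\tau}$, as desired.

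**Main obstacle.** The delicate part is the careful bookkeeping in passing between "$\dist(\cdot,\partial B_{\sigma+\tau})$" and "inside vs.\ outside of $(A_{-(\sigma+\tau)})_{\tau}$", since $B_{\sigma+\tau}$ and $A_{-(\sigma+\tau)}$ agree only up to closure (one is $\{\dist(\cdot,\partial A)\leq\sigma+\tau\}$-type for $B$ viewed from $B$, the other $\{\dist(\cdot,\partial A)\geq\sigma+\tau\}$ for $A$), and the naive translation "$(B_s)_{-t}$ corresponds to $(A_{-s})_t$" can fail on the boundary where the two parallel sets differ. One must verify that the interior point $x$ produced by Lemma \ref{differencesetcontainsinnerpoints} can be taken genuinely in the interior of $B$ (equivalently, strictly inside $A_{-\sigma}$), so that these boundary ambiguities do not bite; inspecting the proof of Lemma \ref{differencesetcontainsinnerpoints}, in both Case 1 and Case 2 the point is an honest interior point of the difference set and in particular lies in the interior of $B_{\sigma}^{c}=A_{-\sigma}$, so this causes no trouble. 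An alternative, perhaps cleaner, route is to redo the two-case argument of Lemma \ref{differencesetcontainsinnerpoints} verbatim but with inner parallel sets of $A$ in place of outer parallel sets of $B$ — the geometry (two distinct footpoints on $\partial A$ realizing $\reach(B)<r$, then thickening a ball near $x$) is identical — but reducing to the already-proven lemma via complementation is shorter and I would present that.
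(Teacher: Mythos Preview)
Your complementation idea is natural, but the translation step has a real gap. You invoke Lemma~\ref{propertiesofparallelsets}(d) to identify $\partial[B_{\sigma+\tau}]=\{z\mid\dist(z,\partial A)=\sigma+\tau\}=\partial[A_{-(\sigma+\tau)}]$, yet $B_{\sigma+\tau}$ is an \emph{outer} parallel set of $B$, so only item (a) applies and that gives merely the inclusion $\partial[B_{\sigma+\tau}]\subset\{z\notin B\mid\dist(z,\partial A)=\sigma+\tau\}$. In general $\partial[B_{\sigma+\tau}]\subsetneq\partial[A_{-(\sigma+\tau)}]$: whenever $A_{-(\sigma+\tau)}$ has a boundary point that is not a limit of interior points (think of $A=\overline{B_{1}(0)}$ at level $\sigma+\tau=1$, where $\partial[A_{-1}]=\{0\}$ but $\partial[B_{1}]=\emptyset$), your identification fails. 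Consequently the implication ``$y$ within $\tau$ of $A_{-(\sigma+\tau)}$ $\Rightarrow$ $y$ within $\tau$ of $\partial[B_{\sigma+\tau}]$'' is unjustified, and with it the conclusion $B_{\rho}(x)\cap (A_{-(\sigma+\tau)})_{\tau}=\emptyset$. Two of your auxiliary set relations are also backwards: since $A_{-(\sigma+\tau)}\subset A_{-\sigma}$, neither $A_{-\sigma}\subset (A_{-(\sigma+\tau)})^{c}$ nor $A_{-\sigma}\cap (A_{-(\sigma+\tau)})^{\circ}=\emptyset$ holds.

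What actually closes the argument is the two-ball geometry you only allude to: for the specific $x$ with two distinct footpoints $y,z\in\partial A$ one checks directly that $\overline{B_{\epsilon}(x)}\subset\{p\mid\dist(p,\partial A)<t_{0}+\epsilon\}$, which gives $\dist(x,A_{-(t_{0}+\epsilon)})>\epsilon$ straight away, \emph{without} passing through $\partial[B_{t_{0}+\epsilon}]$. That is precisely the paper's route: it redoes the geometric estimate of Case~2 from Lemma~\ref{differencesetcontainsinnerpoints} for the inner parallel sets of $A$, obtaining $\dist(x,A_{-(t_{0}+\epsilon)})>\epsilon$ and then choosing $\delta$ so that $B_{\delta}(x)\subset A_{-(t_{0}-\delta)}$ and $B_{\delta}(x)\cap(A_{-(t_{0}+\epsilon)})_{\epsilon+\delta}=\emptyset$. (No case distinction is needed here because, by item (d), $x$ always lies on $\partial[A_{-t_{0}}]$.) Your ``alternative route'' is therefore the correct one; the complementation shortcut cannot be made to work by citing only the \emph{statement} of Lemma~\ref{differencesetcontainsinnerpoints}.
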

\begin{proof}
	Let $\reach(\overline{\R^{n}\backslash A})<r$. Then there is $x\in (\overline{\R^{n}\backslash A})_{u}\backslash \overline{\R^{n}\backslash A}\subset A$ for some $u\in(0,r)$ 
	and $y,z\in \partial A$, $y\not=z$ with $y,z\in\tilde\xi_{\partial A}(x)$. Let $\abs{x-y}=\abs{x-z}=\vcentcolon t_{0}$ then $0<t_{0}<r$. 
	Hence, $x$ is an inner point of $A_{-(t_{0}-\delta)}$ for $\delta\in (0,t_{0})$ and consequently $B_{\delta}(x)\subset A_{-(t_{0}-\delta)}$, since
	\begin{align*}
		\dist(w,\partial A)\geq \dist(x,\partial A)-\abs{x-w}\geq t_{0}-\delta
	\end{align*}
	holds for all $w\in B_{\delta}(x)$.
	In the same manner as in Lemma \ref{differencesetcontainsinnerpoints} Case 2 we can show  that for every small enough $\epsilon>0$ we have
	$\dist(x,A_{-(t_{0}+\epsilon)})>\epsilon$. Now, fix $\delta= \min\{\frac{\dist(x,A_{-(t_{0}+\epsilon)})-\epsilon}{3},\frac{t_{0}}{2}\}$, i.e. especially $\epsilon+3\delta\leq \dist(x,A_{-(t_{0}+\epsilon)})$. Then 
	\begin{align*}
		\dist(w,A_{-(t_{0}+\epsilon)})\geq \dist(x,A_{-(t_{0}+\epsilon)})-\abs{x-w}
		\geq \epsilon+2\delta
	\end{align*}
	holds for all $w\in B_{\delta}(x)$. This means we have
	\begin{align*}
		w\not\in (A_{-(t_{0}+\epsilon)})_{\delta+\epsilon}=(A_{-(t_{0}-\delta+\delta+\epsilon)})_{\delta+\epsilon}
	\end{align*}
	for all $w\in B_{\delta}(x)$, or in other words $x$ is an inner point of $A_{-(t_{0}-\delta)}\backslash(A_{-(t_{0}-\delta+\delta+\epsilon)})_{\delta+\epsilon}$ and thus
	we have proven the proposition for $\sigma=t_{0}-\delta$ and $\tau=\delta+\epsilon$.
\end{proof}

\subsection{Closed hypersurfaces of positive reach are $C^{1,1}$ manifolds}

\begin{proposition}[(Normal cones of closed hypersurfaces of positive reach are lines)]\label{uniquenormalcones}
	Let $A$ be a closed hypersurface in $\R^{n}$ and $\reach(A)>0$. Then for $a\in A$ there is a direction $s\in \mathbb{S}^{n-1}$ such that
	\begin{align}\label{definitionouternormal}
		\Nor_{a}A=\R s\quad\text{and}\quad
		\Nor_{a}\overline{\interior(A)}=[0,\infty)s.
	\end{align}
\end{proposition}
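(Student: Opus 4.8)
The plan is to exploit the Jordan--Brouwer separation together with positive reach to show that, at each $a\in A$, the tangent cone $\Tan_a A$ is a hyperplane, whence the normal cone $\Nor_a A=\dual(\Tan_a A)$ is a line, and then to pin down the orientation of $\Nor_a\overline{\interior(A)}$. First I would apply Lemma \ref{projectionissurjectiveonUbackslashA} twice, once to $B\vcentcolon=\overline{\interior(A)}$ and once to $C\vcentcolon=\overline{\exterior(A)}$: by \eqref{reachAminreachextint} both sets have positive reach at $a$, so there exist $v\in\Unp(B)\setminus\{a\}$ with $\xi_B(v)=a$ and $w\in\Unp(C)\setminus\{a\}$ with $\xi_C(w)=a$. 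By \cite[4.8 Theorem (2)]{Federer1959a} we get $v-a\in\Nor_a B$ and $w-a\in\Nor_a C$, so both normal cones are nontrivial. Since $\partial A=A$ separates $\R^n$ locally into the two sides, a point moving from $a$ a small distance into $\interior(A)$ along $v-a$ and a point moving into $\exterior(A)$ along $w-a$ must lie on opposite sides, which forces the directions $(v-a)/|v-a|$ and $(w-a)/|w-a|$ to be distinct; combining this with $\Tan_a B\cup\Tan_a C=\R^n$ and the fact that $\Nor_a B$, $\Nor_a C$ are the dual (hence polar) cones, I would deduce $\Nor_a B=-\Nor_a C$ and that each is exactly a ray.

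Concretely, the key step is: $\Tan_a B$ and $\Tan_a C$ are closed half-spaces with common boundary hyperplane $H$, and $\Tan_a A=\Tan_a B\cup\Tan_a C=\R^n$. To see the half-space claim I would argue that $\Nor_a B\neq\{0\}$ already gives that $\Tan_a B$ is contained in a closed half-space $\{x:\langle x,s\rangle\le 0\}$ for some $s\in\Sphere^{n-1}$; on the other hand, since $a\in\partial B=A$ is a boundary point of a full-dimensional set bounded by the topological sphere $A$, the set $B$ cannot be ``thin'' at $a$ — using that $A$ is locally a topological $(n-1)$-sphere separating a neighbourhood of $a$, the interior $\interior(A)$ approaches $a$ from a full half-space worth of directions, so $\Tan_a B\supseteq\{x:\langle x,s\rangle\le 0\}$, giving equality. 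Symmetrically $\Tan_a C=\{x:\langle x,s\rangle\ge 0\}$, so $\Nor_a B=\dual(\Tan_a B)=[0,\infty)s$ and $\Nor_a C=(-\infty,0]s$, and finally $\Nor_a A=\dual(\Tan_a A)=\dual(\R^n)=\{0\}$?

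That last line shows the subtlety I must be careful about: taken literally $\Tan_a A=\R^n$ would force $\Nor_a A=\{0\}$, contradicting \eqref{definitionouternormal}. The resolution is that for the \emph{surface} $A$ one should not expect $\Tan_a A$ to be all of $\R^n$: positive reach of the hypersurface $A$ itself (not just of the two solid bodies) is a genuinely stronger hypothesis, and it forces $\Tan_a A$ to be the hyperplane $H=\{x:\langle x,s\rangle=0\}$ rather than the union of the two half-spaces. So the real argument runs the other way: assume $\reach(A)>0$; pick $v\in\Unp(A)\setminus\{a\}$ with $\xi_A(v)=a$ from Lemma \ref{projectionissurjectiveonUbackslashA}; then $v-a\in\Nor_a A$, and I claim $-(v-a)\in\Nor_a A$ as well. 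Indeed, by \eqref{tracingthenormal} the whole open segment from $a$ in the direction $v-a$ projects to $a$, and because $A$ separates space near $a$, this ray lies (for small parameter) in one of the two complementary regions, say $\exterior(A)$; applying Lemma \ref{projectionissurjectiveonUbackslashA} to $\overline{\interior(A)}$ and using that its normal cone at $a$ is a sub-cone of $\Nor_a A$ extended appropriately, together with $\reach(A)\le\reach(\overline{\interior(A)})$ from \eqref{reachAminreachextint}, produces a point on the \emph{other} side projecting to $a$, i.e. a direction $-s'\in\Nor_a A$. Since $\Nor_a A$ is a convex cone containing both $s$ and $-s'$ with $s,s'$ both unit, and since $A$ being a $C^0$ hypersurface of positive reach cannot have a normal cone of dimension $\ge 2$ at a boundary point separating two regions (otherwise two nearby normals from the two sides would meet inside $A_t$ for small $t$, contradicting Lemma \ref{alternativecharacterisationreach}), I conclude $s'=s$ and $\Nor_a A=\R s$; correspondingly $\Nor_a\overline{\interior(A)}$ is the half of this line pointing away from $\interior(A)$, namely $[0,\infty)s$ after orienting $s$ outward.

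The main obstacle is precisely this dimension-reduction step: ruling out that $\Nor_a A$ is a higher-dimensional convex cone. I expect to handle it via Lemma \ref{alternativecharacterisationreach}: if $\dim\Nor_a A\ge 2$, then there are two unit directions $u_1\neq u_2$ in $\Nor_a A$, and by \eqref{tracingthenormal} both $a+tu_1$ and $a+tu_2$ project to $a$ for all $t<\reach(A)$; on the side of $A$ where, say, $u_1$ points, one of these line segments runs through $\overline{\interior(A)}$ and one stays in $\overline{\exterior(A)}$ only if $u_1,u_2$ point to opposite sides, but then near $a$ the segment entering $\interior(A)$ shows $a\in(\overline{\interior(A)})_t$ for that small $t$, and the normal line from a nearby distinct point $b\in A$ (which exists since $A\cong\Sphere^{n-1}$ is not a singleton) would, by a limiting argument as $b\to a$, be forced to meet $a+[0,t]u_1$ inside $A_t$, contradicting \eqref{alternativerepresentationofreach}. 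Once that is in place, everything else is routine: convexity and closedness of $\Nor_a A$ plus the two opposite unit directions give $\Nor_a A=\R s$, and \eqref{reachAminreachextint} together with Lemma \ref{projectionissurjectiveonUbackslashA} identifies the half-line for $\overline{\interior(A)}$.
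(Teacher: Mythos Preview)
Your proposal correctly identifies the starting point---use Lemma~\ref{projectionissurjectiveonUbackslashA} together with \eqref{reachAminreachextint} to produce points $x_{1}\in\interior(A)$ and $x_{2}\in\exterior(A)$ that both project to $a$ under $\xi_{A}$---but the argument you sketch for the crucial dimension-reduction step does not go through as written. Your plan is to derive a contradiction from $\dim\Nor_{a}A\geq 2$ via Lemma~\ref{alternativecharacterisationreach}, by claiming that normals emanating from nearby points $b\to a$ would have to meet the segment $a+[0,t]u_{1}$ inside $A_{t}$. But nothing you have established controls the normal cones $\Nor_{b}A$ at nearby points (that is essentially what Lemma~\ref{normalscontinuous} provides \emph{after} the present proposition is proved), and Lemma~\ref{alternativecharacterisationreach} only forbids intersections of normals based at \emph{different} points; two rays $a+[0,t]u_{1}$ and $a+[0,t]u_{2}$ based at the same $a$ meet only at $a$ and give no contradiction. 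The ``limiting argument'' is therefore circular or, at best, incomplete. Similarly, your earlier attempt to show $\Tan_{a}\overline{\interior(A)}$ is a full half-space relies on the assertion that ``the interior approaches $a$ from a full half-space worth of directions'', which is not obvious for a merely topological sphere and is again a regularity statement of the sort you are trying to prove.

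The paper avoids all of this with a single elementary observation that you are missing: if $\xi_{A}(x_{i})=a$ then the open ball $B_{\lvert x_{i}-a\rvert}(x_{i})$ is disjoint from $A$, hence entirely contained in the connected component of $\R^{n}\setminus A$ containing $x_{i}$. Since $x_{1}$ and $x_{2}$ lie in different components, the two balls are disjoint, so by the strict triangle inequality $x_{1},a,x_{2}$ must be collinear with $a$ between them. This immediately gives $\R(x_{1}-a)\subset\Nor_{a}A$ by \eqref{normalconefederer12}. For the reverse inclusion, apply the very same two-ball argument to any $x\in\Unp(A)\setminus\{a\}$ with $\xi_{A}(x)=a$ and to whichever of $x_{1},x_{2}$ lies on the opposite side: collinearity with $a$ forces $x-a\in\R(x_{1}-a)$. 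No tangent-cone analysis, no limiting argument, no appeal to Lemma~\ref{alternativecharacterisationreach} is needed.
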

\begin{proof}
	Clearly $\xi_{A}(x)=a$, $x\not=a$ implies $B_{\abs{x-a}}(x)\cap A=\emptyset$.
	By Lemma \ref{projectionissurjectiveonUbackslashA} and \eqref{reachAminreachextint} we know that for all $a\in A$ there are $x_{1}\in\interior(A)$, $x_{2}\in\exterior(A)$, such that
	$\xi_{A}(x_{i})=a$ and hence $B_{\abs{x_{1}-a}}(x_{1})\subset \interior(A)$, $B_{\abs{x_{2}-a}}(x_{2})\subset \exterior(A)$. 
	Then we must have that $x_{1},x_{2},a$ lie on a straight line, with $a$ between $x_{1}$ and $x_{2}$, as else $\abs{x_{1}-x_{2}}<\abs{x_{1}-a}+\abs{a-x_{2}}$, so that there would be a point 
	\begin{align*}
		y=x_{1}+\alpha\frac{x_{2}-x_{1}}{\abs{x_{2}-x_{1}}}=x_{2}+(\abs{x_{1}-x_{2}}-\alpha)\frac{x_{1}-x_{2}}{\abs{x_{1}-x_{2}}}\in B_{\abs{x_{1}-a}}(x_{1})\cap B_{\abs{x_{2}-a}}(x_{2})
	\end{align*}
	with $0\leq \alpha< \abs{x_{1}-a}$ and $0\leq\abs{x_{1}-x_{2}}-\alpha< \abs{x_{2}-a}$. Obviously this contradicts $\interior(A)\cap\exterior(A)=\emptyset$.
	Therefore, $\R(x_{1}-a)\subset\Nor_{a}A$, by \eqref{normalconefederer12}, and with the same argument as above we can also show that $\Nor_{a}A\subset \R(x_{1}-a)$.
\end{proof}

An $s\in\mathbb{S}^{n-1}$ with $[0,\infty)s\subset\Nor_{a}\overline{\interior(A)}$ is called \emph{outer normal} of a closed hypersurface $A$ at $a$ and correspondingly $-s$ an \emph{inner normal}.
If the outer normal is unique we denote it by $\nu(a)$.

\begin{lemma}[(Normals are continuous)]\label{normalscontinuous}
	Let $A$ be a closed hypersurface in $\R^{n}$, $\reach(A)>0$, $a_{k}\in A$, $a_{k}\to a$ and $s_{k}\in \Sphere^{n-1}$ be outer normals for $A$ at $a_{k}$. 
	Then $s_{k}\to s$ and $s\in\Sphere^{n-1}$ is the outer normal of $A$ at $a$.
\end{lemma}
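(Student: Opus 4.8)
The statement says that outer normals of a closed hypersurface of positive reach depend continuously on the base point. The natural strategy is a compactness argument combined with the uniqueness of normal directions (Proposition \ref{uniquenormalcones}). First I would note that since $s_{k}\in\Sphere^{n-1}$, after passing to a subsequence we may assume $s_{k}\to s$ for some $s\in\Sphere^{n-1}$; by a standard subsequence argument it suffices to show that \emph{every} convergent subsequence has the outer normal of $A$ at $a$ as its limit, which then forces the whole sequence to converge to it. So the real content is: if $a_{k}\to a$ and $s_{k}\to s$ with $s_{k}$ an outer normal at $a_{k}$, then $s$ is the (unique, by Proposition \ref{uniquenormalcones}) outer normal at $a$.

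To identify $s$ as the outer normal at $a$, I would use the characterization of normals via the projection map. Fix $0<\rho<\reach(A)$. Since $s_{k}$ is an outer normal at $a_{k}$, we have $[0,\infty)s_{k}\subset\Nor_{a_{k}}\overline{\interior(A)}$, and by \eqref{tracingthenormal} (applied to $\overline{\interior(A)}$, whose reach is $\geq\reach(A)$ by \eqref{reachAminreachextint}) the point $x_{k}\vcentcolon=a_{k}+\rho s_{k}$ satisfies $\xi_{\overline{\interior(A)}}(x_{k})=a_{k}$; equivalently $B_{\rho}(x_{k})\subset\interior(A)$ together with $x_{k}-a_{k}\in\Nor_{a_{k}}\overline{\interior(A)}$, or more simply $B_{\rho}(x_{k})\cap A=\emptyset$ (here $x_k$ lies on the exterior side). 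Now $x_{k}\to x\vcentcolon=a+\rho s$. Passing to the limit, $\abs{x-a}=\rho$ and $\dist(x,A)=\lim\dist(x_{k},A)=\rho$ since $\dist(\cdot,A)$ is continuous and $\dist(x_{k},A)=\abs{x_{k}-a_{k}}=\rho$. Hence $a$ is a nearest point of $A$ to $x$, and since $\rho<\reach(A)$ this nearest point is unique, so $\xi_{A}(x)=a$ and $x-a=\rho s\in\Nor_{a}A$. Moreover the open balls $B_{\rho}(x_{k})$ lie on the interior side, so $x$ lies on the exterior side as well (e.g. $B_{\rho}(x)\subset\overline{\interior(A)}$ would contradict $B_{\rho}(x_{k})\subset\interior(A)$ being exterior balls — one argues that $x$ is approached by points with $\xi_{\overline{\exterior(A)}}$ equal to $a_k$, or directly that $B_\rho(x)$ cannot meet $\interior(A)$). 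Thus $s$ is a unit vector in $\Nor_{a}\overline{\interior(A)}$, which by \eqref{definitionouternormal} is exactly $[0,\infty)\nu(a)$, so $s=\nu(a)$.

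Since the limit $s=\nu(a)$ is independent of the chosen subsequence, the full sequence $s_{k}$ converges to $\nu(a)$, which is the outer normal of $A$ at $a$; this proves the lemma.

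The main obstacle, and the step deserving the most care, is verifying that the limiting point $x=a+\rho s$ lands on the \emph{correct} (exterior) side of $A$, i.e. that $s$ is an \emph{outer} normal at $a$ and not merely that $\pm s$ spans $\Nor_{a}A$. The bookkeeping here is: the balls $B_{\rho}(x_{k})$ are disjoint from $A$ and sit in $\exterior(A)$ (since $s_{k}$ is outer at $a_{k}$, the point $a_{k}+\rho s_{k}$ is the exterior-side point at distance $\rho$); one must pass this openness/side information to the limit. The clean way is to apply Lemma \ref{projectionissurjectiveonUbackslashA} and \eqref{reachAminreachextint} directly to $\overline{\exterior(A)}$: the point $x_{k}=a_{k}+\rho s_{k}$ satisfies $\xi_{\overline{\exterior(A)}}(x_{k})=a_{k}$ because $\rho s_{k}\in\Nor_{a_{k}}\overline{\exterior(A)}$ by \eqref{definitionouternormal} and \eqref{normalconefederer12} (using $\reach(\overline{\exterior(A)})\geq\reach(A)>\rho$), so by continuity of $\xi_{\overline{\exterior(A)}}$ on its $\rho$-neighbourhood (\cite[4.8 Theorem (4)]{Federer1959a}) we get $\xi_{\overline{\exterior(A)}}(x)=a$, hence $x-a=\rho s\in\Nor_{a}\overline{\exterior(A)}$, which by \eqref{definitionouternormal} means $s\in[0,\infty)\nu(a)\cap\Sphere^{n-1}=\{\nu(a)\}$.
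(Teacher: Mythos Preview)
Your overall approach---compactness of $\Sphere^{n-1}$, continuity of the metric projection, uniqueness from Proposition~\ref{uniquenormalcones}, and the subsequence/Urysohn principle---is exactly the paper's. The paper works with $\xi_{A}$ directly and is somewhat terser about why the limit is the \emph{outer} rather than the inner normal, but the skeleton is identical.

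There is, however, a genuine interior/exterior mix-up in your write-up that makes the final step fail as written. Since $s_{k}$ is the \emph{outer} normal, \eqref{definitionouternormal} gives $\rho s_{k}\in\Nor_{a_{k}}\overline{\interior(A)}$, so $x_{k}=a_{k}+\rho s_{k}$ lies in $\exterior(A)$ and $B_{\rho}(x_{k})\subset\exterior(A)$ (not $\interior(A)$ as you wrote). Consequently your ``clean'' paragraph should invoke $\xi_{\overline{\interior(A)}}$, not $\xi_{\overline{\exterior(A)}}$: the point $x_{k}$ already belongs to $\overline{\exterior(A)}$, so $\xi_{\overline{\exterior(A)}}(x_{k})=x_{k}\neq a_{k}$, and the claim $\rho s_{k}\in\Nor_{a_{k}}\overline{\exterior(A)}$ is false (that cone is $[0,\infty)(-s_{k})$). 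With $\overline{\interior(A)}$ in place of $\overline{\exterior(A)}$ the argument goes through cleanly: continuity of $\xi_{\overline{\interior(A)}}$ on its $\rho$-neighbourhood (\cite[4.8 Theorem (4)]{Federer1959a}, using $\reach(\overline{\interior(A)})\geq\reach(A)>\rho$ from \eqref{reachAminreachextint}) gives $\xi_{\overline{\interior(A)}}(a+\rho s)=a$, hence $\rho s\in\Nor_{a}\overline{\interior(A)}=[0,\infty)\nu(a)$ and $s=\nu(a)$.
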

\begin{proof}
	Let $(s_{k_{l}})_{l\in\N}$ be a subsequence. Then, as $\Sphere^{n-1}$ is compact, there is an $u\in \Sphere^{n-1}$ and a further subsequence with $s_{k_{l_{m}}}\to u$.
	Since $\xi_{A}$ is continuous, see \cite[4.8 Theorem (4)]{Federer1959a}, we have
	\begin{align*}
		a_{k_{l_{m}}}=\xi_{A}(a_{k_{l_{m}}}+ts_{k_{l_{m}}})\to a=\xi_{A}(a+tu)\quad\text{for all }t<\reach(A).
	\end{align*}
	According to \cite[4.8 Theorem (2)]{Federer1959a} holds $u\in \Nor_{a}A$.
	By Proposition \ref{uniquenormalcones} there is a single $s\in\Sphere^{n-1}$ such that $u=s$ for all subsequences and $s$ is outer normal of $A$ at $a$. By Urysohn's principle we have $s_{k}\to s$.
\end{proof}

The proof also shows that for any closed set of positive reach the limit of normals is a normal at the limit point.

\begin{lemma}[(Closed hypersurface of positive reach is locally a graph)]\label{uniquenormallocallygraph}
	Let $A\subset \R^{n}$ be a closed hypersurface, $\reach(A)>0$, $a\in A$ such that $\Nor_{a}A=\R s$ and $s\in\Sphere^{n-1}$ is an outer normal. Then $A$ is locally a graph over $a+(\Nor_{a}A)^{\perp}$.
	Put more precisely, this means that there is $\epsilon>0$ such that after a rotation and translation $\Phi:\R^{n}\to\R^{n}$, transforming $a$ to $0$ and $s$ to $e_{n}$, we can write
	\begin{align*}
		\Psi: \R^{n-1}\supset B_{\epsilon}(0)\to \Phi(B_{\epsilon}(a)\cap A),\,v\mapsto (v,f(v)),
	\end{align*}
	with a bijective function $\Psi$ and a scalar function $f:\R^{n-1}\to \R$.
\end{lemma}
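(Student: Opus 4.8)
The plan is to argue by contradiction using the two balls that sit on either side of $A$ at the point $a$, whose existence is guaranteed by Lemma \ref{projectionissurjectiveonUbackslashA} together with \eqref{reachAminreachextint}. Write $\rho := \reach(A) > 0$. After applying the rotation and translation $\Phi$ so that $a = 0$ and $s = e_n$, I would fix a radius $r \in (0,\rho)$ and consider the two open balls $B^- := B_r(-re_n)$ (inside $A$) and $B^+ := B_r(re_n)$ (outside $A$). As in Proposition \ref{uniquenormalcones}, $\xi_A(\pm r e_n) = 0$ forces $B^- \subset \interior(A)$ and $B^+ \subset \exterior(A)$, so $A$ is disjoint from $B^- \cup B^+$.

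First I would choose $\epsilon > 0$ small enough (e.g. $\epsilon < r/2$) so that for each $v \in B_\epsilon(0) \subset \R^{n-1}$ the vertical line $L_v := \{(v,t) \mid t \in \R\}$ meets both $B^-$ and $B^+$ in honest segments; concretely, $L_v$ enters $B^-$ for $t$ in some open subinterval around $-r$ and enters $B^+$ for $t$ in some open subinterval around $r$. Since $A$ avoids both balls, any intersection point of $L_v$ with $A$ must have its $t$-coordinate in the complement of those two subintervals. I would then use connectedness: the ball $B_\epsilon(a) \cap A$ is a relative neighborhood of $a$ in the topological sphere $A$, hence (shrinking $\epsilon$) it is connected and separates its own small neighborhood in $\R^n$; more usefully, by the Jordan–Brouwer separation theorem the line segment of $L_v$ joining a point of $B^-$ to a point of $B^+$ runs from $\interior(A)$ to $\exterior(A)$, so it must cross $A$. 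This gives existence of at least one intersection point $(v, f(v)) \in A$ on each vertical line, with $|f(v)|$ bounded by a quantity tending to $0$ as $|v| \to 0$ (the ``lens'' between the two balls pinches at the origin).

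For uniqueness — the step I expect to be the main obstacle — suppose $L_v$ met $A$ in two distinct points $p_1 = (v,t_1)$ and $p_2 = (v,t_2)$. Both are boundary points of $\overline{\interior(A)}$ and of $\overline{\exterior(A)}$. Applying Lemma \ref{projectionissurjectiveonUbackslashA} and Proposition \ref{uniquenormalcones} at each $p_i$ yields inside and outside balls of radius $r$ tangent to $A$ at $p_i$; in particular there is an outside ball $B_r(c_i)$ with $B_r(c_i) \cap A = \emptyset$ and $p_i \in \partial B_r(c_i)$. Because the $p_i$ lie close to $a$, the outer normal directions at $p_1$ and $p_2$ are close to $e_n$ by Lemma \ref{normalscontinuous}, so these tangent balls are ``nearly vertical''. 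I would then derive a contradiction from the fact that $p_1$ and $p_2$ lie on the same vertical line: the outside ball at the lower point and the inside ball at the upper point (or vice versa) must overlap, or one of the balls must contain the other point $p_j \in A$, contradicting $B_r(c_i) \cap A = \emptyset$. Making this quantitative — choosing $\epsilon$ small enough, as a function of $r$ and the modulus of continuity of the normal from Lemma \ref{normalscontinuous}, that the vertical separation $|t_1 - t_2|$ is forced to exceed $2r$ or else one ball swallows the other point — is the crux. Once uniqueness is established, $f : B_\epsilon(0) \to \R$ is well defined, $\Psi(v) = (v,f(v))$ is a bijection onto $\Phi(B_\epsilon(a) \cap A)$ by construction, and the proof is complete.
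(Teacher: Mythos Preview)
Your approach is essentially the same as the paper's: both argue uniqueness by contradiction, using the continuity of normals (Lemma~\ref{normalscontinuous}) to conclude that a tangent ball at one of the two hypothetical points on the same vertical line must swallow the other, contradicting that such balls are disjoint from $A$.

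Two minor differences are worth noting. First, the paper's uniqueness argument is cleaner than the one you sketch: rather than fixing a ball radius $r$ and then fighting to make the overlap quantitative, the paper takes the ball at the lower point $y$ of radius exactly $t = |y-z|$, centered at $y + t s_y$. The condition needed is then simply that the vertical segment $y + (0,t)e_n$ lies inside $B_t(y + t s_y)$, which follows from $\sin(\alpha_y/2) \le 1/2$ where $\alpha_y = \measuredangle(e_n, s_y)$; this angle is small for small $\epsilon$ by Lemma~\ref{normalscontinuous}. This sidesteps the ``making this quantitative'' step you flagged as the crux. Second, the paper's proof addresses only the uniqueness direction and does not spell out existence at all; your use of Jordan--Brouwer and the two tangent balls to force each vertical line to meet $A$ is a genuine addition, and arguably fills a gap the paper leaves implicit.
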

\begin{proof}
	Assume that the proposition is not true. Without loss of generality we might assume $a=0$ and $s=e_{n}$. Then for every $\epsilon>0$ there are $y=y(\epsilon),z=z(\epsilon)\in B_{\epsilon}(0)\cap A$, 
	$y\not= z$ such that $y_{i}=z_{i}$, for $i=1,\ldots,n-1$. Without loss of generality let $0<y_{n}<z_{n}$. If $s_{y}$ is the outer normal at $y$, we know by Lemma \ref{normalscontinuous} that
	$\alpha_{y}\vcentcolon=\measuredangle(s,s_{y})\to 0$, for $\epsilon\to 0$. By elementary geometry we have $y+(0,t)e_{n}\subset B_{t}(y+ts_{y})$, if $\sin(\alpha_{y}/2)\leq 2^{-1}$. 
	This means that $z\in B_{t}(y+ts_{y})$ for $\abs{y-z}=t<\reach(A)$, if $\epsilon$ is small enough. 
	But as we have seen in the proof of Proposition \ref{uniquenormalcones}, we have $B_{t}(y+ts_{y})\cap A=\emptyset$. \emph{Contradiction}.
\end{proof}

The \emph{subdifferential} of a function $f:\Omega\to\R$, $\Omega\subset\R^{n}$ at $x\in\Omega$ is the set
\begin{align*}
	\partial f(x)\vcentcolon=\Big\{v\in\R^{n}\mid \liminf_{y\to x}\frac{f(y)-f(x)-\langle v,y-x\rangle}{\abs{y-x}}\geq 0\Big\},
\end{align*}
see \cite[Definition 8.3, (a) and 8(4), p.301]{Rockafellar1998a}.\\

The next lemma is a special case of \cite[Proposition 1.4]{Lytchak2005a}.

\begin{lemma}[(Closed hypersurface of positive reach are $C^{1,1}$)]\label{closedhypersurfacesareC1}
	Let $A\subset \R^{n}$ be a closed hypersurface, $\reach(A)>0$. Then $A$ is a $C^{1,1}$ hypersurface.
\end{lemma}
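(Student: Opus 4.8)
The plan is to combine the local graph representation from Lemma~\ref{uniquenormallocallygraph} with the second-difference characterisation of $C^{1,1}$ functions promised in Proposition~\ref{characterisationofh\"olderfunctions} (with $\alpha=1$). By that lemma, fix $a\in A$ with unique outer normal $s$; after a rigid motion we may assume $a=0$, $s=e_n$, and $B_\epsilon(a)\cap A$ is the graph of a scalar function $f:B_\epsilon(0)\subset\R^{n-1}\to\R$ with $f(0)=0$. Since $C^{1,1}$ is a local and coordinate-free condition, it suffices to show that $f$ is $C^{1,1}$ on a (possibly smaller) ball, i.e. that there is a constant $C$ with $\abs{f(v-h)-2f(v)+f(v+h)}\le C\abs{h}^2$ for $v,v\pm h$ near $0$.

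First I would record the geometric consequences of positive reach for the graph. By the argument in the proof of Proposition~\ref{uniquenormalcones}, for each point $(v,f(v))$ on the graph with outer normal $s_v\in\Sphere^{n-1}$, the open balls $B_t(({v},f(v))\pm t s_v)$ meet $A$ only at $({v},f(v))$, for every $t<\reach(A)$; equivalently, $A$ lies outside the interiors of these two tangent balls of radius $r_0\vcentcolon=\tfrac12\reach(A)$. By Lemma~\ref{normalscontinuous} the map $v\mapsto s_v$ is continuous, and since $s_0=e_n$ we may shrink $\epsilon$ so that $s_v$ stays in a small cone around $e_n$; in particular $s_v$ is never tangent to the graphing direction, so the graph is \emph{two-sided} ball-supported: there is $\rho>0$ and, through each graph point, an upper and a lower ball of radius $\rho$ (obtained from the radius-$r_0$ tangent balls by a uniform lower bound on the $e_n$-component of $s_v$) that do not contain other points of the graph. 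This is the standard ``$\rho$-reach'' / rolling-ball condition for the hypograph and epigraph of $f$.

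Next I would turn this rolling-ball condition into the second-difference bound. Fix $v$ and a unit direction $e\in\R^{n-1}$, and consider the one-variable slice $g(\tau)\vcentcolon=f(v+\tau e)$. The lower supporting ball of radius $\rho$ at $(v,g(0))$ gives, after projecting to the vertical plane through $v$ and $e$, a circular arc of radius $\ge\rho$ lying below the graph of $g$ and touching it at $\tau=0$; comparing $g$ with this arc at $\tau=\pm\abs h$ yields $g(\abs h)-2g(0)+g(-\abs h)\ge -C'\abs h^2$ for a constant $C'$ depending only on $\rho$ (valid for $\abs h$ below a threshold, again depending only on $\rho$ and the cone bound on $s_v$). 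The upper supporting ball gives the reverse inequality $g(\abs h)-2g(0)+g(-\abs h)\le C'\abs h^2$. Combining and unwinding the slicing gives exactly $\abs{f(v-h)-2f(v)+f(v+h)}\le C\abs h^2$ for all small $h$, with $C$ uniform near $0$. (One should note $f$ is continuous to begin with: continuity of $f$ follows from the bijectivity of $\Psi$ in Lemma~\ref{uniquenormallocallygraph} together with compactness, or directly from the rolling-ball condition, so the hypotheses of Proposition~\ref{characterisationofh\"olderfunctions} are met.)

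Finally, Proposition~\ref{characterisationofh\"olderfunctions} upgrades this to $f\in C^{1,1}_{\mathrm{loc}}$ near $0$; since $a\in A$ was arbitrary, $A$ is covered by $C^{1,1}$ graph charts and is thus a $C^{1,1}$ hypersurface. The main obstacle is the middle step: making the passage from ``two tangent balls of radius $r_0$ avoid $A$'' to a genuinely \emph{two-sided} rolling-ball condition with a \emph{uniform} radius $\rho$ and a \emph{uniform} size threshold on $\abs h$ near $a$ — this is where the continuity of the normal (Lemma~\ref{normalscontinuous}) and the graph representation (Lemma~\ref{uniquenormallocallygraph}) must be used carefully to control the tilt of the supporting balls relative to the $e_n$-axis, so that the elementary circular-arc comparison yields a constant independent of the base point. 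Everything after that is a routine estimate with circles.
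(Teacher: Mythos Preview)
Your approach is correct and shares the same skeleton as the paper's proof---local graph representation via Lemma~\ref{uniquenormallocallygraph}, a second-difference bound, then Proposition~\ref{characterisationofh\"olderfunctions}---but the middle step is organised differently. The paper first shows, using the subdifferential characterisation of normals to the epigraph and hypograph, that $f$ is differentiable with continuous gradient (hence locally Lipschitz); with the normal $\nu=(\nabla f,-1)/\sqrt{1+\abs{\nabla f}^2}$ then explicitly in hand, the second difference $f(x-h)-2f(x)+f(x+h)$ is rewritten as a sum of two projections onto $\Nor_a A$ and bounded directly by Federer's estimate $\dist(b-a,\Tan_a A)\le \abs{b-a}^2/(2t)$ from \cite[4.18 Theorem]{Federer1959a}, together with the Lipschitz bound $\abs{b_\pm-a}\le c\abs{h}$. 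Your route instead goes straight from the two-sided tangent-ball picture to the second-difference bound by comparing $f$ with circular arcs in vertical $2$-planes, bypassing the preliminary $C^1$ step entirely. What you gain is a self-contained geometric argument that needs neither subdifferential theory nor Federer's 4.18; what you pay is precisely the obstacle you flag---the tilt of $s_v$ relative to $e_n$ must be controlled (via Lemma~\ref{normalscontinuous}) so that the sliced disks keep a uniform radius and the arc comparison works on a uniform scale. The paper's detour through $C^1$ effectively absorbs this tilt issue into the local Lipschitz constant of $f$, which is why its final estimate is a two-line computation rather than a geometric case analysis.
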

\begin{proof}
	\textbf{Step 1}
		From Lemma \ref{uniquenormallocallygraph} we know that we can write $A$ locally as the graph of a real function $f$. Let $a\in A$. 
		Without loss of generality we assume that $s=-e_{n}$ is the, thanks to Lemma \ref{uniquenormalcones}, unique outer normal of $a$ at $A$ and $a=(x,f(x))$.
		By the characterisation of subdifferentials in terms of normal vectors \cite[8.9 Theorem, p.304f,]{Rockafellar1998a} it is clear that $\partial f(x)=\{v\}$, where $(v,-1)\in \Nor_{(x,f(x))}\epi(f)=[0,\infty)s$,
		corresponding to the normal of $\overline{\interior(A)}$. Likewise $\partial (-f)(x)=\{-v\}$, where $(-v,-1)\in \Nor_{(x,-f(x))}\epi(-f)=[0,\infty)s$, 
		corresponding to the normal of $\overline{\exterior(A)}$. This means that
		\begin{align*}
			\lim_{y\to x}\frac{f(y)-f(x)-\langle v,y-x\rangle}{\abs{y-x}}=0,
		\end{align*}
		as $\liminf_{y\to x}[-g(x)]=-\limsup_{y\to x}g(x)$. Hence, $f$ is differentiable at $x$ and $\nabla f(x)=v$. 
		Let $x_{k}\to x_{0}$ and $(\nabla f(x_{k}),-1)\in \Nor_{(x_{k},f(x_{k}))}\epi(f)=[0,\infty)s_{k}$, $\abs{s_{k}}=1$, $k\in\N_{0}$. Then, as we have seen in Lemma \ref{normalscontinuous}, $s_{k}\to s_{0}$ and
		there are $t_{k}\in [0,\infty)$, such that $(\nabla f(x_{k}),-1)=t_{k}s_{k}$. Additionally,
		\begin{align*}
			t_{0}s_{0}^{n}=-1=t_{k}s_{k}^{n}\quad\Rightarrow\quad
			t_{k}=t_{0}\frac{s_{0}^{n}}{s_{k}^{n}}\to t_{0},
		\end{align*}
		and therefore $\nabla f(x_{k})\to \nabla f(x_{0})$. This means the Jacobian matrix $\nabla f$ is continuous and hence $f$ is locally Lipschitz.\\
	\textbf{Step 2}
		Using \cite[4.18 Theorem]{Federer1959a} and the abbreviations $a\vcentcolon=(x,f(x))$, $b_{\pm}\vcentcolon=(x\pm h,f(x\pm h))$ we can estimate
		\begin{align*}
			\MoveEqLeft
			\Big|\frac{f(x-h)-2f(x)+f(x+h)}{\sqrt{1+\abs{\nabla f (x)}^{2}}}\Big|\\
			&=\Big|\frac{-[f(x)-f(x-h)+\langle\nabla f(x),(x-h)-x\rangle]-[f(x)-f(x+h)+\langle\nabla f(x),(x+h)-x\rangle]}{\sqrt{1+\abs{\nabla f (x)}^{2}}}\Big|\\
			&\leq\Big|\Big\langle \begin{bmatrix} x-h\\f(x-h)\end{bmatrix}-\begin{bmatrix} x\\f(x)\end{bmatrix}, 
			\frac{1}{\sqrt{1+\abs{\nabla f(x)}^{2}}} \begin{bmatrix} \nabla f(x)\\-1\end{bmatrix}\Big\rangle\Big|\\
			&\qquad+\Big|\Big\langle \begin{bmatrix} x+h\\f(x+h)\end{bmatrix}-\begin{bmatrix} x\\f(x)\end{bmatrix}, 
			\frac{1}{\sqrt{1+\abs{\nabla f(x)}^{2}}} \begin{bmatrix} \nabla f(x)\\-1\end{bmatrix}\Big\rangle\Big|\\
			&=\pi_{\Nor_{a}A}(b_{-}-a)+\pi_{\Nor_{a}A}(b_{+}-a)
			=\dist(b_{-}-a,\Tan_{a}A)+\dist(b_{+}-a,\Tan_{a}A)\\
			&\leq \frac{\abs{b_{-}-a}^{2}}{2t}+\frac{\abs{b_{+}-a}^{2}}{2t}\\
			&= \frac{\abs{(x-h)-x}^{2}+\abs{f(x-h)-f(x)}^{2}}{2t}+\frac{\abs{(x+h)-x}^{2}+\abs{f(x+h)-f(x)}^{2}}{2t}\\
			&\leq \frac{c^{2}}{t}\abs{h}^{2},
		\end{align*}
		for $t\leq \reach(A)$. Now, Proposition \ref{characterisationofh\"olderfunctions} implies that $f$ is of class $C^{1,1}$.
\end{proof}

The interesting direction of the next very useful proposition can be found in \cite[Lemma 2.1]{Calabi1970a} for a more general modulus of continuity; but as we are only interested in H\"older continuous derivatives,
we use this specialised version. We also found the proposition formulated in \cite[Lemma 2.1]{Lytchak2005a} in a form very close to the way we present it here. The idea of smoothing the function came from
\cite[Theorem 2.1]{La-Torre2005a}.

\begin{proposition}[(Characterisation of $C^{1,\alpha}_{\mathrm{loc}}$ functions)]\label{characterisationofh\"olderfunctions}
	Let $\Omega\subset \R^{n}$ be open, $f:\Omega\to \R^{m}$ bounded and $0<\alpha\leq 1$. Then the following are equivalent
	\begin{itemize}
		\item
			there are $\rho>0$ and $L>0$ such that for all $x\in\Omega$ holds $f\in C^{1,\alpha}(B_{\rho_{x}}(x),\R^{m})$
			and $[Df\vert_{B_{\rho_{x}}(x)}]_{C^{0,\alpha}}\leq L$, where $\rho_{x}=\min\{\dist(x,\partial\Omega),\rho\}$,
		\item
			there is $C>0$ and $\delta>0$ such that for all $x\in \Omega$ and all $\abs{h}<\delta_{x}=\min\{\dist(x,\partial\Omega),\delta\}$ holds
			\begin{align*}
				\abs{f(x-h)-2f(x)+f(x+h)}\leq C \abs{h}^{1+\alpha}.
			\end{align*}
	\end{itemize}
\end{proposition}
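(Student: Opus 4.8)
The plan is to prove the two implications separately, treating the "easy" direction (smoothness $\Rightarrow$ second-difference bound) first and the "interesting" direction (second-difference bound $\Rightarrow$ $C^{1,\alpha}_{\mathrm{loc}}$) via a mollification argument. For the forward direction, suppose $f\in C^{1,\alpha}(B_{\rho_x}(x),\R^m)$ with $[Df]_{C^{0,\alpha}}\leq L$ on each such ball. Fix $x\in\Omega$ and $\abs{h}<\delta_x$ where $\delta=\rho/2$, say. Using the fundamental theorem of calculus along the segments from $x$ to $x\pm h$, I would write
\begin{align*}
	f(x+h)-f(x)=\int_0^1 Df(x+th)h\dt,\qquad
	f(x-h)-f(x)=-\int_0^1 Df(x-th)h\dt,
\end{align*}
so that $f(x-h)-2f(x)+f(x+h)=\int_0^1\bigl[Df(x+th)-Df(x-th)\bigr]h\dt$, and then estimate the integrand by $[Df]_{C^{0,\alpha}}\abs{2th}^{\alpha}\leq L\,2^{\alpha}\abs{h}^{\alpha}$, giving the bound with $C=2^{\alpha}L/(1+\alpha)$ (one must keep $\abs{h}$ small enough that the whole segment lies in a ball where the Hölder seminorm estimate applies, which is why $\delta_x$ is defined with $\dist(x,\partial\Omega)$).

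For the reverse direction — the main obstacle — I would follow Calabi's idea of smoothing. Fix a standard mollifier $\psi\in C^\infty_c(B_1(0))$ with $\int\psi=1$, and for $\epsilon>0$ set $f_\epsilon\vcentcolon= f*\psi_\epsilon$, which is smooth on the set of points at distance $>\epsilon$ from $\partial\Omega$. The key computation is to bound $D f_\epsilon(x)$ and its increments using only the hypothesis. Writing $Df_\epsilon(x)=\epsilon^{-n}\int f(y)D\psi_\epsilon(x-y)\dy$ and using $\int D\psi_\epsilon=0$, one symmetrizes the integral over $y\mapsto 2x-y$ to replace $f(y)$ by $\frac12[f(y)+f(2x-y)]=f(x)+\frac12[f(y)-2f(x)+f(2x-y)]$; the $f(x)$ term integrates to zero and what remains is controlled by $C\abs{y-x}^{1+\alpha}$ on the ball of radius $\epsilon$, so that the difference of two mollified gradients, or the gradient of $f_\epsilon$ minus that of $f_{\epsilon'}$, is $O(\epsilon^{\alpha})$ uniformly. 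This shows $(Df_\epsilon)_{\epsilon>0}$ is uniformly Cauchy as $\epsilon\to 0$ on compactly contained balls, hence converges uniformly to a continuous function $g$; a second application of the same symmetrization shows $[Df_\epsilon]_{C^{0,\alpha}}$ is bounded uniformly in $\epsilon$, so $g\in C^{0,\alpha}$ with the desired seminorm bound, and $f_\epsilon\to f$ uniformly forces $f\in C^1$ with $Df=g$.

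The delicate points I expect to absorb most of the effort are: (i) bookkeeping the domains so that the estimate $\abs{f(x-h)-2f(x)+f(x+h)}\leq C\abs{h}^{1+\alpha}$ is only ever invoked for admissible $\abs{h}<\delta_x$, which forces working on balls $B_{\rho'}(x)$ with $\rho'$ a fixed fraction of $\min\{\dist(x,\partial\Omega),\delta\}$ and tracking how the constants $\rho,L$ depend on $C,\delta$; (ii) justifying the symmetrization step rigorously, i.e. that $\int_{B_\epsilon(0)}D\psi_\epsilon(z)\dz=0$ and that after the change of variables $z\mapsto -z$ the integrand genuinely becomes the second difference $f(x-z)-2f(x)+f(x+z)$ against $\frac12 D\psi_\epsilon(z)$; and (iii) passing from uniform convergence of $f_\epsilon$ and of $Df_\epsilon$ to the conclusion $Df=g$, which is the standard fact that if $f_\epsilon\to f$ and $Df_\epsilon\to g$ uniformly on a ball then $f$ is $C^1$ there with derivative $g$. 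The case $\alpha=1$ requires no separate treatment since the argument never divides by $1-\alpha$.
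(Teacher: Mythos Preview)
Your forward direction is correct and essentially identical to the paper's Step~1.

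For the reverse direction, your symmetrization step contains a genuine error. When you write $Df_\epsilon(x)=\int f(y)\,D\psi_\epsilon(x-y)\,dy$ and substitute $y\mapsto 2x-y$, the kernel $D\psi_\epsilon$ is \emph{odd} (for radial $\psi$), so the change of variables introduces a minus sign: the average is $\tfrac12[f(y)-f(2x-y)]$, a first difference, not $\tfrac12[f(y)+f(2x-y)]$. Thus the second-difference hypothesis cannot be invoked here, and the claimed $O(\epsilon^\alpha)$ bound on $Df_\epsilon-Df_{\epsilon'}$ (whose kernel is likewise odd) does not follow. The parity works out only one order higher: symmetrizing $D^2 f_\epsilon(x)=\int f(y)\,D^2\psi_\epsilon(x-y)\,dy$ does produce the second difference and yields $\abs{D^2 f_\epsilon}\lesssim C\epsilon^{\alpha-1}$, and $Df_\epsilon(x_1)-Df_\epsilon(x_2)$ also has an even combined kernel. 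But converting these into the Cauchy property for $(Df_\epsilon)_{\epsilon\downarrow 0}$ still requires an additional telescoping or interpolation step you have not supplied. There is a secondary gap as well: your point~(iii) invokes $f_\epsilon\to f$ uniformly, which needs continuity of $f$; only boundedness is assumed.

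The paper's route is different. It works directly with the dyadic telescoping sum
\[
S_l(x,h)=\sum_{k=0}^{l}2^{k}\bigl[f(x)-2f(x+2^{-(k+1)}h)+f(x+2^{-k}h)\bigr]
= f(x+h)-f(x)-\abs{h}\,\frac{f(x+2^{-(l+1)}h)-f(x)}{2^{-(l+1)}\abs{h}},
\]
whose absolute convergence follows immediately from the hypothesis. This identity, combined with boundedness of $f$, first yields local Lipschitz continuity of $f$ (Step~3, resolving the continuity issue above); mollification then enters only in a supporting role (Step~4) to show that the limit of the difference quotients is independent of $\abs{h}$ and hence a genuine partial derivative; finally the same telescoping identity, applied at two points, gives the H\"older bound on $\nabla f$ (Step~5). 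The telescoping approach sidesteps the parity obstruction entirely and extracts Lipschitz continuity directly from boundedness; your mollification strategy, once repaired via $D^2 f_\epsilon$, is closer to standard Zygmund-class arguments and can be made to work, but needs those missing pieces.
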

\begin{proof}
	\textbf{Step 1}
		Let $f$ be as requested in the first item. Obviously, it is enough to prove the proposition for $m=1$. Using Taylor's Theorem for Lipschitz functions, Theorem \ref{taylortheoremlipschitz}, we know
		\begin{align*}
			f(x\pm h)-f(x)=\int_{0}^{1}\langle \nabla f(x\pm(1-t)h),\pm h \rangle \dt
		\end{align*}
		for all $\abs{h}<\rho_{x}$, and we obtain
		\begin{align*}
			\MoveEqLeft
			\abs{f(x-h)-2f(x)+f(x+h)}\\
			&=\Big| \int_{0}^{1}\langle \nabla f(x+(1-t)h), h \rangle +\langle \nabla f(x-(1-t)h),- h \rangle \dt\Big|\\
			&\leq \int_{0}^{1}\abs{\langle \nabla f(x+(1-t)h)-\nabla f(x-(1-t)h), h \rangle}  \dt\\
			&\leq \int_{0}^{1}L\abs{[x+(1-t)h]-[x-(1-t)h]}^{\alpha} \abs{h}  \dt\\
			&\leq 2^{\alpha}L\abs{h}^{1+\alpha}.
		\end{align*}
	\textbf{Step 2}
		Now let $f$ be as specified in the second item. We estimate
		\begin{align}\label{estimateseries}
			\begin{split}
				\MoveEqLeft
				\Big|\sum_{k=0}^{\infty}2^{k}(f(x)-2f(x+2^{-(k+1)}h)+f(x+2^{-k}h))\Big|
				\leq C\sum_{k=0}^{\infty}2^{k}(2^{-(k+1)}\abs{h})^{1+\alpha}\\
				&=C2^{-(1+\alpha)}\abs{h}^{1+\alpha}\sum_{k=0}^{\infty}(2^{-\alpha})^{k}<\infty,
			\end{split}
		\end{align}
		so that the series converges uniformly in $(x,h)$ on $U\vcentcolon=\bigcup_{x\in\Omega}\{x\}\times B_{\delta_{x}}(0)$ by Weierstra{\ss}' $M$-Test. As the $l$th partial sum is a telescoping sum, we easily compute
		\begin{align}
			\begin{split}\label{partialsums}
				\MoveEqLeft
				S_{l}(x,h)\vcentcolon=\sum_{k=0}^{l}2^{k}(f(x)-2f(x+2^{-(k+1)}h)+f(x+2^{-k}h))\\
				&=\sum_{k=0}^{l}2^{k}f(x)-\sum_{k=1}^{l+1}2^{k}f(x+2^{-k}h)+\sum_{k=0}^{l}2^{k}f(x+2^{-k}h)\\
				&=(2^{l+1}-1)f(x)-2^{l+1}f(x+2^{-(l+1)}h)+f(x+h)\\
				&=f(x+h)-f(x)-\abs{h}\frac{f(x+2^{-(l+1)}h)-f(x)}{2^{-(l+1)}\abs{h}}.
			\end{split}
		\end{align}
		Therefore for all $(x,h)\in U$, $h\not= 0$ the following limit exists (but might depend not only on the direction, but also on the absolute value of $h$)
		\begin{align*}
			\lim_{l\to\infty}\frac{f(x+2^{-(l+1)}h)-f(x)}{2^{-(l+1)}\abs{h}}.
		\end{align*}
	\textbf{Step 3}
		Let $x\in \Omega$ and $y,z\in B_{\delta_{x}/8}(x)$, $y\not=z$. Clearly $B_{\delta_{x}/8}(x)\subset B_{\delta_{z}/2}(z)$ and $\delta_{x}/2\leq \delta_{z}\leq 2\delta_{x}$. Then there is $l\in \N_{0}$ with
		$\delta_{z}/2\leq 2^{l+1}\abs{y-z}<\delta_{z}$. According to \eqref{partialsums} we have
		\begin{align*}
			\MoveEqLeft
			\abs{S_{l}(z,2^{l+1}(y-z))}\\
			&=\Big|f(z+2^{l+1}(y-z))-f(z)-2^{l+1}\abs{y-z}\frac{f(z+2^{-(l+1)}2^{l+1}(y-z))-f(z)}{2^{-(l+1)}2^{l+1}\abs{y-z}}\Big|\\
			&=\Big|f(z+2^{l+1}(y-z))-f(z)-2^{l+1}\abs{y-z}\frac{f(y)-f(z)}{\abs{y-z}}\Big|		
		\end{align*}
		and \eqref{estimateseries} yields
		\begin{align*}
			\MoveEqLeft
			\abs{S_{l}(z,2^{l+1}(y-z))}
			\leq C2^{-(1+\alpha)}\abs{2^{l+1}(y-z)}^{1+\alpha}\sum_{k=0}^{\infty}(2^{-\alpha})^{k}\\
			&\leq \Big(C2^{-(1+\alpha)}\abs{2^{l+2}\delta_{x}}^{\alpha}\sum_{k=0}^{\infty}(2^{-\alpha})^{k}\Big) 2^{l+1}\abs{y-z}
			=\vcentcolon c 2^{l+1}\abs{y-z}.
		\end{align*}
		Now, we use the reverse triangle inequality and the boundedness of $f$, i.e. $\abs{f(x)}\leq M$ for all $x\in \Omega$, to obtain
		\begin{align*}
			\Big|\frac{f(y)-f(z)}{\abs{y-z}}\Big|\leq c + \Big|\frac{f(z+2^{l+1}(y-z))-f(z)}{2^{l+1}\abs{y-z}}\Big|\leq c+2M\frac{2}{\delta_{z}}\leq c+\frac{8M}{\delta_{x}},
		\end{align*}
		so that $f$ is locally Lipschitz.\\
	\textbf{Step 4}
		In retrospect of Step 2 and Step 3 we know that the mapping 
		\begin{align*}
			g_{i}(x,\lambda)\vcentcolon=\lim_{l\to\infty}\frac{f(x+2^{-(l+1)}\lambda e_{i})-f(x)}{2^{-(l+1)}\lambda},\quad i=1,\ldots, n
		\end{align*}
		is continuous on 
		\begin{align*}
			\bigcup_{x\in\Omega}\{x\}\times ([-\delta_{x},\delta_{x}]\backslash \{0\}),
		\end{align*}
		thanks to the uniform limit theorem. Let $f_{\epsilon}$ be the mollification of $f$, i.e. the convolution with standard mollifiers $\eta_{\epsilon}$. Fix $x\in \Omega$ and $0<\abs{\lambda}<\delta_{x}/9$.
		We now want to show that there is a sequence $\epsilon_{k}\downarrow 0$ such that for all $0<\abs{\lambda}<\delta_{x}/9$ 
		we have $g_{i}(x,\lambda)=\lim_{k\to\infty}\partial _{i}f_{\epsilon_{k}}(x)$, regardless of the value of $\lambda$. Since $g_{i}(x,\lambda)$ equals $\partial_{i}f(x)$ at 
		every point $x\in \Omega$ where $f$ is differentiable, which is almost every point of $\Omega$,
		we know by elementary properties of mollifications on Sobolev spaces, note $C^{0,1}\subset W^{1,\infty}$, that
		\begin{align*}
			\partial_{i}f_{\epsilon}(x)=\Big(\eta_{\epsilon}*\lim_{l\to\infty}\Big(\frac{f(\cdot+2^{-l}\lambda e_{i})-f(\cdot)}{2^{-l}\lambda}\Big)\Big)(x)=(\eta_{\epsilon}*g_{i}(\cdot,\lambda))(x),
		\end{align*}
		for all $0<\abs{\lambda}<\delta_{x}/9$ and $\epsilon$ small enough. As $\partial_{i }f_{\epsilon}(x)$ is bounded in $\epsilon$, because $f$ is Lipschitz continuous, 
		there is a sequence $\epsilon_{k}\downarrow 0$ 
		such that $\lim_{k\to\infty}\partial_{i}f_{\epsilon_{k}}(x)=a_{i}$, or in other words, for every $\tilde\epsilon>0$ there is $N_{1}=N_{1}(\tilde\epsilon)$, 
		with $\abs{a_{i}-\partial_{i} f_{\epsilon_{k}}(x)}\leq 2^{-1}\tilde\epsilon$ for all $k\geq N_{1}$. On the other hand we find $N_{2}=N_{2}(\tilde\epsilon)$, such that
		\begin{align*}
			\MoveEqLeft
			\abs{\partial_{i}f_{\epsilon_{k}}(x)-g_{i}(x,\lambda)}=\abs{\eta_{\epsilon_{k}}(x)*g_{i}(x,\lambda)-g_{i}(x,\lambda)}\leq 2^{-1}\tilde\epsilon
		\end{align*}
		for all $k\geq N_{2}$, because $g_{i}(x,\lambda)$ is continuous. Putting the inequalities together we obtain
		\begin{align*}
			\abs{a_{i}-g_{i}(x,\lambda)}\leq \abs{a_{i}-\partial_{i}f_{\epsilon_{k}}(x)}+\abs{\partial_{i}f_{\epsilon_{k}}(x)-g_{i}(x,\lambda)}\leq \tilde\epsilon
		\end{align*}
		for all $k\geq \max\{N_{1},N_{2}\}$, i.e. $g_{i}(x,\lambda)=a_{i}$. By \eqref{estimateseries} and \eqref{partialsums}
		this means $\abs{f(x+\lambda e_{i})-f(x)-a_{i}\lambda}\leq C \abs{\lambda}^{1+\alpha}$, so that $f$ is partially differentiable 
		at $x$ with $\partial_{i} f(x)=a_{i}=g_{i}(x,\lambda)$ with continuous partial derivatives. Therefore $f$ is differentiable.\\
	\textbf{Step 5}
		Let $x\in \Omega$ and $y,z\in B_{\delta_{x}/8}(x)$, $y\not=z$ as in Step 3. Then
		\begin{align*}
			\MoveEqLeft
			\abs{\lim_{l\to\infty}S_{l}(z,y-z)+\lim_{l\to\infty}S_{l}(y,z-y)}\\
			&=\abs{f(y)-f(z)-(y-z)\nabla f(z)+f(z)-f(y)-(z-y)\nabla f(y)}\\
			&=\abs{y-z}\abs{\nabla f(y)-\nabla f(z)}
		\end{align*}
		and \eqref{estimateseries} yields
		\begin{align*}
			\abs{\lim_{l\to\infty}S_{l}(z,y-z)+\lim_{l\to\infty}S_{l}(y,z-y)}
			\leq 2 C2^{-(1+\alpha)}\abs{y-z}^{1+\alpha}\sum_{k=0}^{\infty}(2^{-\alpha})^{k}
			=\vcentcolon \tilde C\abs{y-z}^{1+\alpha}.
		\end{align*}
\end{proof}

\subsection{Closed $C^{1,1}$ hypersurfaces have positive reach}

It is folklore that compact $C^{1,1}$ submanifolds have positive reach and in fact this can even be found in many remarks in the literature, see for example \cite[below 2.1 Definitions]{Fu1989a} or
\cite[under Theorem 1.1]{Lytchak2004b}, but, unfortunately, the author was not able to locate a single proof. Therefore we show the statement in a special case, adapted to our needs.

\begin{lemma}[(Closed $C^{1,1}$ hypersurfaces have positive reach)]
	Let $A\subset \R^{n}$ be a closed hypersurface of class $C^{1,1}$. Then $\reach(A)>0$.
\end{lemma}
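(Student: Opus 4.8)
The plan is to localise using compactness, then run an elementary two-sided ``rolling ball'' estimate in each local graph chart; the sign parameter $\sigma\in\{+1,-1\}$ below handles the inside and the outside of $A$ at once, so no separate appeal to \eqref{reachAminreachextint} is needed.

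\textbf{Step 1 (uniform charts).} Being a closed hypersurface, $A$ is compact, and being $C^{1,1}$ it is, near each point $a\in A$, the graph of a $C^{1,1}$ function over the tangent hyperplane $\Tan_a A$. Covering $A$ by finitely many such charts and using that the finitely many Lipschitz constants of the occurring gradients admit a common bound, I would extract uniform constants $\rho_0>0$ and $L>0$ with the following property: for every $a\in A$ there is a rigid motion $\Phi_a\colon\R^n\to\R^n$ with $\Phi_a(a)=0$ and $\Phi_a(\Tan_a A)=\R^{n-1}\times\{0\}$ such that
\begin{align*}
	\Phi_a\bigl(A\cap B_{\rho_0}(a)\bigr)\subset\Gamma_a:=\{(v,f_a(v))\mid v\in B^{n-1}_{\rho_0}(0)\},
\end{align*}
where $f_a\colon B^{n-1}_{\rho_0}(0)\to\R$ satisfies $f_a(0)=0$, $\nabla f_a(0)=0$ and $\mathrm{Lip}(\nabla f_a)\le L$. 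Producing these \emph{uniform} $\rho_0$ and $L$ is the only genuinely non-automatic point; everything else is bookkeeping.

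\textbf{Step 2 (in-chart estimate).} Fix $x\in\R^n$ with $d:=\dist(x,A)<\min\{1/L,\rho_0/2\}$ and pick a nearest point $a\in A$ of $x$, which exists since $A$ is compact. As $A$ is a $C^1$ manifold and $a$ minimises $y\mapsto\abs{x-y}$ over $A$, the displacement $x-a$ is orthogonal to the hyperplane $\Tan_a A$; applying $\Phi_a$ we may thus assume $a=0$, $\Tan_a A=\R^{n-1}\times\{0\}$ and $x=(0,\sigma d)$ for some $\sigma\in\{+1,-1\}$. Taylor's Theorem for Lipschitz functions, Theorem \ref{taylortheoremlipschitz}, applied to $f_a$ gives $\abs{f_a(v)}\le\tfrac{L}{2}\abs{v}^2$, so that for every $v\in B^{n-1}_{\rho_0}(0)$ with $v\neq0$
\begin{align*}
	\abs{x-(v,f_a(v))}^2-d^2
	&=\abs{v}^2+f_a(v)^2-2\sigma d\,f_a(v)\\
	&\ge\abs{v}^2-L\,d\,\abs{v}^2=(1-Ld)\abs{v}^2>0.
\end{align*}
Now any nearest point $b$ of $x$ has $\abs{x-b}=d$, hence $b\in\overline{B_d(x)}\subset\overline{B_{2d}(0)}\subset B_{\rho_0}(0)$, so by Step 1 we may write $b=(w,f_a(w))$ with $\abs w\le 2d<\rho_0$; the displayed strict inequality then forces $w=0$, i.e.\ $b=a$. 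Therefore $x$ has a \emph{unique} nearest point in $A$.

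\textbf{Step 3 (conclusion) and main obstacle.} With $\rho:=\min\{1/L,\rho_0/2\}>0$, Step 2 shows $\{x\mid\dist(x,A)<\rho\}\subset\Unp(A)$; in particular, for every $a\in A$ the map $\tilde\xi_A$ is singleton valued on $B_\rho(a)$, so $\reach(A,a)\ge\rho$ for all $a\in A$ and hence $\reach(A)\ge\rho>0$. The only delicate part is Step 1 — checking that the chart radius and the gradient Lipschitz constant can be chosen uniformly over the compact manifold $A$; the threshold $d<1/L$ in Step 2 is sharp for this method, being exactly the distance at which the normal lines emanating from a single graph patch begin to focus, so the argument yields $\reach(A)\ge\min\{1/L,\rho_0/2\}$ and nothing cheaper.
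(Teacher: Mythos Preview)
Your proof is correct and takes a genuinely different route from the paper. The paper's argument estimates $\dist(v-u,\Tan_u A)\le C\abs{u-v}^2$ for all $u,v\in A$ --- using Taylor's Theorem for Lipschitz functions in a chart when $\abs{u-v}$ is small, and the crude bound $\diam(A)\,\delta^{-2}\abs{u-v}^2$ otherwise --- and then invokes Federer's criterion \cite[4.18 Theorem]{Federer1959a} to conclude positive reach. You bypass this criterion entirely and prove the unique-nearest-point property directly: after centering a chart at a nearest point $a$ of $x$, the quadratic Taylor bound $\abs{f_a(v)}\le\tfrac{L}{2}\abs{v}^2$ makes the squared distance strictly increase away from $v=0$, so $a$ is the only nearest point. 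What this buys you is a more self-contained argument with an explicit lower bound $\reach(A)\ge\min\{1/L,\rho_0/2\}$ in terms of the chart data; what the paper's route buys is that the near/far dichotomy is cleaner (no need to localise any candidate competitor $b$ into the chart) and the delicate geometry is offloaded to Federer's theorem. Both proofs rest on the same compactness step --- extracting uniform chart radii and gradient Lipschitz constants --- which you correctly flag as the one non-automatic ingredient; your version needs the slightly stronger form of a uniform tangent-plane chart at \emph{every} point rather than finitely many fixed charts, but this follows routinely from the finite atlas by re-centering.
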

\begin{proof}
	As $A$ is $C^{1,1}$ it can be locally written as a graph of a $C^{1,1}$ function. By compactness of $A$ and Lebesgue's Number Lemma we find $\epsilon,\delta>0$ and a finite number $N$ of functions
	$f_{k}\in C^{1,1}(B_{\epsilon}(0),\R)$, $k=1,\ldots, N$, $B_{\epsilon}(0)\subset \R^{n-1}$ such that for every $a\in A$ the set $A\cap B_{\delta}(a)$ is, 
	after a translation and rotation, covered by the graph of a single $f_{k}$.\\
	\textbf{Step 1}
		Let $u,v\in A$ with $\abs{u-v}\leq \delta$. Then both points lie in the graph of a function $f=f_{k}$ and we can write $u=(x,f(x))$, $v=(y,f(y))$ for $x,y\in B_{\epsilon}(0)$.
		The distance of $v-u$ to $\Tan_{u}A$ is given by the projection of $v-u$ on the normal space $\Nor_{u}A$, i.e.
		\begin{align*}
			\MoveEqLeft
			\dist(v-u,\Tan_{u}A)
			=\Big|\Big\langle \begin{bmatrix} y\\f(y)\end{bmatrix}-\begin{bmatrix} x\\f(x)\end{bmatrix}, 
			\frac{1}{\sqrt{1+\abs{\nabla f(x)}^{2}}} \begin{bmatrix} \nabla f(x)\\-1\end{bmatrix}\Big\rangle\Big|\\
			&=\Big|\frac{(y-x)\nabla f(x)-(f(y)-f(x))}{\sqrt{1+\abs{\nabla f(x)}^{2}}}\Big|
			=\Big|\frac{f(x)-f(y)-\nabla f(x)(x-y)}{\sqrt{1+\abs{\nabla f(x)}^{2}}}\Big|.
		\end{align*}
		By Taylor's Theorem for Lipschitz functions, Theorem \ref{taylortheoremlipschitz}, we can write
		\begin{align*}
			f(x)=f(a)+\nabla f(x)\cdot (x-a)+\int_{0}^{1}(1-s)(x-a)^{T}[\mathrm{Hess}f(a+s(x-a))] (x-a)\ds
		\end{align*}
		and estimate
		\begin{align*}
			\MoveEqLeft
			\dist(v-u,\Tan_{u}A)
			\leq\Big|\int_{0}^{1}(1-s)(x-y)^{T}[\mathrm{Hess}f(y+s(x-y))] (x-y)\ds\Big|\\
			&\leq \norm{\mathrm{Hess}f}_{L^{\infty}(B_{\epsilon}(0))}\abs{x-y}^{2}\leq \norm{\mathrm{Hess}f}_{L^{\infty}(B_{\epsilon}(0))}\abs{v-u}^{2}.
		\end{align*}
	\textbf{Step 2}
		Let $u,v\in A$ with $\abs{u-v}> \delta$. Then $\dist(v-u,\Tan_{u}A)\leq \diam(A)<\infty$, so that
		\begin{align*}
			\dist(v-u,\Tan_{u}A)\leq \diam(A)\leq \frac{\diam(A)}{\delta^{2}}\abs{u-v}^{2}.
		\end{align*}
	\textbf{Step 3}
		All in all we have shown
		\begin{align*}
			\dist(v-u,\Tan_{u}A)\leq \max\Big\{\frac{\diam(A)}{\delta^{2}},\norm{\mathrm{Hess}f_{k}}_{L^{\infty}(B_{\epsilon}(0))}\mid k=1,\ldots, N\Big\}\abs{u-v}^{2},
		\end{align*}
		for all $u,v\in A$. Now the proposition follows with \cite[4.18 Theorem]{Federer1959a}.
\end{proof}

\begin{theorem}[(Taylor's theorem for Sobolev functions)]\label{taylortheoremsobolev}
	Let $I\subset \R$ be a bounded open interval, $k\in\N$. Then for all $f\in W^{k,1}(I)$ and $x,a\in I$ holds
	\begin{align*}
		f(x)=\sum_{i=0}^{k-1}\frac{f^{(i)}(a)}{i!}(x-a)^{i}+\int_{a}^{x}\frac{f^{(k)}(t)}{(k-1)!}(x-t)^{k-1}\dt.
	\end{align*}
\end{theorem}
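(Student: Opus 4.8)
The plan is to argue by induction on $k$, reducing everything to the fundamental theorem of calculus for absolutely continuous functions, which is exactly what $W^{1,1}(I)$ provides. For the base case $k=1$, recall that $f\in W^{1,1}(I)$ on a bounded interval has an absolutely continuous representative, and for this representative $f(x)-f(a)=\int_a^x f'(t)\dt$; this is the desired formula with the empty sum convention and $(x-t)^0=1$. For the inductive step, I would assume the claim holds for $k-1$ and take $f\in W^{k,1}(I)$. Then $f\in W^{k-1,1}(I)$ as well (since $I$ is bounded, so $W^{k,1}\subset W^{k-1,1}$), so by the inductive hypothesis
\begin{align*}
	f(x)=\sum_{i=0}^{k-2}\frac{f^{(i)}(a)}{i!}(x-a)^{i}+\int_{a}^{x}\frac{f^{(k-1)}(t)}{(k-2)!}(x-t)^{k-2}\dt.
\end{align*}
It remains to rewrite the last integral. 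The key step is an integration by parts on the remainder term: since $f^{(k-1)}\in W^{1,1}(I)$ is absolutely continuous with weak derivative $f^{(k)}\in L^1(I)$, and $t\mapsto -\frac{(x-t)^{k-1}}{(k-1)!}$ is a smooth antiderivative of $t\mapsto \frac{(x-t)^{k-2}}{(k-2)!}$, the integration-by-parts formula for a product of two absolutely continuous functions gives
\begin{align*}
	\int_{a}^{x}\frac{f^{(k-1)}(t)}{(k-2)!}(x-t)^{k-2}\dt
	=\frac{f^{(k-1)}(a)}{(k-1)!}(x-a)^{k-1}+\int_{a}^{x}\frac{f^{(k)}(t)}{(k-1)!}(x-t)^{k-1}\dt.
\end{align*}
Substituting this back into the inductive-hypothesis expansion yields precisely the stated formula for $k$, and the induction is complete.

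The only genuine technical point — and hence the main obstacle, though it is a standard one — is justifying the integration by parts at the level of Sobolev/absolutely continuous functions rather than $C^1$ functions: one needs that the product of two absolutely continuous functions on a bounded interval is absolutely continuous, with the Leibniz rule holding for the derivatives a.e., so that the fundamental theorem of calculus applies to the product. This follows from the characterisation $W^{1,1}(I)=AC(I)$ on bounded intervals together with the product rule for $W^{1,1}$ functions (which in turn uses that $W^{1,1}(I)\hookrightarrow L^\infty(I)$ for bounded $I$, so the products $f^{(k-1)}\cdot(x-\cdot)^{k-1}$ and their weak derivatives are integrable). Everything else is bookkeeping: matching the factorials and the powers of $(x-t)$, and observing that all boundary terms at $t=x$ vanish because $(x-t)^{k-1}$ does. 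Once the integration by parts is granted, the argument is purely formal and mirrors the classical proof of Taylor's formula with integral remainder.
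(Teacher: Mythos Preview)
Your proposal is correct and follows essentially the same approach as the paper: induction on $k$, with the base case given by the fundamental theorem of calculus for $W^{1,1}$ functions and the inductive step carried out via integration by parts, the key technical observation being that the product rule (and hence integration by parts) is valid for absolutely continuous functions. The paper's proof is in fact just a one-sentence sketch of exactly this argument, so your write-up is a faithful and more detailed execution of it.
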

\begin{proof}
	We can follow the usual proof by induction using the fundamental theorem of calculus and integration by parts. This is possible, because the product rule, and therefore integration by parts, also holds for
	absolutely continuous, and hence $W^{1,1}$, functions, see \cite[formula (3.4), p.167]{Heinonen2007a}.
\end{proof}

\begin{theorem}[(Taylor's theorem for Lipschitz functions)]\label{taylortheoremlipschitz}
	Let $\Omega\subset \R^{n}$ be open, $k\in\N_{0}$. Then for all $f\in C^{k,1}(\Omega)$ and $x,a\in \Omega$ with $x+[0,1](a-x)\subset \Omega$ holds
	\begin{align*}
		f(x)=\sum_{\abs{\alpha}=0}^{k}\frac{D^{\alpha}f(a)}{\alpha!}(x-a)^{\alpha}+\sum_{\abs{\beta}=k+1}\frac{k+1}{\beta!}\int_{0}^{1}(1-t)^{k}D^{\beta}f(a+t(x-a))(x-a)^{\beta}\dt.
	\end{align*}
\end{theorem}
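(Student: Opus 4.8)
The plan is to reduce the multivariate identity to the one-dimensional Taylor theorem for Sobolev functions, Theorem~\ref{taylortheoremsobolev}, by restricting $f$ to the segment joining $a$ and $x$. Since $S\vcentcolon=\{a+t(x-a):t\in[0,1]\}$ is compact and contained in the open set $\Omega$, one may fix a bounded open interval $I\supset[0,1]$ with $\{a+t(x-a):t\in I\}\subset\Omega$ and set $g(t)\vcentcolon=f(a+t(x-a))$. As $f\in C^{k}(\Omega)$, the classical chain rule (equivalently, iterating the multinomial expansion for the affine inner map) gives $g\in C^{k}(I)$ with
\begin{align*}
	g^{(i)}(t)=\sum_{\abs{\gamma}=i}\frac{i!}{\gamma!}\,D^{\gamma}f(a+t(x-a))\,(x-a)^{\gamma}\qquad\text{for }0\le i\le k.
\end{align*}
For $\abs{\gamma}=k$ the derivative $D^{\gamma}f$ is Lipschitz on a neighbourhood of $S$, so each summand of $g^{(k)}$ is Lipschitz in $t$; hence $g^{(k)}$ is Lipschitz on $I$, that is, $g\in C^{k,1}(I)\subset W^{k+1,1}(I)$.

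Applying Theorem~\ref{taylortheoremsobolev} to $g$ on $I$ with the points $1$ and $0$ yields
\begin{align*}
	f(x)=g(1)=\sum_{i=0}^{k}\frac{g^{(i)}(0)}{i!}+\int_{0}^{1}\frac{g^{(k+1)}(t)}{k!}(1-t)^{k}\dt,
\end{align*}
where $g^{(k+1)}$ is the weak derivative of the absolutely continuous function $g^{(k)}$. Substituting the chain-rule expression for $g^{(i)}(0)$ and using $\frac{1}{i!}\cdot\frac{i!}{\gamma!}=\frac{1}{\gamma!}$ collapses the finite sum to $\sum_{\abs{\gamma}\le k}\frac{D^{\gamma}f(a)}{\gamma!}(x-a)^{\gamma}$, which is the polynomial part of the asserted formula. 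For the remainder one writes $g^{(k+1)}(t)=\frac{d}{dt}g^{(k)}(t)$ for a.e.\ $t$, differentiates each term $D^{\gamma}f(a+t(x-a))$ with $\abs{\gamma}=k$, and reindexes by $\beta=\gamma+e_{j}$; the elementary identity $\sum_{j}\frac{k!}{(\beta-e_{j})!}=\frac{(k+1)!}{\beta!}$ then turns $\frac{1}{k!}\frac{d}{dt}g^{(k)}(t)$ into precisely $\sum_{\abs{\beta}=k+1}\frac{k+1}{\beta!}D^{\beta}f(a+t(x-a))(x-a)^{\beta}$, as required.

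The only genuinely delicate point is the top-order chain rule: for the Lipschitz function $\phi\vcentcolon=D^{\gamma}f$ with $\abs{\gamma}=k$ one needs $\frac{d}{dt}\phi(a+t(x-a))=\nabla\phi(a+t(x-a))\cdot(x-a)$ for a.e.\ $t$, the subtlety being that the merely a.e.-defined gradient of a Lipschitz function, read off along a \emph{fixed} segment, is a priori ambiguous. I would settle this by mollification: with $\phi_{\epsilon}\vcentcolon=\eta_{\epsilon}*\phi$ near $S$ one has $\phi_{\epsilon}\to\phi$ uniformly on $S$, $\nabla\phi_{\epsilon}=\eta_{\epsilon}*\nabla\phi$ bounded by $\norm{\nabla\phi}_{L^{\infty}}$, and $\int_{0}^{1}\nabla\phi_{\epsilon}(a+t(x-a))\cdot(x-a)\dt=\phi_{\epsilon}(x)-\phi_{\epsilon}(a)$ by the smooth chain rule; letting $\epsilon\downarrow 0$ identifies $\phi(x)-\phi(a)$ with the integral over $[0,1]$ of the weak derivative of $t\mapsto\phi(a+t(x-a))$, which is exactly what enters the remainder term. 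Alternatively, one may simply keep the remainder in the $g^{(k+1)}$-form supplied by Theorem~\ref{taylortheoremsobolev} and read the displayed expression in $D^{\beta}f$ as the customary formal chain-rule shorthand for it. The interval extension, the classical chain rule for orders $\le k$, and the multinomial bookkeeping are all routine.
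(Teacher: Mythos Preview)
Your proposal is correct and follows essentially the same route as the paper: both reduce to the one-dimensional Taylor theorem for Sobolev functions (Theorem~\ref{taylortheoremsobolev}) by composing $f$ with the affine parametrisation $h(t)=a+t(x-a)$ of the segment. The paper's proof is a two-line sketch that simply invokes $C^{k,1}\subset W^{k+1,\infty}$ and ``the standard proof,'' whereas you supply the interval extension, the explicit multinomial chain-rule bookkeeping, and a careful treatment of the top-order step (identifying the weak derivative $g^{(k+1)}$ with the formal expression $\sum_{\abs{\beta}=k+1}\tfrac{(k+1)!}{\beta!}D^{\beta}f\cdot(x-a)^{\beta}$ along the segment). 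That last point is a genuine subtlety the paper glosses over, and your mollification argument resolves it cleanly; but the underlying strategy is identical.
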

\begin{proof}
	We always have $C^{k,1}\subset W^{k+1,\infty}$, so that we can use the standard proof that applies Taylor's Theorem in dimension one, Theorem \ref{taylortheoremsobolev}, 
	to $g=f\circ h$ for $h:[0,1]\to\Omega$ with $h(t)=a+t(x-a)$.
	For this it is important that $g\in W^{k,1}([0,1])$, which is clear as $f$ and $h$ are both $C^{k,1}$, hence $g\in W^{k+1,\infty}([0,1])$, and that $[0,1]$ is bounded.
\end{proof}

\section{Steiner formula and sets of positive reach}

\begin{proof}[Proof of Theorem \ref{theoremsteinerformulaiffpoasitivereach}]
		The equivalence of the last three items is Theorem \ref{equivalencepositivereachC11}, Lemma \ref{propertiesofparallelsets} and Lemma \ref{alternativecharacterisationofreach2}
		together with \eqref{reachAminreachextint}.\\
	\textbf{Step 1}
		Let
		\begin{align}\label{SteinerFormulaFederer}
			V((A_{s})_{t})=\sum_{k=0}^{n}{n\choose k}W_{k}(A_{s})t^{k}
		\end{align}
		for all $s\in (-r,r)$ and $-r<s+t<r$. We compute
		\begin{align}\label{computationformulaquermassintegrals}
			\begin{split}
				\MoveEqLeft
				V(A_{s+t})=\sum_{k=0}^{n}{n\choose k}W_{k}(A)(s+t)^{k}
				=\sum_{k=0}^{n}{n\choose k}W_{k}(A)\sum_{i=0}^{k}{k\choose i}s^{k-i}t^{i}\\
				&=\sum_{k=0}^{n}\sum_{i=0}^{k}{n\choose k}{k\choose i}W_{k}(A)s^{k-i}t^{i}
				=\sum_{i=0}^{n}\sum_{k=i}^{n}{n\choose k}{k\choose i}W_{k}(A)s^{k-i}t^{i}\\
				&=\sum_{i=0}^{n}\sum_{k=i}^{n}{n\choose i}{n-i\choose k-i}W_{k}(A)s^{k-i}t^{i}
				=\sum_{i=0}^{n}{n\choose i}\Big(\sum_{k=i}^{n}{n-i\choose k-i}W_{k}(A)s^{k-i}\Big)t^{i}.
			\end{split}
		\end{align}
		By Lemma \ref{propertiesofparallelsets} holds $V((A_{s})_{t})=V(A_{s+t})$ for $s,t>0$ or $s,t<0$ with $\abs{s+t}<r$,
		so that comparing \eqref{SteinerFormulaFederer} with \eqref{computationformulaquermassintegrals} yields 
		\begin{align}\label{forulaforquermasintegrals}
			W_{i}(A_{s})=\sum_{k=i}^{n}{n-i\choose k-i}W_{k}(A)s^{k-i}.
		\end{align}
		According to Lemma \ref{propertiesofparallelsets} we either have $A_{s+t}\subset (A_{s})_{t}$ or $(A_{s})_{t}\subset A_{s+t}$ for $s\in (-r,r)$, $-r<s+t<r$. 
		By \eqref{computationformulaquermassintegrals} we obtain 
			\begin{align}\label{volumeisequal}
			\begin{split}
				\MoveEqLeft
				V((A_{s})_{t})=\sum_{i=0}^{n}{n\choose i}W_{i}(A_{s})t^{i}
				=\sum_{i=0}^{n}{n\choose i}\Big(\sum_{k=i}^{n}{n-i\choose k-i}W_{k}(A)s^{k-i}\Big)t^{i}
				=V(A_{s+t}),
			\end{split}
		\end{align}
		for $s\in (-r,r)$ and $-r<s+t<r$.
		Assume $\reach(\partial A)<r$. Then the reach of $\overline{\interior(\partial A)}=A$ or $\overline{\exterior(\partial A)}=\overline{\R^{n}\backslash A}$ is strictly smaller than $r$. 
		Now, we obtain a contradiction to \eqref{volumeisequal} via Lemma \ref{differencesetcontainsinnerpoints} for $s=\sigma+\tau$, $t=-\tau$ if $\reach(A)<r$ and via
		Lemma \ref{differencesetcontainsinnerpoints2} for $s=-(\sigma+\tau)$, $t=\tau$ in case $\reach(\overline{\R^{n}\backslash A})<r$.\\
	\textbf{Step 2}
		Let the last three items hold. Then according to the second item of Lemma \ref{paralellsurfaces} for $B=A$, $s=t$ and \eqref{reachAminreachextint} we have 
		$\reach(A_{s})\geq \reach(\partial A_{s})\geq r-\abs{s}$ for $s\in (-r,r)$.
		Using Federer's Steiner formula for sets of positive reach, see \cite[5.6 Theorem]{Federer1959a}, we obtain \eqref{SteinerFormulaFederer} for all $s\in (-r,r)$ and $0<t<r-\abs{s}$
		and, obviously, this also holds for $t=0$.
		In a first part we use this to prove \eqref{SteinerFormulaFederer} for $s\in (-r,r)$ and $s\leq s+t<r$. These results are then used in a second part to establish \eqref{SteinerFormulaFederer} 
		for $s\in (-r,r)$ and $-r<s+t<r$.\\
		\textbf{Part 1}
			Making use of Federer's Steiner formula we can do a computation similar to \eqref{computationformulaquermassintegrals} for $V((A_{s+t})_{u})=V((A_{s})_{t+u})$, $0<t<r-\abs{s}$ and $0<u<r-\abs{s}-t$,
			note $t+u>0$, to obtain
			\begin{align}\label{formulaquermassintegralss}
				W_{i}(A_{s+t})=\sum_{k=i}^{n}{n-i\choose k-i}W_{k}(A_{s})t^{k-i}.
			\end{align}
			For $s\in [0,r)$ we already have \eqref{SteinerFormulaFederer} for all $0\leq t<r-s$.
			Let $s\in (-r,0)$. Choose $u\in (0,r-\abs{s})$ and $v\in (0,r-\abs{s+u})$. Now, again using Federer's Steiner formula, we can compute $V((A_{s+u})_{v})$ and substitute \eqref{formulaquermassintegralss}, 
			using the same tricks as in \eqref{computationformulaquermassintegrals} and \eqref{volumeisequal}, to obtain
			\begin{align*}
				\MoveEqLeft
				V((A_{s})_{u+v})=V((A_{s+u})_{v})=\sum_{k=0}^{n}{n \choose k}W_{k}(A_{s+u})v^{k}
				=\sum_{k=0}^{n}{n \choose k}\sum_{j=k}^{n}{n-k \choose j-k}W_{j}(A_{s})u^{j-k}v^{k}\\
				&=\sum_{j=0}^{n}\sum_{k=0}^{j}{j \choose k}{n \choose j}W_{j}(A_{s})u^{j-k}v^{k}
				=\sum_{j=0}^{n}{n \choose j}W_{j}(A_{s})(u+v)^{j}.
			\end{align*}		
			This means we have shown \eqref{SteinerFormulaFederer} for all $s\in (-r,0)$ and $u+v=t\in (0,r-\abs{s}+r-\abs{s+u})$, where
			\begin{align*}
				r-\abs{s}+r-\abs{s+u}=2r-u\geq 2r-(r+s)=r+s
				\quad\text{if }s+u>0
			\end{align*}
			and
			\begin{align*}
				r-\abs{s}+r-\abs{s+u}=2(r+s)+u\geq r+s
				\quad\text{if }s+u\leq 0.
			\end{align*}
			Iteration yields \eqref{SteinerFormulaFederer} for all $s\in (-r,r)$ and $s\leq s+t<r$.\\
		\textbf{Part 2}	
			Let $s\in (-r,r)$ and $-r<s+t<r$. We want to obtain \eqref{SteinerFormulaFederer} for this range of parameters. Choose $0<u$ with $-r<s+t+u<r$ and $0<t+u$.
			As in Part 1 we can use the Steiner formula, now with the extended range from Part 1, to compute $V((A_{s+t})_{u})=V((A_{s})_{t+u})$, which yields \eqref{formulaquermassintegralss} 
			for $s\in (-r,r)$ and $-r<s+t<r$.\footnote{Note that this range could not be covered in the Part 1, because the range of $u$ there is restricted to
			$0<u<r-\abs{s+t}$, so that $V((A_{s+t})_{u})$ can be expanded in $u$ via the Steiner formula. This is why we first had to extend the range to $0<u<r-(s+t)$.}
			This time choose $0<u$ such that $-r<s+t-u$. Then by the Steiner formula from Part 1 holds
			\begin{align*}
				\MoveEqLeft
				V((A_{s})_{t})=V((A_{s+t-u})_{u})=\sum_{k=0}^{n}{n \choose k}\sum_{i=k}^{n}{n-k\choose i-k}W_{i}(A_{s})(t-u)^{i-k}u^{k}\\
				&=\sum_{i=0}^{n}\sum_{k=0}^{i}{i \choose k}{n\choose i}W_{i}(A_{s})(t-u)^{i-k}u^{k}
				=\sum_{i=0}^{n}{n \choose i}W_{i}(A_{s})t^{i}.
			\end{align*}
\end{proof}

\begin{proof}[Proof of Theorem \ref{positivereachandsteinerformula}]
	Except for the differences explained below the proof is the same as for Theorem \ref{theoremsteinerformulaiffpoasitivereach}.
	For the very last part of the analog of Step 1 in the proof of Theorem \ref{theoremsteinerformulaiffpoasitivereach} we assume $\reach(A)<r$ and then obtain a contradiction to
	to \eqref{volumeisequal} via Lemma \ref{differencesetcontainsinnerpoints} for $s=\sigma+\tau$, $t=-\tau$. For the analog of Step 2 it is enough to have $\reach(A)>0$, because we do not
	have to use Lemma \ref{paralellsurfaces}, as we can simply employ \cite[4.9 Corollary]{Federer1959a} to obtain $\reach(A_{s})\geq r-s$ for $s\in (0,r)$. Then we can follow the other steps,
	skipping the middle part, to obtain the desired result.	
\end{proof}

\begin{lemma}[(Parallel surfaces and normals)]\label{paralellsurfaces}
	Let $A$ be a closed hypersurface with $\reach(A)>t>0$. Denote $B\vcentcolon=\overline{\interior(A)}$. 
	\begin{itemize}
		\item
			The mapping $\phi_{t}:A\to  \partial[B_{\pm t}]$, $a\mapsto a\pm t\nu(a)$ is bijective and $\nu(a)=\nu(\phi_{t}(a))$.
		\item
			The boundary $\partial[B_{\pm t}]$ is a $C^{1,1}$ manifold with $\reach(\partial[B_{\pm t}])\geq \reach(A)-t$.
		\item
			If $A$ is the boundary of a convex set with non-empty interior
			we have $\reach(\partial[B_{\pm t}])= \reach(A)\pm t$.
	\end{itemize}
\end{lemma}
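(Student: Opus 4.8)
The plan is to work with $B = \overline{\interior(A)}$, which has $\reach(B) \geq \reach(A) > t$ by \eqref{reachAminreachextint}, and to treat the outer parallel set $\partial[B_t]$ first; the inner parallel set $\partial[B_{-t}]$ is entirely symmetric by swapping the roles of $B$ and $\overline{\exterior(A)}$, since by Lemma \ref{propertiesofparallelsets}(d) we have $\partial[B_{-t}] = \{x \in B \mid \dist(x,\partial B) = t\} = \{x \in \R^n \mid \dist(x, \overline{\exterior(A)}) = t\}$, an outer parallel set of $\overline{\exterior(A)}$. First I would establish that $\phi_t$ maps $A$ into $\partial[B_t]$: for $a \in A$, since $\reach(B,a) > t$ and $\nu(a)$ is the outer normal (which by Proposition \ref{uniquenormalcones} and the defining relation \eqref{definitionouternormal} spans $\Nor_a B$), the point $a + t\nu(a)$ lies in $\Unp(B)$ with $\xi_B(a + t\nu(a)) = a$ by \eqref{tracingthenormal}, hence $\dist(a + t\nu(a), B) = t$ and $a + t\nu(a) \notin B$, so $a + t\nu(a) \in \partial[B_t]$ by Lemma \ref{propertiesofparallelsets}(a). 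Conversely, every $x \in \partial[B_t]$ has $\dist(x,B) = t < \reach(B)$, so $x \in \Unp(B)$, and with $a := \xi_B(x)$ we get $a \in \partial B = A$ and $x - a \in \Nor_a B = [0,\infty)\nu(a)$, whence $x = a + t\nu(a) = \phi_t(a)$; injectivity follows because $\xi_B \circ \phi_t = \id_A$.

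Next I would prove $\nu(a) = \nu(\phi_t(a))$. Write $x = \phi_t(a)$. Since $a \in A$ has a point $x_2 \in \exterior(A)$ with $\xi_A(x_2) = a$ and $a$ between, the open ball $B_t(x)$ lies in $\exterior(B) \cup (\text{thin shell})$—more directly, because $\xi_B(x) = a$ and $\reach(B,a) > t$, the ball $B_t(x)$ meets $B$ only in $\{a\}$, hence $B_t(x) \cap \interior(B_t) $ contains no boundary point of $B_t$ other than through $x$; the standard argument is that the outer normal of $B_t$ at $x$ must be $(x-a)/t = \nu(a)$, since $x - a \in \Nor_a B$ forces, by the ball condition, $\Nor_x \overline{\interior(B_t)} = [0,\infty)\nu(a)$. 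For the reach estimate $\reach(\partial[B_t]) \geq \reach(A) - t$: again split via \eqref{reachAminreachextint} into $\reach(\overline{\interior(B_t)})$ and $\reach(\overline{\exterior(B_t)})$. The first is bounded below by $\reach(B) - 0$ trivially is not quite it—rather $\overline{\interior(B_t)} = B_t$ and $\reach(B_t) \geq \reach(B) - t \geq \reach(A) - t$ by \cite[4.9 Corollary]{Federer1959a}; the second, $\reach(\overline{\exterior(B_t)})$, I would get from the characterisation in Lemma \ref{alternativecharacterisationreach}: two normals of $\overline{\exterior(B_t)}$ emanating from distinct points $x_1 = \phi_t(a_1)$, $x_2 = \phi_t(a_2)$ point in directions $-\nu(a_1)$, $-\nu(a_2)$, and if they met at distance $< \reach(A) - t$ from $\partial[B_t]$, then tracing back along $\nu$ by an extra $t$ produces an intersection of the normals of $B$ at $a_1 \neq a_2$ within distance $< \reach(A)$, contradicting Lemma \ref{alternativecharacterisationreach} for $B$. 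That $\partial[B_t]$ is $C^{1,1}$ then follows from Theorem \ref{equivalencepositivereachC11} once we know it is a closed hypersurface, which it is as the homeomorphic image of $A \cong \Sphere^{n-1}$ under the bijection $\phi_t$ (a homeomorphism by continuity of $a \mapsto a + t\nu(a)$, using Lemma \ref{normalscontinuous}, and compactness).

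For the third item, when $A = \partial C$ with $C$ convex and $\interior(C) \neq \emptyset$, the inclusions become equalities. Here $B = C$ and $B_t = C_t$ is convex with $\reach(C_t) = \infty$, so $\reach(\partial[C_t])$ is governed entirely by $\reach(\overline{\exterior(C_t)})$; the sharp statement $\reach(\partial[C_t]) = \reach(A) + t$ comes from the fact that for a convex body the reach of the boundary equals the reach of the complement, which in turn equals the supremum of radii $\rho$ such that $C$ satisfies an interior rolling ball condition of radius $\rho$—and rolling a ball of radius $\rho$ inside $C$ is the same as rolling a ball of radius $\rho + t$ inside $C_t$. For the inner side, $\partial[C_{-t}]$ bounds the convex body $C_{-t}$, whose boundary reach I would compute as $\min\{\reach(C_{-t}), \reach(\overline{\R^n \setminus C_{-t}})\}$; the first term is now the finite one (the inradius-type obstruction shrinks) and equals $\reach(\partial C) - t$ by the rolling-ball description applied in reverse. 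I expect the main obstacle to be exactly this third item: making the rolling-ball / interior-sphere-condition argument precise and symmetric for both $\pm t$ and both convex bodies $C$ and its complement, and in particular verifying that no new, smaller obstruction to the reach is created by the parallel operation on the convex side—for the non-convex items the machinery of Lemma \ref{alternativecharacterisationreach} and \cite[4.9 Corollary]{Federer1959a} does the work cleanly, but the equality in item three needs the convexity-specific fact that $\reach(\partial C) = \reach(\overline{\R^n \setminus C})$ for convex bodies.
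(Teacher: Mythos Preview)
Your plan for the first two items is essentially the paper's own route, only more verbose. The paper gets injectivity of $\phi_t$ straight from Lemma \ref{alternativecharacterisationreach}, surjectivity exactly as you do via $\xi_A$ (equivalently $\xi_B$ on $\partial B=A$), and the coincidence of normals from \eqref{normalconefederer12} together with \cite[4.9 Corollary]{Federer1959a} (the latter gives $\reach(B_t)\geq\reach(B)-t>0$, after which \eqref{normalconefederer12} identifies the outward direction at $\phi_t(a)$ as $\nu(a)$). For the reach bound the paper does \emph{not} split via \eqref{reachAminreachextint}; it applies Lemma \ref{alternativecharacterisationreach} directly to $\partial[B_{\pm t}]$: once $\nu(\phi_t(a))=\nu(a)$ is known, the normal \emph{lines} of $\partial[B_{\pm t}]$ are literally the normal lines of $A$, so any intersection at distance $<\reach(A)-t$ from $\partial[B_{\pm t}]$ would be an intersection at distance $<\reach(A)$ from $A$. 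Your split into $\reach(B_t)$ and $\reach(\overline{\exterior(B_t)})$ is correct but an unnecessary detour. The $C^{1,1}$ conclusion via Theorem \ref{equivalencepositivereachC11} is the same in both.

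Where you genuinely diverge is the third item, and here you are making life harder than necessary. The paper's argument is two lines: since $B$ is convex, so is $B_{\pm t}$ \cite[\S 6, p.17]{Hadwiger1955a}, hence $\reach(B_{\pm t})=\infty$; by \eqref{reachAminreachextint} this gives $\reach(\partial[B_{\pm t}])=\reach(\overline{\R^{n}\setminus B_{\pm t}})$, and then Lemma \ref{alternativecharacterisationreach} finishes. Concretely, the inward normal rays of $\overline{\R^{n}\setminus B_{\pm t}}$ at $\phi_{\pm t}(a)$ are $a\pm t\nu(a)-[0,\infty)\nu(a)$, i.e.\ the \emph{same} rays as the inward normals of $\overline{\R^{n}\setminus B}$ at $a$, with base point shifted by $\pm t$ along $\nu(a)$; their first mutual intersection therefore sits at distance exactly $\reach(A)\pm t$ from $\partial[B_{\pm t}]$ instead of $\reach(A)$ from $A$. (That no \emph{new} intersection is created on the added segments for $+t$ is already contained in $\reach(B)=\infty>t$ via Lemma \ref{alternativecharacterisationreach}.) So your anticipated ``main obstacle'' evaporates once you keep the normal-line picture from item one; the rolling-ball reformulation would also work but is not needed.
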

\begin{proof}
	That $\phi_{t}$ is injective is a direct consequence of Lemma \ref{alternativecharacterisationreach}.
	On the other hand we have $\xi_{A}(x)=\vcentcolon a\in A$ for every $x\in \partial[B_{\pm t}]$ and hence $x-a\in\Nor_{a}A$, so that
	$x=a+t(x-a)/\abs{x-a}=\phi_{t}(a)$. The coincidence of normals is a consequence of \eqref{normalconefederer12} and \cite[4.9 Corollary]{Federer1959a}. 
	From the alternative characterisation of $\reach$ in Lemma \ref{alternativecharacterisationreach} we infer the estimate for $\reach(\partial[B_{\pm t}])$.
	Now let $A$ be the boundary of a closed convex set $B$ with non-empty interior. As $B_{\pm t}$ is convex, 
	see \cite[\S 6,p.17]{Hadwiger1955a}, it is clear that $\reach(B_{\pm t})=\infty$, so that the formula for $\reach(\partial[B_{\pm t}])$ follows from 
	Lemma \ref{alternativecharacterisationreach} and  \eqref{reachAminreachextint}.
	The $C^{1,1}$ regularity is a consequence of Theorem \ref{equivalencepositivereachC11}.
\end{proof}

\subsection{Hadwiger's Problem}

\begin{proof}[Proof of Theorem \ref{bigequivalence}]
	The equivalence of the first three items is actually shown in \cite[proof of Theorem 1.1]{Hernandez-Cifre2010b} and the equivalence of the last two items is Theorem \ref{theoremsteinerformulaiffpoasitivereach}.\\
	\textbf{Step 1}
		Let $K=(K_{-r})_{r}$ and $x\in B_{r}(\partial K)$. If $x\in \exterior(\partial K)$ we have a unique projection $\xi_{\partial K}(x)$, so let $x\in\interior(\partial K)$.
		We know that $K_{-r}$ is convex and, as $x\in \exterior(\partial(K_{-r}))\cup \partial(K_{-r})$, we have a unique projection $y=\xi_{\partial (K_{-r})}(x)$. Let $\{z\}=[0,\infty)(x-y)\cap \partial K$.
		Then $\xi_{\partial(K_{-r})}(z)=y$ by \eqref{tracingthenormal}, as $K_{-r}$ is convex and hence $\reach(K_{-r})=\infty$. Then $B_{r}(y)\subset K$ and $\abs{z-y}=r$.
		This means $z\in \tilde\xi_{\partial K}(y)$ and consequently $\xi_{\partial K}(x)=z$, see Lemma \ref{projectionproperties}. Therefore $\reach(\partial K)\geq r$.\\
	\textbf{Step 2}
		Let $\reach(\partial K)\geq r$. Then according to Theorem \ref{theoremsteinerformulaiffpoasitivereach}, we have a Steiner formula for every $K_{s}$, $s\in (-r,r)$. This directly yields
		\eqref{forulaforquermasintegrals} and $W_{i}'(s)=(n-i)W_{i+1}(s)$ for the querma{\ss}integrals. Hence $K\in \mathcal{R}_{n-1}(r)$.
\end{proof}

\subsection{Gradient flow of mean breadth}\label{sectiongradientflow}

Before we start to prove \eqref{gradientflowede} in Proposition \ref{gradientflowmeanbreadth} we should at least, very briefly, explain the notation that is specific to gradient flows on metric spaces.
For a curve $x:I\to X$ from an interval $I$ to a metric space $X$ we define the \emph{metric derivative} $\abs{\dot x (t_{0})}$ at a point $t_{0}\in I$ by
\begin{align*}
	\abs{\dot x (t_{0})}\vcentcolon=\lim_{\substack{t\to t_{0}\\t\in I}}\frac{d(x(t),x(t_{0}))}{\abs{t-t_{0}}}
\end{align*}
if this limit exists. The \emph{slope} $\abs{\nabla F}(x_{0})$ of map $F:X\to \R$ at a point $x_{0}\in X$ is set to be
\begin{align*}
	\abs{\nabla F}(x_{0})\vcentcolon=\limsup_{x\to x_{0}}\frac{(F(x_{0})-F(x))_{+}}{d(x_{0},x)},
\end{align*}
where $(a)_{+}\vcentcolon=\max\{a,0\}$ for $a\in\R$. A curve $x:I\to X$ in a metric space $(X,d)$ is called \emph{absolutely continuous} if there is a function
$f\in L^{1}(I)$ such that
\begin{align*}
	d(x(s),x(t))\leq \int_{s}^{t}f(y)\dd y\quad\text{for all }s,t\in I\text{ with }s<t.
\end{align*}

\begin{lemma}[(Computation of the slope $\abs{\nabla W_{i}}$)]\label{computationofslopequermass}
	For all $K\in\mathcal{K}^{1,1}$ we have $\abs{\nabla W_{i}}(K)=(n-i)W_{i+1}(K)$ for $i=0,\ldots, n-1$.
\end{lemma}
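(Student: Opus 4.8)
The statement asserts that for $K \in \mathcal{K}^{1,1}$ the slope of the querma{\ss}integral functional $W_i$ at $K$ (with respect to the Hausdorff metric) equals $(n-i)W_{i+1}(K)$. By definition
\[
	\abs{\nabla W_{i}}(K) = \limsup_{L\to K}\frac{(W_{i}(K)-W_{i}(L))_{+}}{\distH(K,L)}.
\]
The plan is to use the outer and inner parallel sets of $K$ as the ``test competitors'' realizing the limsup, and then to show that no sequence does better. Since $K\in\mathcal{K}^{1,1}$ we have $\reach(\partial K)>0$ by Theorem~\ref{equivalencepositivereachC11}, so Theorem~\ref{bigequivalence} (or Theorem~\ref{theoremsteinerformulaiffpoasitivereach}) gives a two-sided Steiner formula: for $s$ in a neighbourhood of $0$,
\[
	W_{i}(K_{s}) = \sum_{k=i}^{n}\binom{n-i}{k-i}W_{k}(K)\,s^{k-i},
\]
which is differentiable at $s=0$ with $\frac{d}{ds}\big|_{0}W_{i}(K_{s}) = (n-i)W_{i+1}(K)$. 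This is the key input that the inner parallel bodies behave polynomially — it is exactly what fails for general convex bodies and is the reason the hypothesis $K\in\mathcal{K}^{1,1}$ is needed.

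\textbf{Lower bound.} First I would produce competitors $L = K_{-s}$ for small $s>0$. One has $\distH(K, K_{-s}) = s$ (the boundary moves inward by exactly $s$, since $\reach(\partial K)>s$ guarantees the inner parallel body is nonempty with boundary at distance exactly $s$, using Lemma~\ref{propertiesofparallelsets}(d)). Moreover $W_i$ is monotone under inclusion in the relevant sense, so $W_i(K) - W_i(K_{-s}) \geq 0$ for $i \le n-1$ (monotonicity of querma{\ss}integrals for convex bodies). Hence
\[
	\abs{\nabla W_{i}}(K) \ge \limsup_{s\downarrow 0}\frac{W_{i}(K)-W_{i}(K_{-s})}{s} = -\frac{d}{ds}\Big|_{0^{+}}W_{i}(K_{-s}) = (n-i)W_{i+1}(K),
\]
where the last equality is the differentiated Steiner formula above. (If $W_{i+1}(K)=0$ the inequality is trivial; otherwise for small $s$ the numerator is genuinely positive.)

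\textbf{Upper bound.} This is the main obstacle. I need: for \emph{every} sequence $L_j \to K$ in Hausdorff distance,
\[
	\limsup_{j}\frac{(W_{i}(K)-W_{i}(L_{j}))_{+}}{\distH(K,L_{j})} \le (n-i)W_{i+1}(K).
\]
Write $\epsilon_j = \distH(K, L_j)$. Then $L_j \subseteq K_{\epsilon_j}$, and since $W_i$ is monotone, $W_i(L_j) \le W_i(K_{\epsilon_j})$; but this points the wrong way for bounding $W_i(K) - W_i(L_j)$ from above. Instead I use the other Hausdorff inclusion: $K \subseteq (L_j)_{\epsilon_j}$, hence $K_{-\epsilon_j} \subseteq L_j$ (monotonicity of inner parallel bodies, cf. Lemma~\ref{propertiesofparallelsets}), so by monotonicity of $W_i$,
\[
	W_{i}(L_{j}) \ge W_{i}(K_{-\epsilon_{j}}),
\]
whence $W_i(K) - W_i(L_j) \le W_i(K) - W_i(K_{-\epsilon_j})$. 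Dividing by $\epsilon_j \to 0$ and invoking once more the differentiability of $s\mapsto W_i(K_s)$ at $0$ from the two-sided Steiner formula, the right-hand side tends to $(n-i)W_{i+1}(K)$, which gives the upper bound. The one delicate point to check carefully is the inclusion ``$K\subseteq (L_j)_{\epsilon_j} \Rightarrow K_{-\epsilon_j}\subseteq L_j$'' together with the fact that $K_{-\epsilon_j}$ has the expected volume/querma{\ss}integrals — this is where I would lean on Lemma~\ref{propertiesofparallelsets} and on the convexity of $K$ to keep the inner parallel body well-behaved; for $\epsilon_j$ small ($\epsilon_j < \reach(\partial K)$) Theorem~\ref{bigequivalence} ensures $K_{-\epsilon_j}$ is nonempty and $(K_{-\epsilon_j})_{\epsilon_j}=K$, so no degeneracy occurs. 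Combining the lower and upper bounds yields $\abs{\nabla W_{i}}(K) = (n-i)W_{i+1}(K)$.
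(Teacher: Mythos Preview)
Your argument is correct and is essentially the paper's own proof: both identify $K_{-t}$ as the extremal competitor in the Hausdorff ball around $K$ via monotonicity of the querma{\ss}integrals (your inclusion $K_{-\epsilon_j}\subset L_j$ is exactly the step the paper records as ``for $\distH(K,L)=t$ is $(W_{i}(K)-W_{i}(L))_{+}\leq W_{i}(K)-W_{i}(K_{-t})$''), and both then differentiate the two-sided Steiner expansion of $W_i(K_s)$ at $s=0$ using Theorem~\ref{bigequivalence}. The only point you might make explicit is that $K_{-s}\in\mathcal{K}^{1,1}$ for small $s$ (Lemma~\ref{paralellsurfaces}), so it is an admissible competitor for the slope on $(\mathcal{K}^{1,1},\distH)$.
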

\begin{proof}
	Let $t<\reach(\partial K)$. 
	According to \cite[Theorem 6.13 (iv), p.105]{Gruber2007a} the querma{\ss}integrals $W_{i}$ are monotonic with regard to inclusion,
	i.e. for $L\subset K$
	we have $W_{i}(L)\leq W_{i}(K)$. Hence the set in $\overline B_{t}(K)\cap \mathcal{K}^{1,1}$ with least $W_{i}$ is $K_{-t}$. Here $\overline B_{t}(K)$ is the closed ball about
	$K$ with regard to the Hausdorff metric.
	We compute
	\begin{align*}
		\sup_{L\in \overline B_{t}(K)\cap \mathcal{K}^{1,1}}(W_{i}(K)-W_{i}(L))_{+}
		=W_{i}(K)-W_{i}(K_{-t})
	\end{align*}
	and consequently with the help of Theorem \ref{bigequivalence}
	\begin{align*}
		\MoveEqLeft
		\abs{\nabla W_{i}}(K)=\limsup_{\substack{L\to K\\L\in\mathcal{K}^{1,1}}}\frac{(W_{i}(K)-W_{i}(L))_{+}}{\distH(K,L)}
		=\limsup_{t\to 0}\frac{W_{i}(K)-W_{i}(K_{-t})}{t}\\
		&=W_{i}'(0)=(n-i)W_{i+1}(0)
		=(n-i)W_{i+1}(K).
	\end{align*}
	Notice for $\distH(K,L)=t$ is $(W_{i}(K)-W_{i}(L))_{+}\leq W_{i}(K)-W_{i}(K_{-t})$.
\end{proof}

\begin{proof}[Proof of Proposition \ref{gradientflowmeanbreadth}]
	We have $\distH(x(s),x(t))=\omega_{n} (t-s)$ for $s<t$, so that $x$ is absolutely continuous. For $u\in (0,\omega_{n}^{-1}\reach(\partial K))$ holds
	\begin{align*}
		\abs{\dot x(u)}=\lim_{h\to 0}\frac{\distH(x(u+h),x(u))}{\abs{h}}=\frac{\omega_{n}\abs{h}}{\abs{h}}=\omega_{n}.
	\end{align*}
	By Lemma \ref{computationofslopequermass} we already know $\abs{\nabla W_{n-1}}(C)=W_{n}(C)=\omega_{n}$ for all $C\in\mathcal{K}^{1,1}$ and together with 
	\begin{align*}
		W_{n-1}(K_{-t})=W_{n-1}((K_{-t})_{t})-W_{n}(K_{-t})t=W_{n-1}(K)-\omega_{n}t,
	\end{align*}
	from the usual expansion \eqref{forulaforquermasintegrals} of $W_{i}$ with $(K_{-t})_{t}=K$ from the proof of Theorem \ref{theoremsteinerformulaiffpoasitivereach}, we have proven \eqref{gradientflowede}.\\

	Clearly $x(t)\to x(T)$ for $t\to T$ and $x(T)$ is a compact, convex set and hence either contained in a lower dimensional affine subspace or it has non-empty interior. 
	Assume that $x(T)$ has non-empty interior and $\partial x(T)$ has positive reach. Then, by Theorem \ref{equivalencepositivereachC11}, $\partial x(T)$ is of class $C^{1,1}$ 
	and we must have $\nu_{\partial K}(a)=\nu_{\partial K_{-\omega_{n}T}}(a-\omega_{n}T\nu_{\partial K}(a))$ for all $a\in \partial K$. Thus, we obtain a contradiction to $\omega_{n}T=\reach(\partial K)$ in the representation of 
	Lemma \ref{alternativecharacterisationreach}, because there must be an $\epsilon$ neighbourhood 
	of $\partial x(T)$, where the normals cannot intersect, as $\reach(\partial x(T))>0$.
\end{proof}

\begin{appendix}

\section{Querma{\ss}integrals as mean curvature integrals}

\begin{lemma}[(Querma{\ss}integrals as mean curvature integrals)]\label{quermassintegralsasmeancurvatureintegrals}
	Let $A\subset \R^{n}$, $\partial A$ a closed hypersurface with $\reach(\partial A)>0$. Then
	\begin{align}\label{quermassintegralsasmeancurvatureintegral}
		W_{i}(A)=n^{-1}\int_{\partial A}H_{i-1}^{(n-1)}(\kappa_{1},\ldots,\kappa_{n-1})\dd\HM^{n-1},
	\end{align}
	where $H_{j}^{(k)}$ is the $j$th elementary symmetric polynomial in $k$ variables, i.e.
	\begin{align*}
		H_{j}^{(k)}(x_{1},\ldots,x_{k})\vcentcolon={k \choose j}^{-1}\sum_{1\leq l_{1}<\ldots < l_{j}\leq k}x_{l_{1}}\dots x_{l_{j}}
	\end{align*}
	for $j=1,\ldots,k$ and $H_{0}^{(k)}=1$.
\end{lemma}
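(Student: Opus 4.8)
The plan is to obtain a second, purely geometric expression for the volume $V(A_{s})$ of the outer parallel sets by a change of variables in the tubular neighbourhood of $\partial A$, and then to read off the querma{\ss}integrals by comparing this expression term by term with Federer's Steiner formula $V(A_{s})=\sum_{k=0}^{n}\binom{n}{k}W_{k}(A)s^{k}$, which is valid for $0\le s<\reach(\partial A)$ by \cite[5.6 Theorem]{Federer1959a}. By Theorem~\ref{equivalencepositivereachC11}, $\partial A$ is a closed $C^{1,1}$ hypersurface, so the outer unit normal field $\nu\colon\partial A\to\Sphere^{n-1}$, which is well defined by Proposition~\ref{uniquenormalcones}, is Lipschitz; Rademacher's theorem then gives, for $\HM^{n-1}$-a.e.\ $a\in\partial A$, a differential $d\nu_{a}$ that is a self-adjoint endomorphism of $\Tan_{a}(\partial A)$ whose eigenvalues are exactly the principal curvatures $\kappa_{1}(a),\dots,\kappa_{n-1}(a)$ of the statement. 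Since $\partial A$ has positive reach, rolling a ball of radius $\reach(\partial A)$ along $\partial A$ on either side (cf.\ Lemma~\ref{projectionissurjectiveonUbackslashA} and \eqref{reachAminreachextint}) and comparing second fundamental forms at the a.e.\ point where $\partial A$ is twice differentiable bounds the principal curvatures by $\abs{\kappa_{i}}\le\reach(\partial A)^{-1}$, whence $1+t\kappa_{i}>0$ for all $0\le t<\reach(\partial A)$.

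Fix $0\le s<\reach(\partial A)$ and consider the Lipschitz normal map $F\colon\partial A\times[0,s]\to\R^{n}$, $F(a,t)\vcentcolon=a+t\nu(a)$. By Lemma~\ref{paralellsurfaces} (bijectivity of $\phi_{t}$ together with $\nu(a)=\nu(\phi_{t}(a))$) and the fact that every $x\notin A$ with $\dist(x,\partial A)\le s$ has a unique foot point $\xi_{\partial A}(x)\in\partial A$ with $x-\xi_{\partial A}(x)\in[0,\infty)\nu(\xi_{\partial A}(x))$, the map $F$ is injective with image $A_{s}\setminus\interior(A)$; as $\partial A$ is a Lipschitz hypersurface, $\HM^{n}(\partial A)=0$, so $\HM^{n}(F(\partial A\times[0,s]))=V(A_{s})-V(A)$. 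Evaluating $dF_{(a,t)}$, at an a.e.\ point $(a,t)$, in an orthonormal eigenbasis $e_{1},\dots,e_{n-1}$ of $d\nu_{a}$ gives $dF_{(a,t)}e_{i}=(1+t\kappa_{i})e_{i}$ and $dF_{(a,t)}\partial_{t}=\nu(a)$, so these $n$ vectors are mutually orthogonal and the Jacobian is $J_{F}(a,t)=\prod_{i=1}^{n-1}(1+t\kappa_{i})>0$. The area formula for Lipschitz maps together with Fubini's theorem on $\partial A\times[0,s]$ then yields
\begin{align*}
	V(A_{s})=V(A)+\int_{\partial A}\int_{0}^{s}\prod_{i=1}^{n-1}(1+t\kappa_{i})\dt\dd\HM^{n-1}.
\end{align*}

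It remains to expand and compare coefficients. From $\prod_{i=1}^{n-1}(1+t\kappa_{i})=\sum_{j=0}^{n-1}\binom{n-1}{j}H_{j}^{(n-1)}(\kappa_{1},\dots,\kappa_{n-1})t^{j}$ and integration in $t$, the last display becomes
\begin{align*}
	V(A_{s})=V(A)+\sum_{j=0}^{n-1}\frac{1}{j+1}\binom{n-1}{j}\Big(\int_{\partial A}H_{j}^{(n-1)}(\kappa_{1},\dots,\kappa_{n-1})\dd\HM^{n-1}\Big)s^{j+1}.
\end{align*}
Matching with $V(A_{s})=\sum_{k=0}^{n}\binom{n}{k}W_{k}(A)s^{k}$, the constant term gives $W_{0}(A)=V(A)$, and for $k=1,\dots,n$ (so $j=k-1$) the elementary identity $\frac{1}{k}\binom{n-1}{k-1}=\frac{1}{n}\binom{n}{k}$ gives exactly $W_{k}(A)=n^{-1}\int_{\partial A}H_{k-1}^{(n-1)}(\kappa_{1},\dots,\kappa_{n-1})\dd\HM^{n-1}$, which is \eqref{quermassintegralsasmeancurvatureintegral}. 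The step I expect to be the main obstacle is the almost-everywhere Jacobian computation: one must set up the generalized second fundamental form of the merely $C^{1,1}$ surface rigorously (Rademacher applied to $\nu$, self-adjointness of $d\nu_{a}$, and the identification of its eigenvalues with the $\kappa_{i}$ of the statement), control the exceptional null sets so that the area formula genuinely reproduces the identity of the previous paragraph, and keep the positivity $1+t\kappa_{i}>0$ — which is what ties the whole computation to the range $0\le s<\reach(\partial A)$.
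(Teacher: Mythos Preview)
Your proof is correct and follows essentially the same route as the paper: compute the volume of the outer tube $A_{s}\setminus A^{\circ}$ via the area formula for the Lipschitz normal map, expand the Jacobian $\prod_{i}(1+t\kappa_{i})$ in elementary symmetric polynomials, integrate in $t$, and match coefficients with Federer's Steiner polynomial. The only cosmetic difference is that the paper performs the Jacobian computation in local graph coordinates (writing $\partial A$ as $\mathrm{graph}(f)$ with $f\in C^{1,1}$ and manipulating the first and second fundamental forms explicitly, then patching via a partition of unity), whereas you work intrinsically with the a.e.\ Weingarten map $d\nu_{a}$ in its eigenbasis; both arrive at the same product $\prod_{i=1}^{n-1}(1+t\kappa_{i})$ and the same coefficient comparison.
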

\begin{proof}
	By Lemma \ref{closedhypersurfacesareC1} we can write $\partial A$ locally as the graph of a function $f\in C^{1,1}(\Omega,\R)$, $\Omega\subset\R^{n-1}$. 
	Note, that the Hessian $\Hess f$ of $f$ is symmetric almost everywhere.
	For $\rho<\reach(\partial A)$ we define the mapping
	\begin{align*}
		\Phi:\Omega\times(0,\rho)\to\R^{n},\,\begin{bmatrix}x\\t\end{bmatrix}\mapsto \begin{bmatrix}x\\f(x)\end{bmatrix}
		+t(1+\abs{\nabla f(x)}^{2})^{-1/2}\begin{bmatrix}\nabla f(x)\\-1\end{bmatrix},
	\end{align*}
	which is bijective onto its image. The vector after the factor $t$ is equal to $\nu((x,f(x)))$. As $f$ and $\nu$ are Lipschitz continuous, the same holds for $\Phi$.
	This means we can extend $\Phi$ to a Lipschitz mapping on the whole $\R^{n-1}$ by Kirszbraun's Theorem \cite[2.10.42 Theorem, p.201]{Federer1969a}
	and then use the area formula \cite[3.2.3 Theorem, p.243]{Federer1969a} to compute
	\begin{align*}
		\HM^{n}(\Phi(\Omega\times (0,\rho) ))=\int_{\Omega\times(0,\rho)}\abs{\det(\text{D}\Phi(y))}\dy.
	\end{align*}
	For the Jacobian matrix $\text{D}\Phi$ we obtain
	\begin{align*}
		\MoveEqLeft
		\text{D}\Phi(x,t)
		=\begin{bmatrix}E_{n-1}&0_{(n-1)\times 1}\\\nabla f(x)^{T}&0_{1\times 1}\end{bmatrix}
		+t\begin{bmatrix}(\partial_{1}\phi(x))\begin{bmatrix}\nabla f(x)\\-1\end{bmatrix}&\dots (\partial_{n-1}\phi(x))\begin{bmatrix}\nabla f(x)\\-1\end{bmatrix}&0_{n\times 1} \end{bmatrix}\\
		&+t\phi(x)\begin{bmatrix}\Hess f(x)&0_{(n-1)\times 1}\\0_{1\times(n-1)}&0_{1\times 1}\end{bmatrix}+\phi(x)\begin{bmatrix}0_{(n-1)\times(n-1)}&\nabla f(x)\\0_{1\times (n-1)}&-1\end{bmatrix}
	\end{align*}
	for almost every $x\in\Omega$, where $E_{k}$ is the identity matrix of size $k$ and we abbreviate $\phi(x)\vcentcolon=(1+\abs{\nabla f(x)}^{2})^{-1/2}$. 
	Now we can use the last column of the last matrix to eliminate the whole second matrix and the last row of the first matrix, to obtain a matrix
	\begin{align*}
		\begin{bmatrix}E_{n-1}+\nabla f(x) [\nabla f(x)]^{T}+t\phi(x)\Hess f(x)&\;\;&\phi(x)\nabla f(x)\\0_{1\times(n-1)}&\;\;&-\phi(x)\end{bmatrix}
	\end{align*}
	with the same determinant as $\text{D}\Phi(x)$. For the surface described by the graph of $f$ the metric tensor is given by $B\vcentcolon= E_{n-1}+\nabla f(x) [\nabla f(x)]^{T}$
	and the curvature tensor by $C\vcentcolon=\phi(x)\Hess f(x)$, note $\det(B)=1+\abs{\nabla f(x)}^{2}=\phi(x)^{-2}$. This means the eigenvalues of $M\vcentcolon= B^{-1}C$ 
	are the principal curvatures $\kappa_{i}$, so that the eigenvalues of $E_{n-1}+t M$ are $1+t\kappa_{i}$. Hence
	\begin{align*}
		\MoveEqLeft
		\det(\text{D}\Phi)=\det \Big(\begin{bmatrix}B+t C&\;\;& \phi\nabla f\\0_{1\times (n-1)}&\;\;&-\phi  \end{bmatrix}\Big)
		=\phi\det \Big(\begin{bmatrix}B(E_{n-1}+t M)&\;\;& \nabla f\\0_{1\times (n-1)}&\;\;&-1  \end{bmatrix}\Big)\\
		&=-\phi\det(B)\det(E_{n-1}+tM)=-\det(B)^{1/2}\prod_{i=1}^{n-1}(1+t\kappa_{i})\\
		&=-\det(B)^{1/2}\Big(\sum_{i=0}^{n-1}{n-1\choose i}H^{(n-1)}_{i}(\kappa_{1},\ldots,\kappa_{n-1})t^{i}\Big).
	\end{align*}	
	Therefore
	\begin{align*}
		\MoveEqLeft
		\HM^{n}(\Phi(\Omega\times (0,\rho) ))=\int_{\Omega\times(0,\rho)}\abs{\det(\text{D}\Phi(y))}\dy\\
		&=\sum_{i=0}^{n-1}\frac{1}{i+1}{n-1\choose i}\int_{\Omega}H^{(n-1)}_{i}(\kappa_{1},\ldots,\kappa_{n-1})\rho^{i+1}\det(B)^{1/2}\dx\\
		&=\sum_{j=1}^{n}{n\choose j}n^{-1}\int_{\mathrm{graph}(f)}H^{(n-1)}_{j-1}(\kappa_{1},\ldots,\kappa_{n-1})\dd\HM^{n-1}\; \rho^{j}.
	\end{align*}
	Adding $\HM^{n}(A)$ and using a covering of $\partial A$ by graphs together with the appropriate partition of unity we obtain
	\begin{align*}
		V(A_{\rho})=\HM^{n}(A_{\rho})=\HM^{n}(A)+\sum_{j=1}^{n}{n\choose j}n^{-1}\int_{\partial A}H^{(n-1)}_{j-1}(\kappa_{1},\ldots,\kappa_{n-1})\dd \HM^{n-1}\; \rho^{j}
	\end{align*}
	comparing this with the Steiner formula \eqref{steinerformulaintroduction} yields \eqref{quermassintegralsasmeancurvatureintegral}.
\end{proof}

\begin{remark}[(Mean breadth for $n=2$ and $n=3$)]
	In the special cases of dimension $n=2$ and $n=3$ the statement of Lemma \ref{quermassintegralsasmeancurvatureintegrals} for $i=n-1$ is
	\begin{align}
			W_{n-1}(K)&=2^{-1}\HM^{1}(\partial K)\quad\text{for }n=2,\nonumber\\
			W_{n-1}(K)&=3^{-1}\int_{\partial K}H\dd\HM^{2}\quad\text{for }n=3,\label{curvaturemeasuren=23}
	\end{align}
	where $H$ is the usual mean curvature. The coefficient $W_{n-1}(K)$ is, at least in the convex case, usually called \emph{mean breadth} of $K$
\end{remark}

\begin{remark}[(Gau{\ss}-Bonnet Theorem for sets of positive reach)]
	Note that the representation of querma{\ss}integrals of sets bounded by hypersurfaces of positive reach as mean curvature integrals, 
	Lemma \ref{quermassintegralsasmeancurvatureintegrals}, easily gives us a Gau{\ss}-Bonnet Theorem for these surfaces
	\begin{align*}
		\int_{\partial A}K_{G}\dd \sigma=nW_{n}(A)=n\omega_{n}\chi(A),
	\end{align*}
	where $K_{G}$ is the Gau{\ss} curvature. Note that here the dimension $n$ does not have to be odd, as in the generalized Gau{\ss}-Bonnet Theorem. 
\end{remark}

\end{appendix}

\bibliography{}{}
\bibliographystyle{amsalpha}
\bigskip
\noindent
\parbox[t]{.8\textwidth}{
Sebastian Scholtes\\
Institut f{\"u}r Mathematik\\
RWTH Aachen University\\
Templergraben 55\\
D--52062 Aachen, Germany\\
sebastian.scholtes@rwth-aachen.de}

\end{document}